\theoremstyle{plain}
 \newtheorem{theorem}{Theorem}[section]
 \newtheorem*{nonum-theorem}{}
 \newtheorem{lemma}[theorem]{Lemma}
 \newtheorem{corollary}[theorem]{Corollary}
 \newtheorem{proposition}[theorem]{Proposition}
\newcommand{\thistheoremname}{}
\newtheorem*{genericthm*}{\thistheoremname}
\newenvironment{namedthm*}[2]
  {\renewcommand{\thistheoremname}{#1}%
   \begin{genericthm*}%
   \def\@currentlabelname{#1}
   \textrm{$($}#2\textrm{$)$}}%
  {\end{genericthm*}}
\theoremstyle{definition}
 \newtheorem{definition}[theorem]{Definition}
 \newtheorem*{ack}{Acknowledgment}
\theoremstyle{remark}
 \newtheorem{remark}[theorem]{Remark}
\DeclareMathOperator{\mk}{M\MRkern K}
\newcommand{\MRkern}{%
  \mkern-5.8mu
  \mathchoice{}{}{\mkern0.2mu}{\mkern0.5mu}%
}
\newcommand{\scrmk}[1][\sigma]{\mathcal{M\MRkern K}^{#1}}
\def\R{\mathbb{R}}
\def\N{\mathbb{N}}
\renewcommand{\S}{\mathbb{S}}
\def\rad{R}
\DeclareMathOperator{\Id}{Id}
\DeclareMathOperator{\Isom}{Isom}
\DeclareMathOperator{\dist}{\mathrm{d}}
\numberwithin{equation}{section}
   \def\MR#1{}
\begin{document}
\title[]{Disintegrated optimal transport \\for metric fiber bundles}
\author[]{Jun Kitagawa}
\address{Department of Mathematics, Michigan State University, East Lansing, MI 48824, USA}
\email{kitagawa@math.msu.edu}
\author[]{Asuka Takatsu}
\address{Graduate School of Mathematical Sciences, The University of Tokyo, Tokyo {153-8914}, Japan \&
 RIKEN Center for Advanced Intelligence Project (AIP), Tokyo {103-0027}, Japan.}
\email{asuka-takatsu@g.ecc.u-tokyo.ac.jp}
%
\keywords{optimal transport, duality, fiber bundles, disintegration of measures}
\subjclass[2020]{
49Q22, 
30L05, 
28A50} 
\begin{abstract}
We define a new two-parameter family of metrics on subsets of Borel probability measures on general metric fiber bundles, 
called the \emph{disintegrated Monge--Kantorovich metrics}. 
This family contains the classical 
Monge-Kantorovich metrics, linearized optimal transport distance, and fibered Wasserstein distances, and certain cases admit isometric embeddings of the sliced and max-sliced Wasserstein spaces. We prove these metrics are complete, separable (except an endpoint case), and geodesic, with a dual representation. 
 Our results cannot be obtained by applying the theory of $L^q$ maps valued in spaces of probability measures, in fact the $L^q$ map case can be recovered from our results by taking the underlying bundle as a trivial product bundle, and the geodesicness and duality results are new even in the fibered Wasserstein case.\\[-20pt]
\end{abstract}
\maketitle
\section{Introduction}
If $(X,\dist_X)$ is a complete, separable metric space, and $1\leq p<\infty$, we let $\mathcal{P}(X)$ denote the Borel probability measures on~$X$, and $\mathcal{P}_p(X)$ the subset of $\mathcal{P}(X)$ with finite $p$th moment. Then the \emph{$p$-Monge--Kantorovich metric} $\mk_p^{X}$ on $\mathcal{P}_p(X)$ is defined via optimal transport theory. To be precise, for $\mu, \nu\in \mathcal{P}_p(X)$,
\begin{align*}
 \Pi(\mu,\nu)\coloneqq &\{\gamma\in \mathcal{P}(X\times X)\mid \gamma(A\times X)=\mu(A),\ \gamma(X\times A)=\nu(A),\text{ for any Borel } A \subset X\},\\
 \mk_p^X(\mu, \nu)\coloneqq &
 \inf_{\gamma\in \Pi(\mu, \nu)}\left\|\dist_X\right\|_{L^{p}(\gamma)}
 =\inf_{\gamma\in \Pi(\mu, \nu)}\left(\int_{X\times X} \dist_X(x,y)^pd\gamma(x, y)\right)^{\frac1p}.
\end{align*}
By~\cite{Villani09}*{Theorem~4.1}, the infimum above is always attained and a minimizer is called a \emph{$p$-optimal coupling} between $\mu$ and $\nu$. It is also well-known that $(\mathcal{P}_p(X), \mk_p^X)$ is a complete, separable metric space (see~\cite{Villani09}*{Theorem~6.18}). These spaces have rich geometric structure, 
forming the groundwork, for example, of the theory of synthetic Ricci curvature, PDEs on singular spaces, and a wide variety of applications (see, for example, \cite{Villani09}*{Parts II and III},  \cite{Santambrogio15}*{Chapters 4, 7, and 8}, and \cite{Galichon16}). 

In this paper, we will introduce metrics on probability measures on \emph{metric fiber bundles}, which capture transport along individual fibers (hence differ from the classic Monge--Kantorovich metrics). The main motivation for introducing these metrics is to develop a fundamental framework to analyze evolutions which are subject to dynamics that act along fibers, such as certain kinetic equations and heterogeneous gradient flows, on spaces more general than $\R^n$ (such as Riemannian manifolds). An advantage of our framework is the underlying bundle does not need to be a vector bundle, thus one can  consider evolutions driven by dynamics on nonlinear fibers, such as on principle bundles.

We start by giving the definition of metric fiber bundles. For a metric space $(X,\dist_X)$, let $\Isom(X)$ denote the isometry group of $X$.
Recall, an action by a subgroup $G$ of $\Isom (X)$ on $X$ is \emph{effective}
if 
$gx=x$ for all $x\in X$ implies that $g$ is the identity element in $G$. 
\begin{definition}\label{def: metric fiber bundle}
    A \emph{metric fiber bundle} is a triple of metric spaces $(E, \dist_E)$, $(\Omega, \dist_\Omega)$, and $(Y, \dist_Y)$, along with a continuous, surjective map $\pi\colon E\to \Omega$, an open cover $\{U_j\}_{j\in \mathcal{J}}$ of $\Omega$, and corresponding maps $\Xi_j\colon U_j\times Y\to \pi^{-1}(U_j)$ (called \emph{local trivializations}) such that the following properties hold. For each $j\in \mathcal{J}$:
    \begin{itemize}\setlength{\leftskip}{-15pt} 
        \item  $\Xi_j$ is a homeomorphism between $U_j\times Y$ endowed with the product metric, and $\pi^{-1}(U_j)$ with the restriction of $\dist_E$.
        \item 
         $\pi(\Xi_j(\omega, y))=\omega$ for all $(\omega, y)\in U_j\times Y$.
        \item Write $\Xi_{j, \omega}(y)\coloneqq \Xi_j(\omega, y)$ for $\omega\in U_j$. Then for $j'\in \mathcal{J}$ with $U_j\cap U_{j'}\neq \emptyset$, there is a subgroup $G$ of $\Isom(Y)$ acting on $Y$ effectively, and a
        map $g_{j}^{j'}:U_j\cap U_{j'}\to G$ (which is well-defined since $G$ is effective) such that 
\[
\Xi_{j', \omega}^{-1} (\Xi_{j, \omega}(y))=g_{j}^{j'}(\omega) y
\quad \text{for }(\omega,y)\in (U_j\cap U_{j'}) \times Y.
\]
\item For $\omega\in U_j$, the map $\Xi_{j, \omega}\colon Y \to \pi^{-1}(\{\omega\})$ is an isometry.
    \end{itemize}
\end{definition}
Throughout the paper, we fix a metric fiber bundle, denoted by $(E, \Omega, \pi, Y)$, where $(E, \dist_E)$ and $(\Omega, \dist_\Omega)$  are metric spaces, with $E$ complete and separable, and~$\Omega$ complete. Note that $(Y, \dist_Y)$ inherits separability and completeness, while $(\Omega, \dist_\Omega)$ inherits separability from $(E, \dist_E)$.
Furthermore, we make the assumption that
\begin{align}\label{eqn: bounded orbits}
    \text{for each $y\in Y$, the orbit $\{gy\mid g\in G\}$ is a bounded subset of $Y$.}
\end{align}
Trivial bundles, the tangent bundle of any $n$-dimensional Riemannian manifold with the Sasaki metric and $G=O(n)$, and any bundle where $Y$ has bounded diameter or $G$ is compact satisfy~\eqref{eqn: bounded orbits}. 

Next we recall a form of disintegration of measures. For $\mu\in \mathcal{P}(X)$ and a Borel map $T$ from $X$ to a measurable space~$Z$, 
the \emph{pushforward measure} $T_{\sharp} \mu \in \mathcal{P}(Z)$ is defined for a Borel set $A\subset Z$ by 
$
T_{\sharp} \mu (A)\coloneqq \mu(T^{-1}(A))
$.

\begin{namedthm*}{Disintegration Theorem} {\cite{DellacherieMeyer78}*{Chapter III-70 and 72}}
\label{thm: disintegration}
Given $\mathfrak{m}\in \mathcal{P}(E)$,
there exists a map $\mathfrak{m}^{\bullet}\colon\Omega \to \mathcal{P}(E)$, uniquely defined $\pi_\sharp\mathfrak{m}$-a.e., such that 
 if $A\subset E$ is Borel, the real valued function on~$\Omega$ defined by
 $\omega \mapsto \mathfrak{m}^\omega(A)$
 is Borel, and
\begin{align*}
 \mathfrak{m}(A)=\int_\Omega\mathfrak{m}^\omega(A)d\pi_\sharp\mathfrak{m}(\omega),\qquad 
    \mathfrak{m}^\omega(E\setminus \pi^{-1}(\{\omega\}))=0\quad\text{for }\pi_\sharp\mathfrak{m}\text{-a.e. }\omega\in \Omega.
\end{align*}
We refer to this as the \emph{disintegration of $\mathfrak{m}$ with respect to $\pi$} and by an abuse of notation, write  $\mathfrak{m}=\mathfrak{m}^{\bullet}\otimes(\pi_\sharp\mathfrak{m})$. 
\end{namedthm*}
We now fix a Borel probability measure $\sigma$ on $\Omega$, and for $1\leq p<\infty$ define
\begin{align*}
\mathcal{P}^\sigma(E)
&\coloneqq \left\{
\mathfrak{m}\in \mathcal{P}(E)\bigm| \pi_\sharp \mathfrak{m}=\sigma
\right\},\\
    \mathcal{P}^\sigma_p(E)&\coloneqq \{\mathfrak{m}=\mathfrak{m}^\bullet\otimes \sigma\in \mathcal{P}^\sigma(E)\mid \mathfrak{m}^\omega\in \mathcal{P}_p(\pi^{-1}(\{\omega\}))\text{ for $\sigma$-a.e. $\omega$}\}.
    \end{align*}
\begin{definition}
Let $1\leq p <\infty$ and $1\leq q\leq\infty$.
Given $\mathfrak{m}$, $\mathfrak{n}\in\mathcal{P}^\sigma_p(E)$,
we define
\begin{align*}
 \scrmk_{p,q}(\mathfrak{m}, \mathfrak{n})\coloneqq \left\| \mk_p^{E}(\mathfrak{m}^\bullet, \mathfrak{n}^\bullet)\right\|_{L^q(\sigma)},
\end{align*}
and call $\scrmk_{p, q}$ the \emph{disintegrated $(p, q)$-Monge--Kantorovich metric}.
\end{definition}
By \cite{AmbrosioGigliSavare08}*{Lemma 12.4.7}, for $\mathfrak{m}$, $\mathfrak{n}\in \mathcal{P}^\sigma_p(E)$ the function $\omega\mapsto \mk_p^E(\mathfrak{m}^\omega, \mathfrak{n}^\omega)$ is Borel, hence $\scrmk_{p, q}$ is well-defined. So that $\scrmk_{p, q}$ is a true metric, we will restrict to a subset $\mathcal{P}^\sigma_{p,q}(E)\subset \mathcal{P}^\sigma_p(E)$, whose definition is deferred to~\eqref{scr}. We will also have use for an auxiliary function $\dist^p_{E, y_0}\colon \Omega\times E\to [0,\infty)$ for a fixed $y_0\in Y$, whose exact definition is given in \eqref{eqn: dE}. In the special case when $E=\Omega\times Y$ is a trivial bundle, $\mathcal{P}^\sigma_{p,q}(E)$ and $\dist^p_{E, y_0}$ have the simple expressions,
\begin{align*}
    \mathcal{P}^\sigma_{p,q}(E)
    &\coloneqq\{\mathfrak{m}\in \mathcal{P}^\sigma_p(E)\mid \lVert \mk_p^Y( \delta^Y_{y_0},\mathfrak{m}^\bullet)\rVert_{L^q(\sigma)}<\infty\},\\
    \dist^p_{E, y_0}(\omega, u)
&\coloneqq
\dist_E( (\omega,y_0), u )^p
\quad\text{for }(\omega,u)\in \Omega \times E.
\end{align*}

To state our main results, we give the definitions of a few function spaces. 

\begin{definition}\label{C0}
For a locally compact Hausdorff space $X$, a real valued function $\phi$ on $X$ is said to \emph{vanish at infinity} if
$\left\{ x\in X \bigm| \left|\phi(x)\right| \geq \varepsilon \right\}$ 
is compact for any $\varepsilon>0$. 
We let $C_0(X)$ and $C_b(X)$ stand for 
the space of continuous functions on $X$
vanishing at infinity and the space of bounded continuous functions on $X$ respectively, both equipped with the supremum norm.
\end{definition}
We also define
\begin{align*}
 \mathcal{X}_p&\coloneqq\left\{\xi\in C(E)\Bigm| \frac{\xi}{1+\dist^p_{E,y_0}(\pi, \cdot)}\in C_0(E)\right\}\quad
\text{with }\left\| \xi\right\|_{\mathcal{X}_p}\coloneqq\sup_{u\in E}\frac{\left| \xi(u)\right|}{1+\dist^p_{E,y_0}(\pi(u), u)},\\
\mathcal{A}_{p,E,\sigma}
&\coloneqq\left\{
(\Phi,\Psi)\in C_b(E)\times C_b(E)
\Biggm|
-\Phi(u)-\Psi(v) \leq \dist_E(u, v)^p
\text{ for }u, v\in E \text{ with } \pi(u)=\pi(v)
\right\},\\
\mathcal{Z}_{r', \sigma}
&\coloneqq 
\left\{ \zeta\in C_b(\Omega) \bigm| \left\| \zeta\right\|_{L^{r'}(\sigma)}\leq 1,\ \zeta>0
\right\} \text{ for } r'\in [1, \infty].
\end{align*}
Additionally, for $\xi\in \mathcal{X}_p$, define
\begin{align*}
S_p\xi(u)
\coloneqq \sup_{v\in \pi^{-1}(\{\pi(u)\})} \left(-\dist_E(u, v)^p-\xi(v) \right)
\quad
\text{for }u\in E;
\end{align*}
 as a supremum of continuous functions, we see $S_p\xi$ is Borel on $E$ for any $\xi\in \mathcal{X}_p$.

We also recall the following definitions.
\begin{definition}\label{def: geodineq}
Let $(X,\dist_X)$ be a metric space.
A curve $\rho\colon[0,1]\to X$ is a
 \emph{minimal geodesic} if 
\begin{equation*}
\dist_X(\rho(\tau_1), \rho(\tau_2))= |\tau_1-\tau_2|\dist_X(\rho(0),\rho(1))\text{ for any }\tau_1,\ \tau_2\in [0,1].
\end{equation*}
A metric space $(X,\dist_X)$ is \emph{geodesic} 
if any two points in $X$ can be joined by a minimal geodesic.
A geodesic space $(X, \dist_X)$ is \emph{ball convex with respect to a point $x_0\in X$} if for any minimal geodesic $\rho\colon [0, 1]\to X$ and $\tau\in [0, 1]$
    \begin{align*}
        \dist_X(\rho(\tau), x_0)\leq \max\{\dist_X(\rho(0), x_0), \dist_X(\rho(1), x_0)\}.
    \end{align*}
\end{definition}

With the above in hand, our main result is the following.
\begin{theorem}\label{thm: main disint}
    Let $1\leq p<\infty$, $1\leq q\leq \infty$. 
    Let $(E, \Omega, \pi, Y)$ be a metric fiber bundle satisfying~\eqref{eqn: bounded orbits}, with $(E, \dist_E)$ complete and
    separable, and $(\Omega, \dist_\Omega)$ complete, and let $\sigma\in\mathcal{P}(\Omega)$. 
    Then:
    \begin{enumerate}
\setlength{\leftskip}{-15pt}
        \item\label{thm: disint complete} 
   $(\mathcal{P}^\sigma_{p,q}(E), \scrmk_{p, q})$ is a complete metric space. It is also separable when $q<\infty$. 
      \item\label{thm: disint geodesic} If  
      $(Y, \dist_Y)$ is a geodesic space that is ball convex with respect to some point in $Y$,    
then $(\mathcal{P}^\sigma_{p,q}(E), \scrmk_{p, q})$ \emph{is} geodesic.
        \item\label{thm: disint duality} Let  $p\leq q$, set $r\coloneqq q/p$, and 
denote by $r'$ the H\"older conjugate of $r$.
Then if $(Y, \dist_Y)$ is locally compact, for $\mathfrak{m}$, $\mathfrak{n}\in \mathcal{P}^\sigma_p(E)$ we have 
\begin{align*}
\scrmk_{p, q}(\mathfrak{m}, \mathfrak{n})^p\
&=\sup
\left\{
-\int_E
(\zeta\circ \pi)\Phi d\mathfrak{m}
-\int_E(\zeta\circ \pi)\Psi d\mathfrak{n}
\biggm|
(\Phi, \Psi)\in\mathcal{A}_{p,E,\sigma},\ \zeta\in \mathcal{Z}_{r', \sigma}
\right\}.
\end{align*}
If $(E, \dist_E)$ is locally compact, we also have
\begin{align*}
\scrmk_{p, q}(\mathfrak{m}, \mathfrak{n})^p
&=\sup
\left\{
-\int_E (\zeta\circ \pi)
(S_p\Psi) d\mathfrak{m}
-\int_E(\zeta\circ \pi)\Psi d\mathfrak{n}
\biggm|
\Psi\in \mathcal{X}_p\cap C_b(E),\
\zeta\in \mathcal{Z}_{r', \sigma}
\right\}.
\end{align*}
    \end{enumerate}
\end{theorem}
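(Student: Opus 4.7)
The overall strategy is to reduce each assertion to classical results for probability measures on a single fiber $\pi^{-1}(\{\omega\})\cong Y$ and then reassemble via disintegration, the local trivializations $\{\Xi_j\}$, and the partition of unity $\{\chi_j\}$.

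For (1), given a $\scrmk_{p,q}$-Cauchy sequence $\{\mathfrak{m}_n\}$, the functions $\omega\mapsto \mk_p^E(\mathfrak{m}_n^\omega,\mathfrak{m}_m^\omega)$ are Cauchy in $L^q(\sigma)$; passing to a subsequence gives $\sigma$-a.e.\ convergence, so completeness of $(\mathcal{P}_p(Y),\mk_p^Y)$ on each fiber furnishes a limit $\mathfrak{m}^\omega$. Measurability of $\omega\mapsto \mathfrak{m}^\omega$ follows in the spirit of Lemma~\ref{lem: pushforward measurable}, and Fatou combined with the Cauchy property gives both $\scrmk_{p,q}(\mathfrak{m}_n,\mathfrak{m})\to 0$ and $\scrmk_{p,q}(\delta_{E,y_0}^\bullet\otimes\sigma,\mathfrak{m})<\infty$. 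For separability when $q<\infty$, I would build a countable dense family from step functions $\omega\mapsto \mathfrak{m}^\omega$ valued in a countable dense subset of $\mathcal{P}_p(Y)$, exploiting separability of $\Omega$ and transporting through $\{\Xi_j\}$ and $\{\chi_j\}$.

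For (2), ball convexity of $(Y,\dist_Y)$ transfers through each trivialization to yield a geodesic, ball convex structure on the fiberwise Wasserstein space $(\mathcal{P}_p(\pi^{-1}(\{\omega\})),\mk_p^E)$ with respect to an appropriate delta measure. Measurably selecting minimal geodesics $\rho^\omega$ from $\mathfrak{m}^\omega$ to $\mathfrak{n}^\omega$ and setting $\rho(\tau):=\rho^\bullet(\tau)\otimes\sigma$, integration of the fiberwise inequality $\mk_p^E(\rho^\omega(\tau_1),\rho^\omega(\tau_2))\leq|\tau_1-\tau_2|\mk_p^E(\mathfrak{m}^\omega,\mathfrak{n}^\omega)$ in $L^q(\sigma)$ produces~\eqref{geodineq} for $\scrmk_{p,q}$; ball convexity supplies the crucial moment bound keeping $\rho(\tau)$ inside $\mathcal{P}^\sigma_{p,q}(E)$.

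For (3), the easy direction uses disintegration and H\"older:
\begin{align*}
-\int_E(\zeta\circ\pi)\Phi\,d\mathfrak{m}-\int_E(\zeta\circ\pi)\Psi\,d\mathfrak{n}=\int_\Omega\zeta(\omega)\Bigl(-\int_E\Phi\,d\mathfrak{m}^\omega-\int_E\Psi\,d\mathfrak{n}^\omega\Bigr)d\sigma(\omega)\leq \int_\Omega\zeta(\omega)\mk_p^E(\mathfrak{m}^\omega,\mathfrak{n}^\omega)^p\,d\sigma(\omega),
\end{align*}
since admissibility confines $(\Phi,\Psi)$ restricted to $\pi^{-1}(\{\omega\})$ to a valid Kantorovich pair, and H\"older with exponents $(r,r')$ bounds the right side by $\scrmk_{p,q}(\mathfrak{m},\mathfrak{n})^p$; the $S_p\Psi$ version is identical since $(S_p\Psi,\Psi)$ is admissible by the definition in~\eqref{eqn: fiber transform}. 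For the reverse direction, fix $\varepsilon>0$ and use $L^r$--$L^{r'}$ duality to pick a strictly positive $\zeta\in \mathcal{Z}_{r',\sigma}$ making $\int_\Omega\zeta\,\mk_p^E(\mathfrak{m}^\bullet,\mathfrak{n}^\bullet)^p\,d\sigma\geq \scrmk_{p,q}(\mathfrak{m},\mathfrak{n})^p-\varepsilon$; local compactness of $Y$ and the classical Kantorovich duality then supply, on each fiber, near-optimal $c$-concave $C_b$ pairs $(S_p\psi_\omega,\psi_\omega)$. The main obstacle will be upgrading this fiberwise data into globally continuous bounded functions $(\Phi,\Psi)\in\mathcal{A}_{p,E,\sigma}$ on all of $E$: my plan is to use separability of $\mathcal{P}_p(Y)$ to approximate $\omega\mapsto \psi_\omega$ by a simple function valued in $C_b(Y)$, pull it back through each $\Xi_j^{-1}$ on $\pi^{-1}(U_j)$, glue via $\{\chi_j\}$ to a continuous $\Psi\in\mathcal{X}_p\cap C_b(E)$, and restore exact admissibility by a fiberwise outer $c$-transform. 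The second formulation then follows by taking $\Phi=S_p\Psi$, which is Borel and bounded ($-\sup\Psi\leq S_p\Psi\leq-\inf\Psi$), with local compactness of $E$ giving the regularity needed to treat the supremum in~\eqref{eqn: fiber transform}.
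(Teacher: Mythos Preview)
Your outline for (1) matches the paper's argument. For (2) you have the right skeleton, but you misplace the role of ball convexity: the bound $\scrmk_{p,q}(\delta_{E,y_0}^\bullet\otimes\sigma,\rho(\tau))<\infty$ is immediate from the triangle inequality together with the geodesic inequality, no ball convexity needed. In the paper ball convexity enters instead inside the \emph{measurable selection} step: one applies Kuratowski--Ryll-Nardzewski to the set-valued map $\omega\mapsto F(\omega)\subset\mathcal{P}_p(\mathcal{G}(Y))$ of dynamic optimal couplings, and ball convexity is what guarantees that weak limits of dynamic couplings actually converge in $\mk_p^{\mathcal{G}(Y)}$, which is needed to verify $\mathcal{B}_\sigma$-weak measurability. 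Your proposal does not say how you will select $\rho^\omega$ measurably, and that is precisely where the hypothesis is consumed.

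The more substantive gap is in the reverse inequality of (3). The sentence ``use separability of $\mathcal{P}_p(Y)$ to approximate $\omega\mapsto\psi_\omega$ by a simple function valued in $C_b(Y)$'' elides the real work. Before approximating by simple functions you must know that $\omega\mapsto\psi_\omega$ is $\mathcal{B}_\sigma$-measurable into a suitable function space; the paper obtains this, chart by chart, via Kuratowski--Ryll-Nardzewski applied to the set-valued map of near-optimal potentials in $\mathcal{Y}_p$, after checking closed-valuedness and weak measurability (the latter uses separability of $C_0(Y)$, not of $\mathcal{P}_p(Y)$). Then a Lusin--Tietze step upgrades the strongly measurable selection to $C_b(U_j;\mathcal{Y}_p)$, and a nontrivial continuity check is needed after gluing by $\{\chi_j\}$ because the trivializations $\Xi_{j,\omega}$ move with $\omega$. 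Finally, for the second formula you need $\Psi\in\mathcal{X}_p$, not merely $C_b(E)$; the paper multiplies $\Psi$ by a cutoff supported over a compact set in $\Omega$ and then proves, via a sequential compactness argument exploiting local compactness of $E$, that the result lies in $\mathcal{X}_p$. Your last clause does not address this: without the cutoff, the glued $\Psi$ has no reason to satisfy the $\mathcal{X}_p$ decay.
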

\begin{remark}\label{rem: p=q duality}
    It can be seen that when $p=q$ (i.e., $r'=\infty$), the maximum value in Theorem~\ref{thm: main disint}~\eqref{thm: disint duality} is attained by $\zeta\equiv 1$, hence the supremum over $\zeta$ is not actually needed in this case. 
\end{remark}
Note that, unlike the case when $E$ is a trivial bundle, our main results \emph{do not} follow from the analysis of $L^q$ maps into spaces of probability measures. Additionally, the geodesicness and duality results, Theorem~\ref{thm: main disint}~\eqref{thm: disint geodesic} and~\eqref{thm: disint duality} are new even in the case of trivial bundle $E$.

\subsection*{Motivation and  
literature}\label{subsec: motivation}
Our disintegrated Monge--Kantorovich metrics are the first such construction on truly general fiber bundles. If $E=\{\omega_0\}\times Y$ for some one point set $\{\omega_0\}$ and $\sigma=\delta_{\omega_0}$, then $(\mathcal{P}^\sigma_{p, q}(E), \scrmk_{p, q})$ is exactly the space $(\mathcal{P}_p(Y), \mk_p^Y)$.  Another simple subcase is when $E=\Omega\times Y$ is a trivial bundle (i.e., $G$ is the trivial group, and there is only one local trivialization map with a cover of $\Omega$ by only one set). In~\cite{PeszekPoyato23}, the authors introduce the \emph{fibered quadratic Wasserstein distance}, which corresponds to our $\scrmk_{2, 2}$ on the trivial bundle $E=\R^n\times \R^n$. When $E=\Omega\times Y$ is a trivial bundle, it is possible to view $(\mathcal{P}^\sigma_{p, q}(E), \scrmk_{p, q})$ as the metric space valued $L^q$ space on~$(\Omega, \sigma)$ 
where the range is $(\mathcal{P}_p(Y), \mk_p^Y)$ (i.e., elements are of the form $\omega\mapsto \mathfrak{m}^\omega$).
Properties such as completeness for such spaces are claimed in various works, but do not appear to come with proofs in the literature except when the range is a Banach space (i.e., for Bochner--Lebesgue spaces), or when $p=2$ (see~\cite{PeszekPoyato23}*{Appendix~A}). No such identification with a metric space valued $L^q$ space is available when $E$ is a general metric fiber bundle, hence the jump from product structure to general fiber bundle is highly nontrivial, and in particular the methods of \cite{PeszekPoyato23} {cannot} be extended to our general case. However as demonstrated in that paper, already in $\R^n\times \R^n$, there are a multitude of applications to analysis of gradient flows with heterogeneous structure, such as the Kuramoto--Sakaguchi equation and the multi-species Patlak--Keller--Segel model. Our metric will open up the possibility of considering such evolutions on manifolds, or more singular metric spaces. 

We also note that our metrics $\scrmk_{p, q}$ are related to a notion of \emph{measure differential equation} introduced in~\cite{Piccoli19}. There, a notion of flows generated by  probability measure fields (as opposed to vector fields) is introduced and analyzed in a systematic way; among other applications, they are raised as natural candidates for mean-field limits in the setting of multi-particle systems. A quantity $\mathcal{W}(V_1, V_2)$ between probability measures $V_1$ and $V_2$ on the tangent bundle of $\R^n$ is defined in~\cite{Piccoli19}*{Definition 4.1}. It is noted that $\mathcal{W}$ is in general \emph{not} a metric, but in the special case when $V_1$ and $V_2$ have the same marginal when projected onto the base space, $\mathcal{W}$ exactly equals our $\scrmk_{1, 1}$, hence does give a metric. In particular, $\scrmk_{p, q}$ can be used as a pointwise metric between probability measure fields as defined in~\cite{Piccoli19}*{Definition 2.1}, hence could be of use in the analysis of the stability of families of measure differential equations.

We also mention that our family of metrics have potential applications toward the development of a variational framework for spatially inhomogeneous kinetic equations. We are currently working on such a framework starting with the case of the Kolmogorov equation 
\begin{align*}
    \partial_t f(t, x, v)+\langle v, \nabla_x f(t, x, v)\rangle=\Delta_v f(t, x, v),
     \quad  (t,x,v)\in (0,\infty) \times \mathbb{R}^n \times \mathbb{R}^n
\end{align*}
(see, for example, \cite{ImbertSilvestre20}*{Section~2.1}).
One can view the Kolomogorov equation as a PDE on the tangent bundle of $\R^n$ whose key feature is transport on the base space coupled with diffusion in each fiber, such structure is amenable to the geometry induced by our disintegrated metrics, but care must be taken to describe the evolution in the base space. Such a framework also potentially leads to analysis of the analogue of the Kolmogorov equation on manifolds other than Euclidean space; this will be detailed in a forthcoming work.

When $E=\Omega\times \Omega$ where $\Omega\subset \R^n$ is a suitable set, $\sigma\in\mathcal{P}_p(\Omega)$ is absolutely continuous with respect to $n$-dimensional Lebesgue measure, and $\mathfrak{m}$, $\mathfrak{n}$ are $p$-optimal couplings between $\sigma$ and measures $\mu$,  $\nu\in\mathcal{P}_p(\Omega)$ respectively, 
it can be seen that $\scrmk_{p, p}(\mathfrak{m}, \mathfrak{n})$ coincides with (an extension from the case $p=2$ of) the \emph{linearized optimal transport metric} introduced in~\cite{WangSlepcevetal13}*{Section~2.3} between the right marginals of $\mathfrak{m}$ and $\mathfrak{n}$. 
This can be used to obtain properties of the linearized optimal transport metric, for example, Proposition~\ref{prop: optimal closed} below  yields that the linearized optimal transport metric is  complete. This claim is nontrivial, as it shows that the subset of optimal mappings from $\sigma$ is closed in $L^p(\sigma)$. 
We also note there is a somewhat similar notion of \emph{layerwise-Wasserstein distance} introduced in~\cite{KimPassSchneider20}*{Definition~2.2
}.

The disintegrated metrics are also related to the \emph{sliced Wasserstein} and \emph{max-sliced Wasserstein} metrics (see~\cite{sliced-original}*{Section~2.2} and \cite{max-sliced19}*{Definition~2}), which can be isometrically embedded into certain disintegrated Monge--Kantorovich spaces, as we show below in Proposition~\ref{thm: disint embed}. This is of interest if one is interested in applying the gradient flow theory on sliced Monge--Kantorovich spaces, as these are generally \emph{not} geodesic spaces (as shown in~\cite{KitagawaTakatsu24a}*{Main Theorem}) but the disintegrated Monge--Kantorovich spaces are geodesic. Another approach to gradient flows on sliced Monge--Kantorovich spaces has been proposed in~\cite{ParkSlepcev23}*{Section~7}, where the length space structure generated by the sliced metric for $p=q=2$ is considered instead.

The rest of this paper is organized as follows.
We give some preliminary definitions and notation in Section~\ref{sec: preliminaries}, then present the proof of Theorem~\ref{thm: main disint} in Section~\ref{sec: proofs}. We also prove some supplementary results on the disintegrated Monge--Kantorovich metrics that do not directly fall under Theorem~\ref{thm: main disint} in Section~\ref{sec: disint etc}.
\section{Preliminary results}\label{sec: preliminaries}
For the remainder of the paper $(E, \Omega, \pi, Y)$ is a metric fiber bundle with some locally finite open cover $\{U_j\}_{j\in \mathcal{J}}$ of $\Omega$, and associated local trivializations $\{\Xi_j\}_{j\in \mathcal{J}}$. We will assume that $(E, \dist_E)$ is a complete, separable metric space, $(\Omega, \dist_\Omega)$ a complete metric space, and $G$ satisfies assumption~\eqref{eqn: bounded orbits}, with other conditions added as necessary. We also fix $\sigma\in \mathcal{P}(\Omega)$. Throughout this paper, 
we will take $1\leq p<\infty$ and $1\leq q \leq \infty$ unless stated otherwise.

Since $(\Omega, \dist_\Omega)$ is a Lindl\"of space by its separability, and is paracompact since it is metric, there is a countable, locally finite subcover~$\{U_j\}_{j\in \N}$ of $\{U_j\}_{j\in \mathcal{J}}$, with associated local trivializations $\{\Xi_j\}_{j\in \N}$. Additionally, we can find a (continuous) partition of unity $\{\chi_j\}_{j\in \N}$ subordinate to~$\{U_j\}_{j\in \N}$. We will write 
\[
U'_j\coloneqq \{\omega\in \Omega\mid \chi_j(\omega)>0\},
\]
which is a nonempty, open set for each $j\in \N$. Since $\{\chi_j\}_{j\in \N}$ is a partition of unity, we see $\{U'_j\}_{j\in \N}$ is an open cover of $\Omega$, then for later use, we define the cover~$\{V_j\}_{j\in \N}$ of $\Omega$ consisting of mutually disjoint Borel sets by 
\begin{align*}
 V_1\coloneqq U'_1,\quad V_j\coloneqq U'_j\setminus\bigcup_{j'=1}^{j-1}V_{j'},\ j\geq 2;
\end{align*} 
by construction $\chi_j>0$ on $V_j$ and $V_j\subset U_j$ for each $j\in \N$. 

For a metric space $(X, \dist_X)$, we will write $B^X_r(x)$ for the open ball centered at $x\in X$ of radius $r>0$. 
If $\mu$ is any Borel measure on a topological space $X$, we will denote by $\mathcal{B}_\mu$ the completion of the Borel $\sigma$-algebra over $X$ with respect to $\mu$. 
We also denote by ~$\mathds{1}_A$ the characteristic function of a set~$A$, and write $\delta^Y_y$ to denote the delta measure at the point $y$ on a space $Y$.

Next we need some measure theoretical preliminaries.
\begin{definition}
If $X$ is any space, we say a map $f\colon \Omega\to X$ is \emph{simple} if there are finite collections $\{\Omega_i\}_{i=1}^I\subset \mathcal{B}_\sigma$ and $\{x_i\}_{i=1}^I\subset X$, such that the $\Omega_i$ form a partition of $\Omega$ and 
\begin{align*}
    f(\omega)=x_i\quad\text{whenever }\omega\in \Omega_i.
\end{align*}
We will denote such a function by 
\begin{align*}
    f=\sum_{i=1}^I\mathds{1}_{\Omega_i}x_i.
\end{align*}

If $(X, \dist_X)$ is a metric space, a map $f\colon \Omega\to X$ is \emph{$\sigma$-strongly measurable} if there exists a sequence of simple functions that converges $\sigma$-a.e. pointwise to $f$. 
We will write $L^0(\sigma; X)$ for the collection of maps from $\Omega$ to $X$ which are $\sigma$-strongly measurable. 

 Also if $Z$ is any measurable space with a $\sigma$-algebra $\mathcal{F}_Z$, we will say a map $f\colon Z\to X$ is \emph{$\mathcal{F}_Z$-measurable} if $f^{-1}(O)\in \mathcal{F}_Z$ for any open set $O\subset X$. If~$Z$ is equipped with a topology and $\mathcal{F}_Z$ is the Borel $\sigma$-algebra on $Z$, then we simply say $f$ is \emph{Borel}.
\end{definition}
Note the above definitions do not actually require any vector space structure on the range $X$, since the sets $\Omega_i$ in the definition of simple are assumed mutually disjoint.
\begin{remark}\label{V58}
    By~\cite{Varadarajan58}*{Theorem 1}, if $(X, \dist_X)$ is separable, a $\mathcal{B}_\sigma$-measurable map $f\colon \Omega\to X$ is $\sigma$-strongly measurable. In the converse direction, since the inverse image of any set under a simple function is a finite union of elements of $\mathcal{B}_\sigma$, a $\sigma$-strongly measurable map is always $\mathcal{B}_\sigma$-measurable (regardless of separability of the range).

    By~\cite{AmbrosioFuscoPallara00}*{Proposition 2.26} (although this proposition is stated for measures on $\R^n$, it is easy to see the proof holds in general metric spaces), if a map $\mu_\bullet: \Omega\to \mathcal{P}_p(X)$ for some metric space $(X, \dist_X)$ satisfies that $\omega\mapsto \mu_\omega(A)$ is a Borel function for any \emph{open}~$A\subset X$, this property is satisfied for any \emph{Borel}~$A\subset X$. Since each $\mu_\omega$ is a probability measure, it is clearly also equivalent to have the above condition hold for any \emph{closed} $A\subset X$ as well. Then by the proof of~\cite{AmbrosioGigliSavare08}*{Theorem 12.4.7}, $\omega\mapsto \mu_\omega$ is Borel as a map from $(\Omega, \dist_\Omega)$ to $(\mathcal{P}_p(X), \mk_p^X)$. 
Since $(\mathcal{P}_p(X), \mk_p^X)$ is separable, the map is also $\sigma$-strongly measurable. On the other hand, it is easy to see that a $\sigma$-strongly measurable map into $(\mathcal{P}_p(X), \mk_p^X)$ satisfies that $\omega\mapsto \mu_\omega(A)$ is Borel for all open (and closed) sets $A$, thus the above are equivalent characterizations of measurability. 
Additionally, if $\mu_\bullet$ is a map satisfying any of the equivalent characterizations of measurability in the previous paragraph, we can define the function
\begin{align*}
    \mu(A)\coloneqq \int_\Omega \mu_\omega(A)d\sigma(\omega)
\end{align*}
for any Borel $A\subset X$. Then for any disjoint collection $\{A_\ell\}_{\ell\in \N}$ of Borel sets in $X$, we have
\begin{align*}
    \mu\left(\bigcup_{\ell\in \N}A_\ell\right)
    &=\int_\Omega \mu_\omega\left(\bigcup_{\ell\in \N}A_\ell\right)d\sigma(\omega)
    =\int_\Omega \sum_{\ell\in \N}\mu_\omega(A_\ell)d\sigma(\omega)=\sum_{\ell\in \N}\mu(A_\ell)
\end{align*}
by monotone convergence. Clearly $\mu(\emptyset)=0$ and $\mu(X)=1$, with $\mu(A)\geq 0$ for any Borel set $A\subset X$, hence we see $\mu\in\mathcal{P}(X)$. These facts will be used freely throughout the remainder of the paper.
\end{remark}
\begin{remark}\label{rem: MK convex}
Let  $K\in \mathbb{N}$ with $K\geq 2$.
    Note that if $\gamma_k\in \Pi(\mu_k, \nu_k)$ for $1\leq k\leq K$, then 
    \[\sum_{k=1}^K\lambda_k\gamma_k\in \Pi\left(\sum_{k=1}^K\lambda_k\mu_k, \sum_{k=1}^K\lambda_{k}\nu_{k}\right)
    \quad 
    \text{for }\sum_{k=1}^K\lambda_k=1 \text{  with }\lambda_k\geq 0.
    \]
    Thus for any metric space $(X, \dist_X)$ and $1\leq p<\infty$, we have
    \begin{align*}
        \mk_p^X\left(\sum_{k=1}^K\lambda_k\mu_k, \sum_{k=1}^K\lambda_k\nu_k\right)^p
        \leq \sum_{k=1}^K\lambda_k\mk_p^X(\mu_k, \nu_k)^p.
    \end{align*}
    Also since each map $\Xi_{j, \omega}$ is an isometry between $Y$ and $\pi^{-1}(\{\omega\})$, for any $1\leq p<\infty$ and measures $\mu$, $\nu\in \mathcal{P}_p(Y)$, we have
    \begin{align*}
        \mk_p^Y(\mu, \nu)=\mk_p^E((\Xi_{j, \omega})_\sharp \mu, (\Xi_{j, \omega})_\sharp \nu)
        \quad \text{for }\omega \in \Omega.
    \end{align*}
    We will freely use these properties in the sequel.
\end{remark}
Now recall that for $\mathfrak{m}\in \mathcal{P}^{\sigma}(E)$,
we write
$
\mathfrak{m}=\mathfrak{m}^{\bullet} \otimes \sigma
$
where $\mathfrak{m}^\omega\in \mathcal{P}(\pi^{-1}(\{\omega\}))$ for each $\omega\in \Omega$, following from \nameref{thm: disintegration}. We will fix some $y_0\in Y$ and for ease of notation, write 
\begin{align*}
\dist_{y_0}(t)\coloneqq \dist_Y(y_0, t)\quad\text{for }t\in Y.
\end{align*}
Then for any Borel $A\subset E$, define
\begin{align}\label{eqn: delta construction}
     (\delta^\bullet_{E, y_0}\otimes \sigma)(A)\coloneqq \sum_{j\in \N}\int_{\Omega}\chi_j(\omega) (\Xi_{j, \omega})_\sharp\delta^Y_{y_0}(A)d\sigma(\omega).
 \end{align}
 If we define $\delta^\omega_{E, y_0}\in \mathcal{P}(E)$ by 
\begin{align*}
\delta^\omega_{E,y_0}\coloneqq \sum_{j\in \N}\chi_j(\omega) (\Xi_{j, \omega})_\sharp\delta^Y_{y_0},
\end{align*}
the following Lemma~\ref{lem: pushforward measurable} implies~\eqref{eqn: delta construction} is an element of $\mathcal{P}^\sigma_p(E)$ whose disintegration with respect to~$\pi$ is actually given by $\delta^\bullet_{E, y_0}\otimes \sigma$.
\begin{lemma}\label{lem: pushforward measurable}
If $\mu\in \mathcal{P}_p(Y)$ for some $1\leq p<\infty$, the functions on the Borel sets of $E$ defined by
\begin{align}
\label{eqn: disjoint pushforward disintegration def}
            A&\mapsto \sum_{j\in \N}\int_{\Omega}\mathds{1}_{V_j}(\omega)(\Xi_{j, \omega})_\sharp\mu(A)d\sigma(\omega),\\
        \label{eqn: pushforward disintegration def}
            A&\mapsto \sum_{j\in \N}\int_{\Omega}\chi_j(\omega)(\Xi_{j, \omega})_\sharp\mu(A)d\sigma(\omega),
        \end{align}
        are elements of $\mathcal{P}^\sigma_{p, q}(E)$ for any $1\leq q\leq \infty$, with disintegrations 
\[
\sum_{j\in \N}\mathds{1}_{V_j}(\Xi_{j, \bullet})_\sharp\mu\otimes \sigma \quad \text{and}\quad
\sum_{j\in \N}\chi_j(\Xi_{j, \bullet})_\sharp\mu\otimes \sigma
\] 
respectively, with respect to $\pi$.
\end{lemma}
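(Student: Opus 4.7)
The plan is to handle both formulas \eqref{eqn: disjoint pushforward disintegration def} and \eqref{eqn: pushforward disintegration def} in parallel, proceeding through measurability, verification of the probability measure axioms, identification of the pushforward and disintegration, and finally an $L^q$ bound. The two constructions differ only in whether the weights are $\mathds{1}_{V_j}$ or $\chi_j$, and since $\sum_{j\in \N}\mathds{1}_{V_j}\equiv 1$ on $\Omega$ and $\sum_{j\in \N}\chi_j\equiv 1$ on $\Omega$, most computations are identical.

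The first step is to justify that the integrands in \eqref{eqn: disjoint pushforward disintegration def} and \eqref{eqn: pushforward disintegration def} are measurable in $\omega$. Since $\Xi_j$ is continuous on $U_j\times Y$, for any $\phi\in C_b(E)$ the map $\omega\mapsto \int_Y\phi(\Xi_{j,\omega}(y))d\mu(y)$ is continuous on $U_j$ by dominated convergence, so $\omega\mapsto (\Xi_{j,\omega})_\sharp\mu$ is weakly continuous on $U_j$. Combined with the Portmanteau theorem and a monotone class argument (cf. the discussion in Remark~\ref{V58}), $\omega\mapsto (\Xi_{j,\omega})_\sharp\mu(A)$ is Borel on $U_j$ for every Borel $A\subset E$, and extends by zero outside $U_j$ when multiplied by $\mathds{1}_{V_j}$ or $\chi_j$. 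Thus the integrals in \eqref{eqn: disjoint pushforward disintegration def} and \eqref{eqn: pushforward disintegration def} are well-defined.

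Next, I would denote the resulting set functions by $\mathfrak{m}$ in each case, and verify they are Borel probability measures on $E$. Nonnegativity is immediate, countable additivity follows from a monotone convergence exchange of sum and integral (as in Remark~\ref{V58}), and the total mass is $\int_\Omega \sum_{j\in \N}\mathds{1}_{V_j}(\omega)\,d\sigma=1$ (resp. the same with $\chi_j$) because each $(\Xi_{j,\omega})_\sharp\mu$ is a probability measure. For the pushforward under $\pi$, for any Borel $A\subset\Omega$ we use that $(\Xi_{j,\omega})_\sharp\mu$ is supported in $\pi^{-1}(\{\omega\})$ to obtain $(\Xi_{j,\omega})_\sharp\mu(\pi^{-1}(A))=\mathds{1}_A(\omega)$, whence $\pi_\sharp\mathfrak{m}(A)=\sigma(A)$ in both cases. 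The two candidate disintegrations $\omega\mapsto \sum_{j\in \N}\mathds{1}_{V_j}(\omega)(\Xi_{j,\omega})_\sharp\mu$ and $\omega\mapsto\sum_{j\in \N}\chi_j(\omega)(\Xi_{j,\omega})_\sharp\mu$ are probability measures on $E$ supported in $\pi^{-1}(\{\omega\})$ and, by the Fubini-type computation just performed, satisfy the defining identity of \nameref{thm: disintegration}; the uniqueness clause in that theorem then identifies them as the disintegrations.

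The last step is to show $\scrmk_{p,q}(\delta^\bullet_{E,y_0}\otimes\sigma,\mathfrak{m})<\infty$. For the $\chi_j$ version, both $\delta^\omega_{E,y_0}$ and $\mathfrak{m}^\omega$ are convex combinations indexed by the same weights $\chi_j(\omega)$, so by the convexity property from Remark~\ref{rem: MK convex} and the fact that each $\Xi_{j,\omega}$ is an isometry,
\[
\mk_p^E(\delta^\omega_{E,y_0},\mathfrak{m}^\omega)^p
\leq \sum_{j\in \N}\chi_j(\omega)\mk_p^E\bigl((\Xi_{j,\omega})_\sharp\delta^Y_{y_0},(\Xi_{j,\omega})_\sharp\mu\bigr)^p
=\mk_p^Y(\delta^Y_{y_0},\mu)^p,
\]
which is a finite constant since $\mu\in\mathcal{P}_p(Y)$. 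For the $\mathds{1}_{V_j}$ version, let $j(\omega)$ be the unique index with $\omega\in V_{j(\omega)}$. Splitting $\delta^\omega_{E,y_0}$ as a $\chi_k(\omega)$-convex combination, applying Remark~\ref{rem: MK convex} again, and rewriting $\Xi_{k,\omega}(y_0)=\Xi_{j(\omega),\omega}(g^{j(\omega)}_k(\omega)y_0)$ via the transition map, the isometry property and the triangle inequality in $\mathcal{P}_p(Y)$ yield
\[
\mk_p^E(\delta^\omega_{E,y_0},\mathfrak{m}^\omega)
\leq \sup_{g\in G}\dist_Y(gy_0,y_0)+\mk_p^Y(\delta^Y_{y_0},\mu),
\]
which is finite by the bounded-orbit assumption~\eqref{eqn: bounded orbits}. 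In both cases the bound is independent of $\omega$, hence lies in $L^q(\sigma)$ for every $1\leq q\leq\infty$, so $\mathfrak{m}\in\mathcal{P}^\sigma_{p,q}(E)$. The only delicate step is the first, setting up measurability so that the integrals make sense; once that is in place, the remaining verifications reduce to the isometry of the trivializations, the bounded orbit hypothesis, and the uniqueness clause of the disintegration theorem.
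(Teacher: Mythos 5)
Your proof is correct and follows essentially the same route as the paper: establish measurability of $\omega\mapsto(\Xi_{j,\omega})_\sharp\mu(A)$, invoke Remark~\ref{V58} to recognize the integral formulas as probability measures with the stated fiberwise disintegration, and then bound $\mk_p^E(\delta^\omega_{E,y_0},\mathfrak{m}^\omega)$ uniformly in $\omega$. The only substantive (and entirely legitimate) differences are presentational and cosmetic. For measurability you pass through weak continuity of $\omega\mapsto(\Xi_{j,\omega})_\sharp\mu$ tested against $C_b(E)$ plus Portmanteau, where the paper uses Fatou's lemma on $\mathds{1}_A\circ\Xi_{j,\omega}$ directly for open $A$; both land in the same place via the open-sets-suffice characterization of measurability. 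For the $\chi_j$ case your convexity estimate is actually slightly cleaner than the paper's: since the same weights $\chi_j(\omega)$ appear on both sides, a single diagonal application of Remark~\ref{rem: MK convex} gives $\mk_p^E(\delta^\omega_{E,y_0},\mathfrak{m}^\omega)^p\leq\mk_p^Y(\delta^Y_{y_0},\mu)^p$ without invoking the bounded-orbits hypothesis at all, whereas the paper matches the $\chi_j$/$\mathds{1}_{V_{j'}}$ sums via a double index and a $2^{p-1}$ triangle-inequality split even in the $\chi_j$ case (the "analogous proof"), and only your $\mathds{1}_{V_j}$ case genuinely uses~\eqref{eqn: bounded orbits}. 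You also fold the intermediate step $\mathfrak{m}\in\mathcal{P}^\sigma_p(E)$ (which the paper verifies separately by computing a fiberwise $p$th moment) into the uniform bound on $\mk_p^E(\delta^\omega_{E,y_0},\mathfrak{m}^\omega)$, which is valid since $\delta^\omega_{E,y_0}$ has bounded support. No gaps.
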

\begin{proof}
Fix any $\mu\in \mathcal{P}_p(Y)$ and open set $A\subset E$. Then by Fatou's lemma the function 
\begin{align*}
\omega\mapsto\int_Y\mathds{1}_A(\Xi_{j,\omega}(t))d\mu(t)
\end{align*} 
is lower semi-continuous, in particular Borel, on $U_j$ for any $j\in \N$. Thus we immediately see 
\[
\omega\mapsto 
\sum_{j\in \N}\mathds{1}_{V_j}(\omega)(\Xi_{j, \omega})_\sharp \mu(A)
= \sum_{j\in \N} \mathds{1}_{V_j}(\omega) \int_{Y} \mathds{1}_A(\Xi_{j,\omega}(t)) d\mu(t)
\]
 is Borel for any open set $A\subset E$, hence for any Borel set. Thus~\eqref{eqn: disjoint pushforward disintegration def} is well-defined for any Borel $A\subset E$,
and by Remark~\ref{V58},
\[
\mathfrak{m}=\sum_{j\in \N}\mathds{1}_{V_j}(\Xi_{j, \bullet})_\sharp\mu\otimes \sigma
\]
is a nonnegative probability measure, which we easily see belongs to $\mathcal{P}^\sigma(E)$. 
Also, if $\omega\in \Omega$ and $u_0\in \pi^{-1}(\{\omega\})$ are fixed,
\begin{align*}
\int_E \dist_E (u_0,u)^p d\mathfrak{m}(u)
&=\sum_{j\in \N}\mathds{1}_{V_j}(\omega)\int_E \dist_E(u_0, u)^p d (\Xi_{j, \omega})_\sharp\mu(u)\\
   &=\sum_{j\in \N}\mathds{1}_{V_j}(\omega)\int_Y \dist_E(u_0, \Xi_{j, \omega}(t))^p d \mu(t)
   =\sum_{j\in \N}\mathds{1}_{V_j}(\omega)\int_Y \dist_Y(\Xi^{-1}_{j,\omega}(u_0), t)^p d \mu(t)\\
   &<\infty,
\end{align*}
where the finiteness follows since $\mu\in \mathcal{P}_p(Y)$, and the sum above is finite from disjointness of the sets $V_j$, thus $\mathfrak{m}\in \mathcal{P}^\sigma_p(E)$. 
The same proof holds replacing each $\mathds{1}_{V_j}$ with $\chi_j$, the local finiteness taking the place of disjointness of the sets $V_j$, hence the expression in~\eqref{eqn: pushforward disintegration def} also defines an element of $\mathcal{P}^\sigma_p(E)$; in particular, taking $\mu=\delta^Y_{y_0}$ we also see $\delta^\bullet_{E, y_0}\otimes\sigma$ defined by~\eqref{eqn: delta construction} belongs to ~$\mathcal{P}^\sigma_p(E)$.

Next, fix $\omega\in \Omega$, then using the local finiteness property of the partition of unity $\{\chi_j\}_{j\in \N}$ and recalling Remark~\ref{rem: MK convex}, we have
\begin{align*}
\begin{split}
    \mk_p^E(\delta^{\omega}_{E,y_0}, \mathfrak{m}^\omega)^p
    &=\mk_p^E\left(\sum_{j\in \N}\chi_{j}(\omega)(\Xi_{j, \omega})_\sharp\delta^Y_{y_0}, \sum_{j'\in \N}\mathds{1}_{V_{j'}}(\omega)(\Xi_{j', \omega})_\sharp\mu\right)^p\\
    &\leq \sum_{j, j'\in \N}\chi_{j}(\omega)\mathds{1}_{V_{j'}}(\omega)\mk_p^E((\Xi_{j, \omega})_\sharp\delta^Y_{y_0}, (\Xi_{j', \omega})_\sharp\mu)^p\\
    &=\sum_{j, j'\in \N}\chi_{j}(\omega)\mathds{1}_{V_{j'}}(\omega)\mk_p^Y (g^{j'}_j(\omega)_ \sharp\delta^Y_{y_0}, \mu)^p\\
    &\leq2^{p-1}\sum_{j, j'\in \N}\chi_{j}(\omega)\mathds{1}_{V_{j'}}(\omega)\left(\mk_p^Y(\delta^Y_{y_0}, \mu)^p
    +\mk_p^Y(\delta^Y_{y_0}, g^{j'}_j(\omega)_\sharp\delta^Y_{y_0})^p\right)\\
    &=2^{p-1}\sum_{j, j'\in \N}\chi_{j}(\omega)\mathds{1}_{V_{j'}}(\omega)\left(\mk_p^Y(\delta^Y_{y_0}, \mu)^p
    +\dist_Y(y_0, g^{j'}_{j}(\omega) y_0)^p\right),
\end{split}
\end{align*}
which is bounded independent of $\omega\in \Omega$ since $\mu\in \mathcal{P}_p(Y)$ and by~\eqref{eqn: bounded orbits}.  
Thus $\mathfrak{m}\in \mathcal{P}^\sigma_{p, q}(E)$; an analogous proof applies for~\eqref{eqn: pushforward disintegration def}  and the lemma is proved.
\end{proof}

We also define the function on $\Omega\times E$ by
\begin{align}\label{eqn: dE}
\dist_{E,y_0}^p(\omega, u)
\coloneqq
\sum_{j\in \N}\chi_j(\omega) \dist_E(\Xi_{j, \omega}(y_0),u)^p
\quad \text{for }(\omega, u)\in \Omega\times E,
\end{align}
and the space
\begin{align}\label{scr}
\mathcal{P}^\sigma_{p,q}(E)\coloneqq \left\{ \mathfrak{m}\in \mathcal{P}^\sigma_p(E) \Biggm|
\scrmk_{p, q}(\delta^\bullet_{E, y_0}\otimes \sigma,\mathfrak{m})<\infty
\right\},
\end{align}
this will be the space on which we consider $\scrmk_{p, q}$. 
To finish this section, we show that the definition of $\mathcal{P}^\sigma_{p, q}(E)$ does not depend on the choices of cover, local trivializations, partition of unity, nor choice of fixed point in $Y$.
\begin{lemma}\label{lem: P_pq well defined}
    Let $(E, \Omega, \pi, Y)$ be a metric fiber bundle with  open cover $\{U_j\}_{j\in \mathcal{J}}$ of $\Omega$ and associated local trivializations $\{\Xi_j\}_{j\in \mathcal{J}}$. Then, the definition of $\mathcal{P}^\sigma_{p, q}(E)$ is independent of the choices of subcover $\{U_j\}_{j\in \N}$, $\{\Xi_j\}_{j\in \N}$, partition of unity $\{\chi_j\}_{j\in \N}$, and $y_0$.
\end{lemma}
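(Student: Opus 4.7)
The plan is to reduce the claim to showing that for any two admissible choices of subcover, associated local trivializations, subordinate partition of unity, and basepoint, the two resulting reference measures $\delta^\bullet_{E,y_0}\otimes\sigma$ and $\widetilde{\delta}^\bullet_{E,\widetilde{y}_0}\otimes\sigma$ are at finite $\scrmk_{p,q}$-distance. Once this is established, Minkowski's inequality on $L^q(\sigma)$ combined with the triangle inequality for $\mk_p^E$ on each fiber $\pi^{-1}(\{\omega\})$ gives
\[
\scrmk_{p,q}(\delta^\bullet_{E,y_0}\otimes\sigma,\mathfrak{m})
\leq \scrmk_{p,q}(\delta^\bullet_{E,y_0}\otimes\sigma,\widetilde{\delta}^\bullet_{E,\widetilde{y}_0}\otimes\sigma)+\scrmk_{p,q}(\widetilde{\delta}^\bullet_{E,\widetilde{y}_0}\otimes\sigma,\mathfrak{m}),
\]
and symmetrically, so finiteness of either side is equivalent, and the set defined by~\eqref{scr} is independent of all four choices.

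For the uniform estimate, I would fix $\omega\in \Omega$ and apply the convexity bound from Remark~\ref{rem: MK convex} to the representation~\eqref{eqn: dy0} to obtain
\begin{align*}
\mk_p^E(\delta^\omega_{E,y_0},\widetilde{\delta}^\omega_{E,\widetilde{y}_0})^p
&\leq \sum_{j,k\in\N}\chi_j(\omega)\widetilde{\chi}_k(\omega)\mk_p^E\bigl((\Xi_{j,\omega})_\sharp\delta^Y_{y_0},(\widetilde{\Xi}_{k,\omega})_\sharp\delta^Y_{\widetilde{y}_0}\bigr)^p\\
&=\sum_{j,k\in\N}\chi_j(\omega)\widetilde{\chi}_k(\omega)\dist_E\bigl(\Xi_{j,\omega}(y_0),\widetilde{\Xi}_{k,\omega}(\widetilde{y}_0)\bigr)^p.
\end{align*}
Because both subcovers come from the common atlas $\{U_j\}_{j\in \mathcal{J}}$ in Definition~\ref{def: metric fiber bundle}, whenever $\omega\in U_j\cap \widetilde{U}_k$ the composition $\widetilde{\Xi}_{k,\omega}^{-1}\circ\Xi_{j,\omega}\colon Y\to Y$ is implemented by the action of some element $g_{j,k}(\omega)\in G$. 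Using that $\widetilde{\Xi}_{k,\omega}$ is an isometry and then the triangle inequality in $Y$,
\[
\dist_E\bigl(\Xi_{j,\omega}(y_0),\widetilde{\Xi}_{k,\omega}(\widetilde{y}_0)\bigr)=\dist_Y\bigl(g_{j,k}(\omega)y_0,\widetilde{y}_0\bigr)\leq \dist_Y\bigl(y_0,g_{j,k}(\omega)y_0\bigr)+\dist_Y(y_0,\widetilde{y}_0),
\]
and the first summand is bounded by the diameter of the orbit $\{gy_0\mid g\in G\}$, which is finite by the assumption~\eqref{eqn: bounded orbits}. Since the resulting bound is independent of $j$, $k$, and $\omega$, and $\sum_{j,k}\chi_j(\omega)\widetilde{\chi}_k(\omega)=1$, one gets $\mk_p^E(\delta^\omega_{E,y_0},\widetilde{\delta}^\omega_{E,\widetilde{y}_0})\leq C$ uniformly in $\omega$; as $\sigma(\Omega)=1$, this uniform bound immediately controls every $L^q(\sigma)$ norm, $1\leq q\leq\infty$.

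I do not anticipate a substantive obstacle beyond this bookkeeping. The measurability of $\omega\mapsto\mk_p^E(\delta^\omega_{E,y_0},\widetilde{\delta}^\omega_{E,\widetilde{y}_0})$ needed to make sense of the $L^q(\sigma)$ norm is already available: Lemma~\ref{lem: pushforward measurable} identifies the disintegrations of $\delta^\bullet_{E,y_0}\otimes\sigma$ and $\widetilde{\delta}^\bullet_{E,\widetilde{y}_0}\otimes\sigma$ and places them in $\mathcal{P}^\sigma_p(E)$, after which \cite{AmbrosioGigliSavare08}*{Lemma 12.4.7} (cited immediately after~\eqref{scrdist}) yields Borel measurability of the fiberwise Wasserstein distance. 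In essence, the content of the lemma is that the bounded-orbits hypothesis~\eqref{eqn: bounded orbits} was inserted precisely so that switching basepoints, trivializations, and partition of unity only moves the reference measure by a uniformly bounded fiberwise transport, which is absorbed into any $L^q(\sigma)$ norm.
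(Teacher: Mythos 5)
Your proposal is correct and follows essentially the same route as the paper: reduce to showing the two reference measures $\delta^\bullet_{E,y_0}\otimes\sigma$ and $\widetilde{\delta}^\bullet_{E,\widetilde{y}_0}\otimes\sigma$ are at finite $\scrmk_{p,q}$-distance via the triangle inequality from Theorem~\ref{thm: main disint}~\eqref{thm: disint complete}, then bound $\mk_p^E(\delta^\omega_{E,y_0},\widetilde{\delta}^\omega_{E,\widetilde{y}_0})^p$ fiberwise by convexity (Remark~\ref{rem: MK convex}), identify the resulting distance as $\dist_Y(g_{j,k}(\omega)y_0,\widetilde{y}_0)$ through the transition functions, and invoke the bounded-orbits assumption~\eqref{eqn: bounded orbits} to get a uniform-in-$\omega$ bound absorbed by every $L^q(\sigma)$ norm.
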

\begin{proof}
To see this, suppose $\{\widetilde{U}_j\}_{j\in \N}$, $\{\widetilde{\Xi}_j\}_{j\in \N}$, $\{\tilde{\chi}_j\}_{j\in \N}$ are another choice of open subcover, associated local trivializations, and partition of unity, take some other point $\tilde{y}_0\in Y$, and let $\delta^\bullet_{E, \tilde{y}_0}\otimes \sigma$ denote the construction~\eqref{eqn: delta construction} made with these choices. 
Then, for each $\omega \in U_j\cap U_{j'}$ with $j,j'\in\mathbb{N}$,
there exists $\gamma_j^{j'}(\omega)\in G$ such that 
$\widetilde{\Xi}_{j',\omega}^{-1}(\Xi_{j,\omega}(y))=\gamma_j^{j'}(\omega) y$
for $y\in Y$.
By the triangle inequality from Theorem~\ref{thm: main disint}~\eqref{thm: disint complete} below, (which does not rely on this lemma) it is sufficient to show $\scrmk_{p, q}(\delta^\bullet_{E, y_0}\otimes \sigma, \delta^\bullet_{E, \tilde{y}_0}\otimes \sigma)<\infty$. To this end, fix $\omega\in \Omega$,
then
\begin{align*}
    \mk_p^E(\delta^\omega_{E, y_0}, \delta^\omega_{E, \tilde{y}_0})^p
    &=\mk_p^E\left(
    \sum_{j\in \N}\chi_j(\omega) (\Xi_{j, \omega})_\sharp\delta^Y_{y_0}, \sum_{j'\in \N}\tilde{\chi}_{j'}(\omega) (\widetilde{\Xi}_{j', \omega})_\sharp\delta^Y_{\tilde{y}_0}\right)^p\\
    &\leq \sum_{j, j'\in \N}\chi_j(\omega)\tilde{\chi}_{j'}(\omega)\mk_p^E\left( (\Xi_{j, \omega})_\sharp\delta^Y_{y_0},  (\widetilde{\Xi}_{j', \omega})_\sharp\delta^Y_{\tilde{y}_0}\right)^p\\
    &=\sum_{j, j'\in \N}\chi_j(\omega)\tilde{\chi}_{j'}(\omega)\dist_Y(
    \gamma_j^{j'}(\omega)y_0, \tilde{y}_0)^p,
\end{align*}
which is bounded independent of $\omega\in \Omega$ due to assumption~\eqref{eqn: bounded orbits} and since $\{\chi_j\}_{j\in \N}$ is a partition of unity.
 Thus we see that $\mathcal{P}^\sigma_{p,q}(E)$ is well-defined.
\end{proof}

 \section{Proof of Theorem~\ref{thm: main disint}}\label{sec: proofs}

We give the proof of Theorem~\ref{thm: main disint}, divided into subsections.
Before embarking on the proof of Theorem~\ref{thm: main disint}, we recall some  characterizations of convergence with respect to $\mk_p^X$. 
\begin{theorem}[\cite{Villani09}*{Theorem~6.9}]\label{thm: wassconv}
Let $(X,\dist_X)$ be a complete, separable metric space and $1\leq p<\infty$. Then for a sequence $(\mu_\ell)_{\ell\in \mathbb{N}}$ in $\mathcal{P}_p(X)$
and $\mu \in \mathcal{P}_p(X)$, 
the following four conditions are equivalent.
\begin{itemize}
\setlength{\leftskip}{-15pt}
\item
$\displaystyle \lim_{\ell\to \infty} \mk_p^{X}(\mu_\ell,\mu)=0$.
\item
$(\mu_\ell)_{\ell\in \mathbb{N}}$ converges weakly to $\mu$ 
and 
\[
\lim_{\ell\to \infty} 
\int_{X} \dist_X(x_0, x)^p d\mu_\ell(x)=
\int_{X} \dist_X(x_0, x)^p d\mu(x)
\]
holds for some (hence all) $x_0\in X$.
\item
$(\mu_\ell)_{\ell\in \mathbb{N}}$ converges weakly to $\mu$ 
and 
\[
\lim_{r\to\infty}\varlimsup_{\ell\to \infty} 
\int_{X\setminus B_r^X(x_0)} \dist_X(x_0, x)^p d\mu_\ell(x)=0.
\]
\item
For any $\phi\in C(X)$ with 
$|\phi|\leq C(1+\dist_X(x_0,\cdot)^p)$ 
for some $C\in \mathbb{R}$ and $x_0\in X$, 
\[
\lim_{\ell\to \infty} 
\int_{X} \phi(x) d\mu_\ell(x)=
\int_{X} \phi(x) d\mu(x).
\]
\end{itemize}
\end{theorem}
\subsection{Complete, separable, metric}
We first prove that $(\mathcal{P}^\sigma_{p, q}(E), \scrmk_{p, q})$ is a complete metric space, and separable when $q<\infty$. It is easy to show $\scrmk_{p, q}$ is a metric, however completeness and separability will be more involved proofs, as there is no direct comparison between $\scrmk_{p, q}$ and the usual Monge--Kantorovich metrics (however, note Proposition~\ref{OT} below). Additionally, since our setting is on fiber bundles, $(\mathcal{P}^\sigma_{p, q}(E), \scrmk_{p, q})$ can not be identified with a metric space valued $L^q$ space, hence we must take a completely different approach.

Our proof of separability when $q<\infty$, 
is inspired by the arguments in~\cite{Varadarajan58}*{Theorem 1} and~\cite{HytonenvanNeervenVeraarWeis16}*{Remark 1.2.20}. 
\begin{remark}
We note that 
$\mathcal{P}^\sigma_{p, \infty}(E)$ is \emph{not} separable with respect to $\scrmk_{p, \infty}$ for any~$p$ if $Y$ is not a single point and $\sigma$ is such that there exists an uncountable family $\{\Omega_a\}_{a\in A}\subset \Omega$ of Borel sets in $\Omega$ so that $\sigma(\Omega_{a_1}\setminus \Omega_{a_2})>0$ for all distinct $a_1, a_2\in A$. Indeed, fix two distinct points $y_1$, $y_2\in Y$ and let 
\begin{align*}
\mathfrak{m}_a\coloneqq \left(\sum_{j\in \N}\mathds{1}_{V_j}(\mathds{1}_{\Omega_a}(\Xi_{j,\bullet})_\sharp\delta_{y_1}^Y+\mathds{1}_{\Omega\setminus \Omega_a}(\Xi_{j,\bullet})_\sharp\delta_{y_2}^Y)\right)\otimes \sigma
\quad 
\text{for }a\in A.  
\end{align*}
Then $\{\mathfrak{m}_a\}_{a\in A}$ is uncountable but 
\begin{align*}
\scrmk_{p, \infty}(\mathfrak{m}_{a_1}, \mathfrak{m}_{a_2})\geq\dist_Y(y_1, y_2)>0    
\end{align*}
 whenever $a_1\neq a_2$. As an example, if $E$ is a metric bundle whose base space~$\Omega$ is a Riemannian manifold and $\sigma$ is absolutely continuous with respect to the Riemannian volume, then for the sets $\Omega_a$ one can take geodesic balls of sufficiently small radius, centered at an uncountable collection of points.
\end{remark}
\begin{remark}
    As a consequence of the triangle inequality for $\scrmk_{p, q}$ below, we see if $\mathfrak{m}$, $\mathfrak{n}\in 
\mathcal{P}^\sigma_{p,1}(E)$, 
we have
\[
\mk_p^E(\mathfrak{m}^\omega, \mathfrak{n}^\omega) \in [0,\infty)
\quad
\text{for $\sigma$-a.e. $\omega$.}
\] 
Also a simple application of H\"{o}lder's inequality shows that
\[
\scrmk_{p, q}
\leq
\scrmk_{p', q'}, \
\mathcal{P}^\sigma_{p,q}(E)
\subset
\mathcal{P}^\sigma_{p',q'}(E)
\quad
\text{for }
p\leq p', q\leq q'.
\]
\end{remark}
We are now ready to prove the claims in Theorem~\ref{thm: main disint}~\eqref{thm: disint complete}.
\begin{proof}[Proof of Theorem~\ref{thm: main disint}~\eqref{thm: disint complete}]

\noindent
\textbf{(Metric):} 
Let $\mathfrak{m}$, $\mathfrak{n} \in \mathcal{P}^\sigma_{p, q}(E)$.
From the definition, it is immediate that 
\begin{align*}
\scrmk_{p, q}(\mathfrak{n}, \mathfrak{m})
=\scrmk_{p, q}(\mathfrak{m}, \mathfrak{n})
=\left\|\mk_p^E(\mathfrak{m}^{\bullet},\mathfrak{n}^{\bullet})\right\|_{L^q(\sigma)}
\geq 0,
\end{align*}
and equality holds if and only if 
$\mk_p^E(\mathfrak{m}^\bullet,\mathfrak{n}^\bullet)=0$, $\sigma$-a.e. 
Since $\mk_p^E$ is a metric when restricted to~$\mathcal{P}_p(\pi^{-1}(\{\omega\}))$ for each $\omega\in \Omega$, we see 
$\scrmk_{p, q}(\mathfrak{m}, \mathfrak{n})=0$ if and only if 
$\mathfrak{m}^\omega=\mathfrak{n}^\omega$ for $\sigma$-a.e. $\omega$,
that is, $\mathfrak{m}=\mathfrak{n}$ by~\nameref{thm: disintegration}.
Using the triangle inequality for $\mk_p^E$ together with Minkowski's inequality, we have for 
$\mathfrak{m}_1$, $\mathfrak{m}_2$, $\mathfrak{m}_3\in\mathcal{P}^\sigma_{p, q}(E)$,
\begin{align*}
 \scrmk_{p, q}(\mathfrak{m}_1, \mathfrak{m}_3)
 &=\left\| \mk_p^E(\mathfrak{m}^\bullet_1, \mathfrak{m}^\bullet_3)\right\|_{L^q(\sigma)}\\
 &\leq \left\| \mk_p^E(\mathfrak{m}^\bullet_1, \mathfrak{m}^\bullet_2)+\mk_p^E(\mathfrak{m}^\bullet_2, \mathfrak{m}^\bullet_3)\right\|_{L^q(\sigma)}\\
 &\leq \left\| \mk_p^E(\mathfrak{m}^\bullet_1, \mathfrak{m}^\bullet_2)\right\|_{L^q(\sigma)}+\left\|\mk_p^E(\mathfrak{m}^\bullet_2, \mathfrak{m}^\bullet_3)\right\|_{L^q(\sigma)}\\
 &=\scrmk_{p, q}(\mathfrak{m}_1, \mathfrak{m}_2)+\scrmk_{p, q}(\mathfrak{m}_2, \mathfrak{m}_3).
\end{align*}
By the above triangle inequality, we also see
\begin{align*}
    \scrmk_{p, q}(\mathfrak{m}, \mathfrak{n})&\leq \scrmk_{p, q}(\delta^\bullet_{E,y_0}\otimes\sigma, \mathfrak{m})+\scrmk_{p, q}(\delta^\bullet_{E,y_0}\otimes\sigma, \mathfrak{n})<\infty
\end{align*}
for all $\mathfrak{m}$, $\mathfrak{n} \in \mathcal{P}^\sigma_{p, q}(E)$.

\noindent
\textbf{(Separability):} 
Assume $q<\infty$.
 Let $\{\nu_m\}_{m\in \mathbb{N}}$ be a $\mk_p^Y$-dense subset in~$ \mathcal{P}_p(Y)$ (recall that $(\mathcal{P}_p(Y), \mk_p^Y)$ is separable). 
Since $(\Omega, \dist_\Omega)$ is separable, there exists a countable algebra $\mathcal{Q}\subset 2^\Omega$ of mutually disjoint sets which generates the Borel $\sigma$-algebra on $\Omega$. 
Given $I\in \N$ and a finite collection $\{Q_i\}_{i=1}^I\subset \mathcal{Q}$, by Lemma~\ref{lem: pushforward measurable} if we define 
\begin{align*}
    \left(\mathfrak{n}_{\{Q_i\}_{i=1}^I}^\bullet\otimes \sigma\right)(A)
    &\coloneqq\sum_{j\in \N}\int_\Omega\mathds{1}_{V_j}(\omega)( \Xi_{j,\omega})_\sharp\left(\sum_{i=1}^I \mathds{1}_{Q_i}(\omega)\nu_i+\mathds{1}_{\Omega\setminus \bigcup_{i=1}^{I}Q_i}(\omega)
    \delta_{y_0}^Y\right)(A)d\sigma(\omega),
\end{align*}
we see that $\mathfrak{n}_{\{Q_i\}_{i=1}^I}^\bullet\otimes \sigma\in  \mathcal{P}^\sigma_{p, q}(E)$.
Now we claim that
    \begin{align*}
        \mathcal{D}
        \coloneqq \left\{
          \mathfrak{n}_{\{Q_i\}_{i=1}^I}^\bullet\otimes \sigma
    \Biggm|
    \{Q_i\}_{i=1}^I\subset \mathcal{Q}
    \text{ for } I\in \N
    \right\}
    \end{align*}
    is $\scrmk_{p, q}$-dense in $\mathcal{P}^\sigma_{p, q}(E)$. 
    Since $\mathcal{D}$ is countable this will prove separability.

To this end, for $m\in \N$ and $\omega\in \Omega$, define
\[ 
\mathfrak{n}_m^\omega\coloneqq \sum_{j\in \N}\mathds{1}_{V_j}(\omega)(\Xi_{j, \omega})_\sharp \nu_m\in \mathcal{P}_p(E),
\]
supported on $\pi^{-1}(\{\omega\})$, by Lemma~\ref{lem: pushforward measurable}, 
for a fixed Borel $A\subset E$ the map $\omega\mapsto \mathfrak{n}^\omega_m(A)$ is Borel. 
Now fix $\mathfrak{m}=\mathfrak{m}^\bullet\otimes \sigma\in \mathcal{P}^\sigma_{p, q}(E)$, then we can define a function $f_m:\Omega  \to \R$ by
\begin{align*}
 f_m(\omega):
 =\mk_p^E(\mathfrak{n}_m^\omega, \mathfrak{m}^\omega),
\end{align*}
which is then Borel for each $m\in \mathbb{N}$ by~\cite{AmbrosioGigliSavare08}*{Lemma 12.4.7}; note that if $\omega\in V_j$ for some $j$, then 
\[
f_m(\omega)=\mk^Y_p(\nu_m, (\Xi_{j, \omega}^{-1})_\sharp \mathfrak{m}^\omega).
\]
For  $\ell$, $m\in \N$, 
define the Borel set
\[
\Omega_{\ell, m}\coloneqq f_m^{-1}([0, \ell^{-1}))\cap \left(\bigcap_{i=1}^{m-1} f_i^{-1}([\ell^{-1}, \infty))\right),
\]
note $\{\Omega_{\ell, m}\}_{m\in \mathbb{N}}$ is a cover of $\Omega$ consisting of mutually disjoint sets for each $\ell\in \N$.
Let us also write
\begin{align*}
    \widetilde{\delta}^\omega_{E, y_0}\coloneqq \sum_{j\in \N}\mathds{1}_{V_j}(\omega)(\Xi_{j, \omega})_\sharp\delta^Y_{y_0},
\end{align*}
again by Lemma~\ref{lem: pushforward measurable} the measure (whose disintegration with respect to $\pi$ is given by) $ \widetilde{\delta}^\omega_{E, y_0}\otimes \sigma$ belongs to $\mathcal{P}^\sigma_{p, q}(E)$. 
For each $\ell\in\mathbb{N}$,
since 
\begin{align*}
    \left\|\mk_p^E(\widetilde{\delta}^\bullet_{E, y_0}, \mathfrak{m}^\bullet)\right\|_{L^q(\sigma)}
\leq \scrmk_{p, q}(\widetilde{\delta}^\bullet_{E, y_0}\otimes\sigma, \delta^\bullet_{E, y_0}\otimes\sigma)+\scrmk_{p, q}(\delta^\bullet_{E, y_0}\otimes\sigma, \mathfrak{m})
<\infty,
\end{align*}
there exists $I_\ell\in \N$ such that
\begin{align*}
\left\|
\mk_p^E\left(
\widetilde{\delta}^\bullet_{E, y_0},
\mathfrak{m}^{\bullet}
\right)\mathds{1}_{\Omega\setminus  \cup_{i=1}^{I_\ell} \Omega_{\ell, i}}
\right\|_{L^q(\sigma)}
<\ell^{-1}.
\end{align*}
Now for $\omega\in \Omega$ and $\ell\in \N$, define the measures $\mathfrak{m}_\ell^\omega\in \mathcal{P}(E)$ by
\begin{align*}
    \mathfrak{m}_\ell^\omega:
    &=\sum_{j\in \N}\mathds{1}_{V_j}(\omega)(\Xi_{j, \omega})_\sharp\left(\sum_{i=1}^{I_\ell}\mathds{1}_{\Omega_{\ell, i}}(\omega)\nu_m\right)+\mathds{1}_{\Omega\setminus \bigcup_{i=1}^{I_\ell}\Omega_{\ell, i}}(\omega)
    \widetilde{\delta}^\omega_{E, y_0}.
\end{align*}
By Lemma~\ref{lem: pushforward measurable}, we have  $\mathfrak{m}_\ell\coloneqq \mathfrak{m}_\ell^\bullet\otimes \sigma\in \mathcal{P}^\sigma_{p, q}(E)$, and for any $\ell\in \N$ and $1\leq i\leq I_\ell$, we have $\mathfrak{m}_\ell^{\omega}= \mathfrak{n}_i^{\omega}$ whenever $\omega\in \Omega_{\ell, i}$.
Then by definition of $I_\ell$, 
\begin{align*}
\scrmk_{p, q}(\mathfrak{m}_\ell, \mathfrak{m})
&=
\left\|
\sum_{i=1}^{I_\ell}
\mk_p^E\left(\mathfrak{m}_\ell^{\bullet},\mathfrak{m}^{\bullet}\right)\mathds{1}_{\Omega_{\ell, i}}
+
\mk_p^E\left(
\mathfrak{m}_\ell^{\bullet},
\mathfrak{m}^{\bullet}
\right)\mathds{1}_{\Omega\setminus  \bigcup_{i=1}^{I_\ell} \Omega_{\ell, i}}
\right\|_{L^q(\sigma)}\\
&\leq 
\left\|
\sum_{i=1}^{I_\ell}\mk_p^E\left(\mathfrak{n}_i^{\bullet},\mathfrak{m}^{\bullet}
\right)
\mathds{1}_{\Omega_{\ell, i}}\right\|_{L^q(\sigma)}
+
\left\|
\mk_p^E\left(
\widetilde{\delta}^\bullet_{E, y_0},
\mathfrak{m}^{\bullet}
\right)\mathds{1}_{\Omega\setminus  \bigcup_{i=1}^{I_\ell} \Omega_{\ell, i}}
\right\|_{L^q(\sigma)}\\
&<
\left\|\ell^{-1}
\sum_{i=1}^{I_\ell}
\mathds{1}_{\Omega_{\ell, i}}\right\|_{L^q(\sigma)}
+
\ell^{-1}\\
&\leq 2\ell^{-1}.
\end{align*}

Fix $\varepsilon>0$, and let $\ell_0\in \N$ be such that 
\begin{align}\label{eqn: close to m}
    \scrmk_{p, q}(\mathfrak{m}_{\ell_0}, \mathfrak{m})<\varepsilon.
\end{align}
We now construct an element of~$\mathcal{D}$ approximating $\mathfrak{m}_{\ell_0}$. 
Let 
\begin{align*}
M\coloneqq \max_{1\leq i, i'\leq I_{\ell_0}}
\left\{
\max\left\{
\mk_p^Y(\nu_i, \nu_{i'})^q,\mk_p^Y(\delta_{y_0}^Y, \nu_{i'})^q\right\}\right\}.
\end{align*}
By \cite{HytonenvanNeervenVeraarWeis16}*{Lemma A.1.2}, for each $1\leq i\leq I_{\ell_0}$ there exists a set $\widetilde{Q}_i\in \mathcal{Q}$ with the property that 
$\sigma(\widetilde{Q}_i\Delta \Omega_{\ell_0, i})<\varepsilon^q/(M I_{\ell_0})$,
using these define 
\[
Q_1\coloneqq \widetilde{Q}_1, \qquad Q_i\coloneqq \widetilde{Q}_i\setminus \bigcup_{i'=1}^{i-1}Q_{i'} \quad
\text{for }2\leq i\leq I_{\ell_0}.
\]
We observe from Remark~\ref{rem: MK convex} that
\begin{align*}
\mk_p^E\left(\widetilde{\delta}^\omega_{E, y_0},
\sum_{j\in \N}\mathds{1}_{V_j}(\omega)(\Xi_{j, \omega})_\sharp\nu_i\right) 
&
\leq
\sum_{j\in \N}\mathds{1}_{V_j}(\omega)
\mk_p^E\left((\Xi_{j, \omega})_\sharp \delta^Y_{y_0},
(\Xi_{j, \omega})_\sharp\nu_i\right) \\
&=
\sum_{j\in \N}\mathds{1}_{V_j}(\omega)
\mk_p^Y\left(\delta^Y_{y_0}, \nu_i\right)
=\mk_p^Y\left(\delta^Y_{y_0}, \nu_i\right).
\end{align*}
Similarly, for each $1\leq i'\leq I_{\ell_0}$,
we have
\begin{align*}
\mk_p^E\left(\sum_{j\in \N}\mathds{1}_{V_j}(\omega)(\Xi_{j, \omega})_\sharp \nu_{i'},
\sum_{j\in \N}\mathds{1}_{V_j}(\omega)(\Xi_{j, \omega})_\sharp\nu_i\right)
&
\leq
\mk_p^Y\left(\nu_{i'}, \nu_i\right).
\end{align*}
Together, these imply that for each $1\leq i\leq I_{\ell_0}$,
\begin{align}
\begin{split}
\label{eqn: approximation each piece}
&\int_{Q_i}\mk_p^E\left(\mathfrak{m}_{\ell_0}^\bullet, \sum_{j\in \N}\mathds{1}_{V_j} (\Xi_{j,  \bullet})_\sharp\nu_i\right)^qd\sigma \\
&=
\sum_{i'=1}^{I_{\ell_0}}
\int_{Q_i\cap \Omega_{\ell_0, i'}} \mk_p^E\left(\sum_{j\in \N}\mathds{1}_{V_j} (\Xi_{j,  \bullet})_\sharp \nu_{i'},
\sum_{j\in \N}\mathds{1}_{V_j} (\Xi_{j,  \bullet})_\sharp\nu_{i}\right)^qd\sigma \\
&\quad+
\int_{Q_i \setminus \bigcup_{i'=1}^{I_\ell}\Omega_{\ell_0, i'}}\mk_p^E\left(    \widetilde{\delta}^\bullet_{E, y_0},
\sum_{j\in \N}\mathds{1}_{V_j} (\Xi_{j,  \bullet})_\sharp\nu_{i}\right)^qd\sigma \\
&\leq
\sum_{i'\neq i, 1\leq i' \leq I_{\ell_0}}
\int_{Q_i\cap \Omega_{\ell_0, i'}} \mk_p^Y(\nu_{i'},\nu_{i})^qd\sigma
+
\int_{ Q_i\setminus \cup_{i'=1}^{I_{\ell_0}} \Omega_{\ell_0, i'}}
\mk_p^Y\left(\delta^Y_{y_0}, \nu_i\right)^qd\sigma \\
&\leq
M \cdot \sigma(Q_i\setminus \Omega_{\ell_0,i})\\
&<  \frac{\varepsilon^q}{I_{\ell_0}}.
\end{split}
\end{align}
On the other hand, setting \[
\Omega'\coloneqq \left[
 \Omega\setminus \bigcup_{i=1}^{I_{\ell_0}} (\widetilde{Q}_i\cup \Omega_{\ell_0, i})
 \right],
\]
we can see that 
\begin{align*}
\Omega\setminus \bigcup_{i=1}^{I_{\ell_0}}Q_i
=
\Omega'
 \cup\left[ 
 \left(\bigcup_{i=1}^{I_{\ell_0}}\Omega_{\ell_0, i}\right)
 \setminus\left(\bigcup_{i=1}^{I_{\ell_0}}\widetilde{Q}_i\right)
 \right]
\subset
\Omega' \cup \left[\bigcup_{i=1}^{I_{\ell_0}}\left(\Omega_{\ell_0, i}\setminus \widetilde{Q}_i \right)\right].
\end{align*}
Since 
$\mathfrak{m}_{\ell_0}^\omega=\widetilde{\delta}^\omega_{E, y_0}$ for $\omega\in \Omega'$ we find 
\begin{align}
\begin{split}
&\int_{\Omega\setminus \bigcup_{i=1}^{I_{\ell_0}}Q_i}
\mk_p^E(\mathfrak{m}_{\ell_0}^\omega, \widetilde{\delta}^\omega_{E, y_0})^qd\sigma(\omega)\\
&\leq \int_{\Omega'}
\mk_p^E(\mathfrak{m}_{\ell_0}^\omega, \widetilde{\delta}^\omega_{E, y_0})^qd\sigma(\omega)
+
\sum_{i=1}^{I_{\ell_0}}
\int_{\Omega_{\ell_0,i}\setminus\widetilde{Q}_i}\mk_p^E(\mathfrak{m}_{\ell_0}^\omega, \widetilde{\delta}^\omega_{E, y_0})^qd\sigma(\omega)\\
&\leq \sum_{i=1}^{I_{\ell_0}}\int_{\Omega_{\ell_0, i}\setminus \widetilde{Q}_i}\left(\sum_{j\in\N}\mathds{1}_{V_j}(\omega)\mk_p^E((\Xi_{j, \omega})_\sharp\nu_i,(\Xi_{j, \omega})_\sharp\delta_{y_0}^Y)\right)^qd\sigma(\omega)\\
&\leq
\sum_{i=1}^{I_{\ell_0}}\mk_p^E(\nu_i,\delta_{y_0}^Y)^q\cdot\sigma(\Omega_{\ell_0, i}\setminus \widetilde{Q}_i)\\
&\leq M\cdot \sum_{i=1}^{I_{\ell_0}}\sigma(\Omega_{\ell_0, i}\Delta \widetilde{Q}_i)\\
&<\varepsilon^q.
\label{eqn: approximation complement}
\end{split}
\end{align}
Thus if we take 
\begin{align*}
\mathfrak{n}^\bullet\coloneqq&
\sum_{j\in \N}\mathds{1}_{V_j}( \Xi_{j,\bullet})_\sharp
\left(\sum_{i=1}^I \mathds{1}_{Q_i}\nu_i+\mathds{1}_{\Omega\setminus \bigcup_{i=1}^{I}Q_i}\delta_{y_0}^Y\right)
=    
\sum_{j\in \N}\mathds{1}_{V_j}\sum_{i=1}^{I_{\ell_0}}\mathds{1}_{Q_i}(\Xi_{j, \bullet})_\sharp\nu_i+\mathds{1}_{\Omega\setminus\bigcup_{i=1}^{I_{\ell_0}}Q_i}\widetilde{\delta}^\omega_{E, y_0},
\end{align*}
we find for $\mathfrak{n}\coloneqq \mathfrak{n}^\bullet\otimes \sigma\in \mathcal{D}$, using \eqref{eqn: close to m}, \eqref{eqn: approximation each piece}, and \eqref{eqn: approximation complement} that
\begin{align*}
    \scrmk_{p, q}(\mathfrak{n}, \mathfrak{m})&\leq \scrmk_{p, q}(\mathfrak{n}, \mathfrak{m}_{\ell_0})+\scrmk_{p, q}(\mathfrak{m}_{\ell_0}, \mathfrak{m})
    <\left(1+2^{\frac{1}{q}}\right)\varepsilon,
\end{align*}
finishing the proof of separability.

\noindent
\textbf{(Completeness):} 
Let $(\mathfrak{m}_\ell)_{\ell\in \N}$ be a Cauchy sequence in $(\mathcal{P}^\sigma_{p, q}(E), \scrmk_{p,q})$. 
Then there exists $\Omega_{p, q} \subset \Omega$ such that 
$\sigma(\Omega_{p, q})=1$ and $(\mathfrak{m}^\omega_\ell)_{\ell\in \N}$ is Cauchy in $\mk_p^E$ for any $\omega\in \Omega_{p, q}$. Indeed, if $q=\infty$, then the claim is trivial.
In the case $q<\infty$, for any $\varepsilon_1, \varepsilon_2>0$, there exists some $L\in \N$ such that whenever $\ell_1, \ell_2\geq L$, we have $\scrmk_{p, q}(\mathfrak{m}_{\ell_1}, \mathfrak{m}_{\ell_2})<\varepsilon_1\varepsilon_2$. 
It follows from Chebyshev's inequality that 
\begin{align*}
 \sigma\left(\{\omega\in \Omega\mid \mk_p^E(\mathfrak{m}^\omega_{\ell_1}, \mathfrak{m}^\omega_{\ell_2}) \geq \varepsilon_1 \}\right)
 &\leq\varepsilon_1^{-q}\int_{\Omega} 
 \mk_p^E(\mathfrak{m}^\omega_{\ell_1}, \mathfrak{m}^\omega_{\ell_2})^qd\sigma_{n-1}(\omega)
 = \varepsilon_1^{-q}\scrmk_{p, q}(\mathfrak{m}_{\ell_1}, \mathfrak{m}_{\ell_2})^q
 <\varepsilon_2^q,
\end{align*}
for $\ell_1, \ell_2\geq L$. 
Now we can take a subsequence of $(\mathfrak{m}_\ell)_{\ell\in\mathbb{N}}$ (not relabeled) such that for all~$\ell\in\mathbb{N}$,
\begin{align*}
\sigma(\left\{\omega\in \Omega\mid \mk_p^E(\mathfrak{m}^\omega_{\ell}, \mathfrak{m}^\omega_{\ell+1})\geq 2^{-\ell} \right\})\leq 2^{-\ell}.
\end{align*}
Setting 
\[
\Omega_{p, q}\coloneqq \Omega\setminus \left(\bigcap_{m=1}^\infty\bigcup_{\ell=m}^\infty \left\{\omega\in \Omega\Bigm| \mk_p^E(\mathfrak{m}^\omega_{m}, \mathfrak{m}^\omega_{m+1})\geq 2^{-\ell} \right\}\right),
\]
we have
\begin{align*}
 \sigma(\Omega_{p, q})= 1-\sigma\left(\bigcap_{m=1}^\infty\bigcup_{\ell=m}^\infty \left\{\omega\in \Omega\Bigm| \mk_p^E(\mathfrak{m}^\omega_{m}, \mathfrak{m}^\omega_{m+1})\geq 2^{-\ell} \right\}\right)=1
\end{align*}
by the Borel--Cantelli lemma, thus the sequence $(\mathfrak{m}^\omega_\ell)_{\ell\in \N}$ is Cauchy in $\mk_p^E$ whenever $\omega\in \Omega_{p, q}$. 

Since $\mk_p^E$ is complete on $\mathcal{P}_p(E)$, 
for every $\omega\in \Omega_{p, q}$, 
there is $\mathfrak{m}^\omega\in \mathcal{P}_p(E)$ such that 
\begin{equation}\label{2}
\lim_{\ell\to\infty}\mk_p^E(\mathfrak{m}_\ell^{\omega}, \mathfrak{m}^{\omega})=0.
\end{equation}
Then, for $\phi\in C_b(E)$,
it follows from Theorem~\ref{thm: wassconv} that 
\[
\int_E \phi(u)d\mathfrak{m}^\omega(u)
=\lim_{\ell\to\infty}
\int_E \phi(u)d\mathfrak{m}^\omega_\ell(u),
\]
which is a $\mathcal{B}_\sigma$-measurable function in $\omega$ by \nameref{thm: disintegration}.
For any open set $A\subset E$, the sequence $\{\min\{1, m\dist_E(\cdot, E\setminus A)\}\}_{m\in \N}\subset C_b(E)$ of nonnegative functions monotonically increases pointwise everywhere to $\mathds{1}_A$, hence by monotone convergence we see the map
$\omega\mapsto \mathfrak{m}^\omega(A)$
is  Borel for all open $A\subset E$. Thus defining the function $\mathfrak{m}$ on Borel sets $A\subset E$~by
\begin{align*}
    \mathfrak{m}(A)\coloneqq \int_\Omega \mathfrak{m}^\omega(A)d\sigma(\omega),
\end{align*}
  using Remark~\ref{V58} we see $\mathfrak{m}\in \mathcal{P}(E)$.
  Also for $\phi\in C_b(E)$ since  each~$\mathfrak{m}_\ell^\omega$ and~$\sigma$ are probability measures, the dominated convergence theorem yields 
\begin{equation*}
\int_E\phi d\mathfrak{m}=\int_\Omega
\int_E \phi(u)d\mathfrak{m}^\omega(u)
d\sigma(\omega)
=\lim_{\ell\to\infty}
\int_\Omega
\int_E \phi(u)d\mathfrak{m}_\ell^\omega(u)
d\sigma(\omega),
\end{equation*}
thus $\mathfrak{m}\in \mathcal{P}^\sigma(E)$; the uniqueness in \nameref{thm: disintegration} implies that $\mathfrak{m}=\mathfrak{m}^{\bullet} \otimes \sigma$.

Now fix $\varepsilon>0$, then there exists $\ell_0$ such that for all $\ell$, $m\geq \ell_0$ we have $\scrmk_{p, q}(\mathfrak{m}_m, \mathfrak{m}_\ell)<\varepsilon$. Then using Fatou's lemma when $q<\infty$ and directly by definition for $q=\infty$, and recalling \eqref{2},
\begin{align}\label{22}
\begin{split}
 \left\| \mk_p^E(\mathfrak{m}_\ell^\bullet, \mathfrak{m}^\bullet)\right\|_{L^q(\sigma)}
 &= \left\| \varliminf_{m\to\infty}\mk_p^E(\mathfrak{m}_\ell^\bullet, \mathfrak{m}_m^\bullet)\right\|_{L^q(\sigma)}
 \leq\varliminf_{m\to\infty} \left\| \mk_p^E(\mathfrak{m}_\ell^\bullet, \mathfrak{m}_m^\bullet)\right\|_{L^q(\sigma)}
 <\varepsilon,
\end{split}
\end{align}
which ensures $ \mk_p^E(\mathfrak{m}_\ell^\bullet, \mathfrak{m}^\bullet)\in L^q(\sigma)$.
Since we have
\[
\mk_p^E(\delta^\omega_{E,y_0}, \mathfrak{m}^\omega)
\leq
\mk_p^E(\delta^\omega_{E,y_0}, \mathfrak{m}_{\ell_0}^\omega)+
\mk_p^E(\mathfrak{m}_{\ell_0}^\omega, \mathfrak{m}^\omega)
\quad
\text{for }\omega \in \Omega_{p, q},
\]
 $\sigma(\Omega_{p, q})=1$, and $\mathfrak{m}_{\ell_0}\in \mathcal{P}^\sigma_{p, q}(E)$, we conclude $\mathfrak{m}\in \mathcal{P}^\sigma_{p,q}(E)$.
It also follows from \eqref{22} that 
\[
\lim_{\ell\to\infty} 
\scrmk_{p,q}(\mathfrak{m}_\ell, \mathfrak{m})
 =\lim_{\ell\to\infty}
 \left\| \mk_p^E(\mathfrak{m}_\ell^\bullet,\mathfrak{m}^\bullet)\right\|_{L^q(\sigma)}
=0
\]
for the particular chosen subsequence. Since the original sequence is Cauchy, the full sequence also converges in $\scrmk_{p, q}$ to $\mathfrak{m}$. 
This proves completeness.
\end{proof}

\subsection{Existence of geodesics}
We now prove that $(\mathcal{P}^\sigma_{p, q}(E), \scrmk_{p, q})$ is a geodesic space. 
When $p>1$ on a geodesic space $Y$, a minimal geodesic in $(\mathcal{P}_p(Y),\mk_p^Y)$ can be obtained as a family of pushforwards of what is known as a \emph{dynamic optimal coupling}. More specifically, we start by recalling the following space (which will also be used in the proof of Theorem~\ref{thm: main disint}~\eqref{thm: disint geodesic}). 
\begin{definition}\label{def: geod}
    Suppose $(Z, \dist_Z)$ is complete, separable, and a geodesic space. We denote by $\mathcal{G}(Z)$ the space of minimal geodesics $\rho:[0,1]\to Z$ with respect to $\dist_Z$. Define the metric $\dist_{\mathcal{G}(Z)}$ on~$\mathcal{G}(Z)$ by 
    \begin{align*}
        \dist_{\mathcal{G}(Z)}(\rho_1, \rho_2)\coloneqq\sup_{\tau\in [0, 1]}\dist_Z(\rho_1(\tau), \rho_2(\tau)).
    \end{align*}
For $\tau\in [0, 1]$ the evaluation map $\mathrm{e}^\tau: \mathcal{G}(Z)\to Z$ is defined by $\mathrm{e}^\tau(\rho)\coloneqq \rho(\tau)$.
\end{definition}
We can see that $(\mathcal{G}(Z), \dist_{\mathcal{G}(Z)})$ is complete and separable since it is a closed subset of $C([0, 1]; Z)$ with the same metric $\dist_{\mathcal{G}(Z)}$, which is also complete and separable by \cite{Srivastava98}*{Theorem 2.4.3}.
Then it is known that $\mk^Z_p$ minimal geodesics have the following description.
\begin{proposition}[\cite{Villani09}*{Corollaries~7.22, 7.23,  and Theorem~7.30~(i)}]\label{prop: geodesics}
Let 
$(Z,\dist_Z)$ be a complete, separable geodesic space and $p>1$.
Then, for $\mu_0,\mu_1\in \mathcal{P}_p(Z)$,
there exists $\Gamma\in \mathcal{P}(\mathcal{G}(Z))$ such that 
$(\mathrm{e}^{0}\times \mathrm{e}^{1})_\sharp \Gamma$ is an $p$-optimal coupling between $\mu_0$ and $\mu_1$, and 
\[
\mathrm{e}^{\bullet}_\sharp \Gamma:[0,1]\to \mathcal{P}(Z)
\]
is a minimal geodesic from $\mu_0$ and $\mu_1$ in $(\mathcal{P}_p(Z),\mk^Z_p)$.
Moreover, for $\tau_1,\tau_2\in [0,1]$ the measure
$(\mathrm{e}^{\tau_1}\times \mathrm{e}^{\tau_2})_\sharp\Gamma \in \Pi(\mathrm{e}^{\tau_1}_\sharp \Gamma,\mathrm{e}^{\tau_2}_\sharp \Gamma)$ is a $p$-optimal coupling. Conversely, for any $\Gamma\in \mathcal{P}(\mathcal{G}(Z))$ such that $(\mathrm{e}^0\times \mathrm{e}^0)_\sharp\Gamma$ is a $p$-optimal coupling between $ \mathrm{e}^0_\sharp \Gamma$ and $\mathrm{e}^1_\sharp \Gamma$, 
\[
\mathrm{e}^{\bullet}_\sharp \Gamma:[0,1]\to \mathcal{P}(Z)
\]
is a minimal geodesic from $\mu_0$ and $\mu_1$ in $(\mathcal{P}_p(Z),\mk^Z_p)$.
\end{proposition}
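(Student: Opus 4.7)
The plan is to prove this via the standard strategy of lifting a $p$-optimal coupling between $\mu_0$ and $\mu_1$ to a probability measure on the space of minimal geodesics $\mathcal{G}(Z)$, then showing that the pushforwards by the evaluation maps trace out a minimal geodesic in $(\mathcal{P}_p(Z), \mk_p^Z)$. I would proceed in four steps.

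First, fix a $p$-optimal coupling $\gamma\in \Pi(\mu_0,\mu_1)$ (existence noted just after~\eqref{coupling}). The aim is to produce a Borel map $S:Z\times Z\to \mathcal{G}(Z)$ with $S(x_0,x_1)(0)=x_0$ and $S(x_0,x_1)(1)=x_1$ for all $(x_0,x_1)$; then $\Gamma:=S_\sharp\gamma$ automatically satisfies $(\mathrm{e}^0\times \mathrm{e}^1)_\sharp\Gamma=\gamma$, so the $p$-optimality of the endpoint coupling will be immediate. The set-valued map $F(x_0,x_1):=\{\rho\in \mathcal{G}(Z):\rho(0)=x_0,\ \rho(1)=x_1\}$ has nonempty closed values (non-emptiness from the geodesic hypothesis on $Z$, closedness from continuity of $\mathrm{e}^0,\mathrm{e}^1$), and its graph is closed in $Z\times Z\times \mathcal{G}(Z)$. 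Since $(\mathcal{G}(Z),\dist_{\mathcal{G}(Z)})$ is complete and separable, a Kuratowski--Ryll-Nardzewski-type selection theorem supplies the Borel selector $S$. I expect this measurable selection step to be the principal technical obstacle: one must verify weak measurability of $F$ (e.g., Borelness of $\{(x_0,x_1):F(x_0,x_1)\cap O\neq\emptyset\}$ for open $O\subset\mathcal{G}(Z)$), which typically requires a careful compactness/approximation argument.

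Second, I would establish the Lipschitz estimate required in~\eqref{geodineq}. For $0\leq \tau_1<\tau_2\leq 1$, the measure $(\mathrm{e}^{\tau_1}\times \mathrm{e}^{\tau_2})_\sharp\Gamma$ is an admissible coupling between $\mathrm{e}^{\tau_1}_\sharp\Gamma$ and $\mathrm{e}^{\tau_2}_\sharp\Gamma$. Since $\Gamma$ is concentrated on minimal geodesics and hence $\dist_Z(\rho(\tau_1),\rho(\tau_2))=(\tau_2-\tau_1)\dist_Z(\rho(0),\rho(1))$,
\[
\mk_p^Z(\mathrm{e}^{\tau_1}_\sharp\Gamma,\mathrm{e}^{\tau_2}_\sharp\Gamma)^p\leq \int_{\mathcal{G}(Z)}\dist_Z(\rho(\tau_1),\rho(\tau_2))^p d\Gamma(\rho)=(\tau_2-\tau_1)^p \mk_p^Z(\mu_0,\mu_1)^p,
\]
where the last equality uses $p$-optimality of $\gamma=(\mathrm{e}^0\times \mathrm{e}^1)_\sharp\Gamma$. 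This is exactly~\eqref{geodineq} applied to the curve $\tau\mapsto \mathrm{e}^\tau_\sharp\Gamma$ in $(\mathcal{P}_p(Z),\mk_p^Z)$, so it is a minimal geodesic between $\mu_0$ and $\mu_1$.

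Third, for the intermediate $p$-optimality claim, I would combine the triangle inequality with the Lipschitz bound just obtained:
\[
\mk_p^Z(\mu_0,\mu_1)\leq \mk_p^Z(\mu_0,\mathrm{e}^{\tau_1}_\sharp\Gamma)+\mk_p^Z(\mathrm{e}^{\tau_1}_\sharp\Gamma,\mathrm{e}^{\tau_2}_\sharp\Gamma)+\mk_p^Z(\mathrm{e}^{\tau_2}_\sharp\Gamma,\mu_1)\leq \mk_p^Z(\mu_0,\mu_1),
\]
where each term on the right is controlled by $\tau_1$, $\tau_2-\tau_1$, and $1-\tau_2$ times $\mk_p^Z(\mu_0,\mu_1)$ respectively. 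Equality throughout forces $\mk_p^Z(\mathrm{e}^{\tau_1}_\sharp\Gamma,\mathrm{e}^{\tau_2}_\sharp\Gamma)=(\tau_2-\tau_1)\mk_p^Z(\mu_0,\mu_1)$, which matches the upper bound computed from $(\mathrm{e}^{\tau_1}\times \mathrm{e}^{\tau_2})_\sharp\Gamma$; hence that coupling attains the infimum and is $p$-optimal. Finally, the converse statement requires no measurable selection at all: given any $\Gamma\in \mathcal{P}(\mathcal{G}(Z))$ whose endpoint projection $(\mathrm{e}^0\times \mathrm{e}^1)_\sharp\Gamma$ is a $p$-optimal coupling of $\mathrm{e}^0_\sharp\Gamma$ and $\mathrm{e}^1_\sharp\Gamma$, the second step applies verbatim and yields that $\mathrm{e}^\bullet_\sharp\Gamma$ is a minimal geodesic, completing the plan.
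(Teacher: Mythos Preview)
The paper does not supply its own proof of this proposition; it is quoted verbatim from \cite{Villani09}*{Corollaries~7.22, 7.23, and Theorem~7.30(i)} and used as a black box. So there is no in-paper argument to compare against.

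That said, your sketch is the standard route taken in the cited reference: lift a $p$-optimal coupling to $\mathcal{P}(\mathcal{G}(Z))$ via a measurable geodesic selection, use the geodesic identity $\dist_Z(\rho(\tau_1),\rho(\tau_2))=|\tau_1-\tau_2|\dist_Z(\rho(0),\rho(1))$ to get the Lipschitz bound~\eqref{geodineq}, and then force equality in the triangle inequality to upgrade intermediate couplings to optimal ones. Your identification of the measurable selection as the main technical point is accurate; in Villani's treatment this is handled by noting that the set of minimal geodesics with prescribed endpoints is closed in $\mathcal{G}(Z)$ and applying a selection theorem for closed-valued multifunctions on Polish spaces (which is precisely the Kuratowski--Ryll-Nardzewski machinery this paper invokes elsewhere, Theorem~\ref{measurableselection}). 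One small remark: the statement as printed contains a typo in the converse clause, where $(\mathrm{e}^0\times\mathrm{e}^0)_\sharp\Gamma$ should read $(\mathrm{e}^0\times\mathrm{e}^1)_\sharp\Gamma$; your proposal silently corrects this, which is the right reading.
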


We will take $\mk_p^Y$ minimal geodesics connecting each pair $\mathfrak{m}_1^\omega$ and $\mathfrak{m}_2^\omega$, then use these to construct a minimal geodesic for $\scrmk_{p, q}$. However, in order to do so we must make sure the dependence on $\omega$ is $\mathcal{B}_\sigma$-measurable, hence we will have to use the Kuratowski and Ryll-Nardzewski measurable selection theorem which we will now recall.
\begin{definition}
Let $(X,\mathcal{F}_X)$ be a measurable space and $(Z,\dist_Z)$ be a metric space.
A set-valued function $F$ from $X$ to $2^Z$ is said to be 
\emph{$\mathcal{F}_X$-weakly measurable} if
\[
\{ x\in X\ |\ F(x) \cap O \neq \emptyset \}\in\mathcal{F}_X
\]
for any open $O\subset Z$.
\end{definition}
\begin{remark}\label{rem: measurability alternate}
By \cite{KuratowskiRyll-Nardzewski65}*{Corollary 1}, it is equivalent to replace ``open'' by ``closed'' in the above definition; it is then clear that if $Z$ is $\sigma$-compact then it is also equivalent to replace ``open'' by ``compact''.
\end{remark}
\begin{theorem}[\cite{KuratowskiRyll-Nardzewski65}*{Main Theorem}]\label{measurableselection}
Let $(X,\mathcal{F}_X, \mu)$ be a measure space and 
$(Z,\dist_Z)$ a complete, separable metric space.
For a map $F:X\to 2^{Z}$,
if $F(x)$ is nonempty and closed for $\mu$-a.e. $x\in X$,
and 
$F$ is $\mathcal{F}_X$-weakly measurable, 
then there exists an $\mathcal{F}_X$-measurable map $f_\bullet:X\to Z$ such that $f_x\in F(x)$ for $\mu$-a.e. $x\in X$.
Such a map is called a \emph{measurable selection} of~$F$.
\end{theorem}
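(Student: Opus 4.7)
The plan is to follow the classical argument of Kuratowski and Ryll-Nardzewski: produce inductively a sequence of $\mathcal{F}_X$-measurable maps $f_n \colon X \to Z$, each taking only countably many values, so that $\dist_Z(f_n(x), F(x)) < 2^{-n}$ and $\dist_Z(f_{n+1}(x), f_n(x)) < 2^{-n+1}$ for $\mu$-a.e.\ $x$; then pass to the pointwise limit using completeness of $Z$ and closedness of the values $F(x)$.

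To begin, fix a countable dense set $\{z_k\}_{k\in\N}\subset Z$, which exists by separability of $Z$, and replace $X$ by a full-measure subset on which $F(x)$ is nonempty and closed. For the base case, weak measurability of $F$ gives that the sets
\[
A_k^{0} := \{x\in X \mid F(x)\cap B^Z_{1}(z_k)\neq\emptyset\}
\]
lie in $\mathcal{F}_X$; these cover $X$ by density of $\{z_k\}$ together with nonemptiness of the values. Defining $f_0(x) := z_{k(x)}$ where $k(x)$ is the least index with $x\in A_{k(x)}^0$ yields an $\mathcal{F}_X$-measurable map (its level sets are Boolean combinations of the $A_k^0$) satisfying $\dist_Z(f_0(x),F(x))<1$.

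For the inductive step, given $f_n$ with the desired approximation property, set
\[
A_k^{n+1}:=\{x\in X\mid F(x)\cap B^Z_{2^{-n-1}}(z_k)\neq\emptyset\}\cap \{x\in X\mid \dist_Z(f_n(x),z_k)<2^{-n+1}\};
\]
the first factor lies in $\mathcal{F}_X$ by weak measurability of $F$, and the second by $\mathcal{F}_X$-measurability of $f_n$. The key claim is that the $A_k^{n+1}$ cover $X$: given $x$, pick $y\in F(x)$ with $\dist_Z(f_n(x),y)<2^{-n}$, then a dense point $z_k$ with $\dist_Z(z_k,y)<2^{-n-1}$; the triangle inequality yields $\dist_Z(z_k,F(x))<2^{-n-1}$ and $\dist_Z(z_k,f_n(x))<2^{-n-1}+2^{-n}<2^{-n+1}$, so $x\in A_k^{n+1}$. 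Choosing the smallest valid index $k(x)$ produces an $\mathcal{F}_X$-measurable $f_{n+1}$ obeying both inductive bounds.

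The step-size bound $\dist_Z(f_{n+1}(x),f_n(x))<2^{-n+1}$ ensures $(f_n(x))_{n\in\N}$ is Cauchy pointwise, so completeness of $Z$ lets us define $f_x := \lim_n f_n(x)$ (extended arbitrarily on the null exceptional set); since $F(x)$ is closed and $\dist_Z(f_n(x),F(x))\to 0$, we have $f_x\in F(x)$ almost everywhere, and $f_\bullet$ inherits $\mathcal{F}_X$-measurability as the pointwise limit of $\mathcal{F}_X$-measurable maps into a metric space. The principal technical obstacle is the covering verification at each induction step: the two radii $2^{-n-1}$ and $2^{-n+1}$ must be coordinated so that density of $\{z_k\}$ together with the inductive control on $\dist_Z(f_n,F)$ forces existence of a common $z_k$ close both to $F(x)$ (tightly enough to advance the approximation scale) and to $f_n(x)$ (loosely enough that some such $z_k$ always exists, yielding the Cauchy property).
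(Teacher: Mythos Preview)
Your proof is correct and follows the classical Kuratowski--Ryll-Nardzewski inductive scheme faithfully; the radii $2^{-n-1}$ and $2^{-n+1}$ are coordinated correctly so that the covering argument and the Cauchy estimate both go through. Note, however, that the paper does not supply its own proof of this theorem: it is quoted as an external result from \cite{KuratowskiRyll-Nardzewski65} and invoked as a tool (in the proofs of Theorem~\ref{thm: main disint}~\eqref{thm: disint geodesic} and~\eqref{thm: disint duality}), so there is no in-paper argument to compare against---your write-up simply fills in the cited proof.
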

We now show a preliminary lemma on convergence of dynamic optimal couplings and their pushforwards.
\begin{lemma}\label{lem: pushforward continuity}
Let $(Z,\dist_Z)$ be a complete, separable, and geodesic space.
Then for any $\tau\in [0,1]$, the map $\mathrm{e}^\tau_\sharp:\mathcal{P}(\mathcal{G}(Z))\to \mathcal{P}(Z)$ is both weakly and $\mk_p^{\mathcal{G}(Z)}$-to-$\mk_p^Z$ continuous.
In particular, if $(\Gamma_\ell)_{\ell\in\N}$ converges to $\Gamma$ with respect to $\mk_p^{\mathcal{G}(Z)}$, the sequence 
$(\mathrm{e}^\tau_\sharp \Gamma_\ell)_{\ell\in \mathbb{N}}$ converges to $\mathrm{e}^\tau_\sharp \Gamma$ with respect to~$\mk_p^Z$.
\end{lemma}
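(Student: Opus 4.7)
The plan is to exploit the observation that for each fixed $\tau\in [0,1]$, the evaluation map $\mathrm{e}^\tau:\mathcal{G}(Z)\to Z$ is $1$-Lipschitz; indeed, for any $\rho_1,\rho_2\in\mathcal{G}(Z)$,
\[
\dist_Z(\mathrm{e}^\tau(\rho_1),\mathrm{e}^\tau(\rho_2))=\dist_Z(\rho_1(\tau),\rho_2(\tau))\leq\sup_{s\in[0,1]}\dist_Z(\rho_1(s),\rho_2(s))=\dist_{\mathcal{G}(Z)}(\rho_1,\rho_2).
\]
In particular $\mathrm{e}^\tau$ is continuous, so weak continuity of $\mathrm{e}^\tau_\sharp$ is immediate: given $\phi\in C_b(Z)$ we have $\phi\circ\mathrm{e}^\tau\in C_b(\mathcal{G}(Z))$, and weak convergence $\Gamma_\ell\rightharpoonup\Gamma$ on $\mathcal{G}(Z)$ yields $\int_Z\phi\,d(\mathrm{e}^\tau_\sharp\Gamma_\ell)=\int_{\mathcal{G}(Z)}\phi\circ\mathrm{e}^\tau\,d\Gamma_\ell\to\int_{\mathcal{G}(Z)}\phi\circ\mathrm{e}^\tau\,d\Gamma=\int_Z\phi\,d(\mathrm{e}^\tau_\sharp\Gamma)$.

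For the $\mk_p^{\mathcal{G}(Z)}$-to-$\mk_p^Z$ continuity, I will apply the equivalence of characterizations of $\mk_p$ convergence given in Theorem~\ref{thm: wassconv}. Fix a basepoint $z_0\in Z$ and let $\rho_{z_0}\in\mathcal{G}(Z)$ denote the constant geodesic at $z_0$. From the $1$-Lipschitz property above, applied with $\rho_2=\rho_{z_0}$,
\[
\dist_Z(z_0,\rho(\tau))^p\leq\dist_{\mathcal{G}(Z)}(\rho_{z_0},\rho)^p,
\]
so $\rho\mapsto\dist_Z(z_0,\rho(\tau))^p$ is a continuous function on $\mathcal{G}(Z)$ dominated by $1+\dist_{\mathcal{G}(Z)}(\rho_{z_0},\cdot)^p$, and in particular $\mathrm{e}^\tau_\sharp$ sends $\mathcal{P}_p(\mathcal{G}(Z))$ into $\mathcal{P}_p(Z)$.

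Now suppose $\Gamma_\ell\to\Gamma$ in $\mk_p^{\mathcal{G}(Z)}$. By Theorem~\ref{thm: wassconv}, $\Gamma_\ell\rightharpoonup\Gamma$ weakly, so from the first step $\mathrm{e}^\tau_\sharp\Gamma_\ell\rightharpoonup\mathrm{e}^\tau_\sharp\Gamma$ weakly as well. Moreover, the fourth characterization in Theorem~\ref{thm: wassconv} applied to the continuous function $\phi(\rho):=\dist_Z(z_0,\rho(\tau))^p$, which satisfies $|\phi|\leq 1+\dist_{\mathcal{G}(Z)}(\rho_{z_0},\cdot)^p$, gives
\[
\int_Z\dist_Z(z_0,z)^p\,d(\mathrm{e}^\tau_\sharp\Gamma_\ell)(z)=\int_{\mathcal{G}(Z)}\phi\,d\Gamma_\ell\;\longrightarrow\;\int_{\mathcal{G}(Z)}\phi\,d\Gamma=\int_Z\dist_Z(z_0,z)^p\,d(\mathrm{e}^\tau_\sharp\Gamma)(z).
\]
Combining weak convergence with convergence of the $p$th moment about $z_0$, the second characterization in Theorem~\ref{thm: wassconv} yields $\mathrm{e}^\tau_\sharp\Gamma_\ell\to\mathrm{e}^\tau_\sharp\Gamma$ in $\mk_p^Z$, which is the desired conclusion.

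There is no real obstacle here: the entire argument rests on the $1$-Lipschitz property of $\mathrm{e}^\tau$ (which is built into the definition of $\dist_{\mathcal{G}(Z)}$ as a supremum metric) together with the equivalent characterizations of $\mk_p$ convergence already recalled in Theorem~\ref{thm: wassconv}. The only point requiring minor care is that $\phi\circ\mathrm{e}^\tau$ must lie in the class of admissible test functions in the fourth characterization, which is exactly what the Lipschitz estimate provides.
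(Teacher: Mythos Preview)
Your proof is correct and follows essentially the same approach as the paper: both establish weak continuity of $\mathrm{e}^\tau_\sharp$ via $\phi\circ\mathrm{e}^\tau\in C_b(\mathcal{G}(Z))$, and then upgrade to $\mk_p$ convergence using one of the equivalent characterizations in Theorem~\ref{thm: wassconv}. The only cosmetic difference is which characterization is invoked---you use convergence of $p$th moments (second and fourth items) via the $1$-Lipschitz bound $\dist_Z(z_0,\rho(\tau))^p\leq\dist_{\mathcal{G}(Z)}(\rho_{z_0},\rho)^p$, while the paper uses the tail condition (third item) via the containment $\{\rho:\dist_Z(z_0,\rho(\tau))\geq r\}\subset\mathcal{G}(Z)\setminus B_r^{\mathcal{G}(Z)}(\rho_{z_0})$; these are two sides of the same inequality.
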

\begin{proof}
Let $(\Gamma_\ell)_{\ell\in\mathbb{N}}$ be a weakly convergent sequence in $\mathcal{P}(\mathcal{G}(Z))$ with limit $\Gamma$.
For $\phi\in C_b(Z)$, we have $\phi \circ \mathrm{e}^\tau \in C_b(\mathcal{G}(Z))$ and 
\begin{align*}
\lim_{\ell\to \infty} \int_Z \phi(t) d \mathrm{e}^\tau_\sharp \Gamma_\ell(t)
&=
\lim_{\ell\to \infty} \int_{\mathcal{G}(Z)} \phi(  \mathrm{e}^\tau(\rho)) d \Gamma_\ell(\rho)
=
\int_{\mathcal{G}(Z)} \phi(  \mathrm{e}^\tau(\rho)) d \Gamma(\rho)
=\int_Z \phi(t) d \mathrm{e}^\tau_\sharp \Gamma(t),
\end{align*}
which shows weak continuity of $\mathrm{e}^\tau_\sharp$. Now if $(\Gamma_\ell)_{\ell\in \N}$ converges to $\Gamma$ in $\mk_p^{\mathcal{G}(Z)}$, the above implies $(\mathrm{e}^\tau_\sharp \Gamma_\ell)_{\ell\in \N}$ converges weakly to $\mathrm{e}^\tau_\sharp \Gamma$. Then if $\rho_0\in \mathcal{G}(Z)$ is identically $z_0\in Z$, by Theorem~\ref{thm: wassconv}
\begin{align*}
    \varlimsup_{\ell\to\infty}\int_{Z\setminus B_r^Z(z_0)}\dist_Z(z_0, z)^pd\mathrm{e}^\tau_\sharp\Gamma_\ell(z)
&=\varlimsup_{\ell\to\infty}\int_{\mathcal{G}(Z)}\dist_Z(z_0, \rho(\tau))^p\mathds{1}_{Z\setminus B_r^Z(z_0)}(\rho(\tau))d\Gamma_\ell(\rho)\\
    &\leq \varlimsup_{\ell\to\infty}\int_{\mathcal{G}(Z)\setminus B_r^{\mathcal{G}(Z)}(\rho_0)}\dist_{\mathcal{G}(Z)}(\rho_0, \rho)^pd\Gamma_\ell(\rho)
    \xrightarrow{r\to\infty}0,
\end{align*}
hence by another application of Theorem~\ref{thm: wassconv} we see $(\mathrm{e}^\tau_\sharp \Gamma_\ell)_{\ell\in \mathbb{N}}$ converges to $\mathrm{e}^\tau_\sharp \Gamma$ in $\mk_p^Z$.
\end{proof}
We are now ready to prove Theorem~\ref{thm: main disint}~\eqref{thm: disint geodesic}.
\begin{proof}[Proof of Theorem~\ref{thm: main disint}~\eqref{thm: disint geodesic}]
Recall we assume that $(Y, \dist_Y)$ is a geodesic space that is ball convex with respect to some $y_0\in Y$. If $p=1$, it is easy to see that $((1-\tau)\mathfrak{m}_0+\tau\mathfrak{m}_1)_{\tau\in [0,1]}$ is a minimal geodesic with respect to $\scrmk_{1, q}$ for any $1\leq q\leq \infty$ (see for example~\cite{KitagawaTakatsu24a}*{Lemma 2.10} (the result there is on $\mathcal{P}_1(\R^n)$, but the exact same proof holds for general $Y$) thus we assume $p>1$.

As previously mentioned, $(\mathcal{P}_p(Y^2), \mk_p^{Y^2})$ is a complete, separable metric space.
For $t$, $s\in Y$, 
since we have
\[
\dist_Y(t, s)^p
=\left(\dist_Y(t, s)^2\right)^{\frac{p}{2}}
\leq 2^{\frac{p}{2}}(\dist_{y_0}(t)^2+\dist_{y_0}(s)^2)^{\frac{p}{2}} 
=2^{\frac{p}{2}}\dist_{Y^2}((y_0, y_0), (t,s))^p,
\]
Theorem~\ref{thm: wassconv} yields that 
the function $\mathcal{C}(\gamma)\coloneqq \left\|\dist_Y^p\right\|_{L^1(\gamma)}$ on $(\mathcal{P}_p(Y^2),\mk_p^{Y^2})$
is continuous.

Now there is a set   $\Omega'\in \mathcal{B}_\sigma$ of full $\sigma$ measure so that $\mathfrak{m}_0^\omega$, $\mathfrak{m}_1^\omega\in\mathcal{P}_p(\pi^{-1}(\{\omega\}))$ for all $\omega\in \Omega'$.
For $i=1$, $2$, let us write
\begin{align*}
    \mu_i^\omega\coloneqq \sum_{j\in \N}\chi_j(\omega)(\Xi^{-1}_{j, \omega})_\sharp\mathfrak{m}_i^\omega
\end{align*} 
which belongs to $\mathcal{P}_p(Y)$ for $\omega\in \Omega'$. 
Now define 
$F:\Omega\to 2^{\mathcal{P}_p(\mathcal{G}(Y))}$ by 
\begin{align*}
    F(\omega)\coloneqq \left\{\Gamma\in \mathcal{P}_p(\mathcal{G}(Y))
\bigm|
\text{$\mathrm{e}^\bullet_\sharp \Gamma$ is an $\mk_p^Y$ minimal geodesic
from $\mu_0^\omega$ to $\mu_1^\omega$}
\right\};
\end{align*}
note that if $\Gamma\in F(\omega)$
then $(\mathrm{e}^0\times\mathrm{e}^1)_\sharp\Gamma\in \Pi(\mu_0^\omega, \mu_1^\omega)$ is a $p$-optimal coupling 
by~\cite{Villani09}*{Corollary 7.22}.

We now show that $F$ satisfies the hypotheses of the Kuratowski and Ryll-Nardzewski selection theorem, Theorem~\ref{measurableselection}.

\noindent
{\bf Claim~1.} The set $F(\omega)$ is nonempty and closed for $\sigma$-a.e. $\omega$.\\
{\it Proof of Claim~$1$.}
By Proposition~\ref{prop: geodesics}, for any $\omega \in \Omega'$ there is a $\Gamma\in \mathcal{P}(\mathcal{G}(Y))$ such that $\mathrm{e}^\bullet_\sharp \Gamma$ is a minimal geodesic from $\mu_0^\omega$ to $\mu_1^\omega$. 
Additionally, if $\rho_0\in \mathcal{G}(Y)$ is 
identically equal to $y_0$, since Proposition~\ref{prop: geodesics} also yields that $(\mathrm{e}^0\times \mathrm{e}^1)_\sharp\Gamma$ is a $p$-optimal coupling between $\mu_0^\omega$ and $\mu_1^\omega$, we have 
\begin{align*}
    \int_{\mathcal{G}(Y)} \dist_{\mathcal{G}(Y)}(\rho,\rho_0)^pd\Gamma(\rho)
&=  \int_{\mathcal{G}(Y)} \left(\sup_{\tau\in [0, 1]} \dist_Y( \rho(\tau), \rho_0(\tau)) \right)^pd\Gamma(\rho)\\    
&\leq 2^{p-1}\int_{\mathcal{G}(Y)} \sup_{\tau\in [0, 1]}\left(\dist_Y(\rho(0),y_0)^p+\dist_Y(\rho(0), \rho(\tau))^p\right)d\Gamma(\rho)\\
    &=2^{p-1}\int_{\mathcal{G}(Y)} \sup_{\tau\in [0, 1]}\left(\dist_{y_0}(\rho(0))^p+\tau^p \dist_Y(\rho(0), \rho(1))^p\right)d\Gamma(\rho)\\
    &= 2^{p-1}\int_Y\dist_{y_0}(t)^pd\mathrm{e}^0_\sharp\Gamma(t)+2^{p-1}\int_{Y^2}\dist_Y(t, s)^pd(\mathrm{e}^0\times \mathrm{e}^1)_\sharp\Gamma(t, s)\\
    &=2^{p-1}\int_Y\dist_{y_0}(t)^pd\mu_0^\omega(t)+2^{p-1}\mk_p^Y(\mu_0^\omega, \mu_1^\omega)\\
    &<\infty,
\end{align*}
hence $\Gamma\in \mathcal{P}_p(\mathcal{G}(Y))$, thus we have $F(\omega)\neq \emptyset$.
Now given $\omega\in \Omega'$, 
if $(\Gamma_\ell)_{\ell\in\mathbb{N}}\subset F(\omega)$ converges 
in $(\mathcal{P}_p(\mathcal{G}(Y)),\mk_p^{\mathcal{G}(Y)})$, by Lemma~\ref{lem: pushforward continuity} 
the sequence $(\mathrm{e}^\tau_\sharp\Gamma_\ell)_{\ell\in\mathbb{N}}$ converges to $\mathrm{e}^\tau_\sharp\Gamma$ in $\mk^Y_p$ for each $\tau\in [0, 1]$. Thus for $\tau_1$, $\tau_2\in [0, 1]$ we have
\begin{align*}
    \mk_p^Y(\mathrm{e}^{\tau_1}_\sharp \Gamma, \mathrm{e}^{\tau_2}_\sharp \Gamma)
    &=\lim_{\ell\to\infty} \mk_p^Y(\mathrm{e}^{\tau_1}_\sharp \Gamma_\ell, \mathrm{e}^{\tau_2}_\sharp \Gamma_\ell)\\
    &=\lim_{\ell\to\infty} \lvert \tau_1-\tau_2\rvert\mk_p^Y(\mathrm{e}^0_\sharp \Gamma_\ell, \mathrm{e}^1_\sharp \Gamma_\ell)
    =\lvert \tau_1-\tau_2\rvert\mk_p^Y(\mathrm{e}^0_\sharp \Gamma, \mathrm{e}^1_\sharp \Gamma),
\end{align*}
hence $\Gamma\in F(\omega)$; in other words $F(\omega)$ is closed in 
$(\mathcal{P}_p(\mathcal{G}(Y)),  \mk_p^{\mathcal{G}(Y)})$.
\hfill $\diamondsuit$

\noindent
{\bf Claim~2.} $F$ is $\mathcal{B}_\sigma$-weakly measurable.\\
{\it Proof of Claim~$2$.}
For $\Gamma \in \mathcal{P}_p(\mathcal{G}(Y))$, 
define $\Phi_\Gamma: \Omega'\to \mathbb{R}^3$ by 
\begin{align*}
\Phi_\Gamma(\omega)
&\coloneqq\left(
\mk_p^Y\left(\mathrm{e}^0_\sharp\Gamma, \mu_0^\omega\right)^p,
\mk_p^Y\left(\mathrm{e}^1_\sharp\Gamma, \mu_1^\omega\right)^p,
\left| \mathcal{C}\left((\mathrm{e}^0\times \mathrm{e}^1)_\sharp\Gamma\right)-\mk_p^Y(\mu_0^\omega, \mu_1^\omega)^p\right|
\right).
\end{align*}
We see $\Phi_\Gamma$ is $\mathcal{B}_\sigma$-measurable by combining Lemma~\ref{lem: pushforward measurable} and \cite{AmbrosioGigliSavare08}*{Lemma 12.4.7}. 
Since $(\mathcal{G}(Y), \dist_{\mathcal{G}(Y)})$ is complete and separable, the space  $(\mathcal{P}_p(\mathcal{G}(Y)), \mk_p^{\mathcal{G}(Y)})$ is complete and separable. Fix a closed set $K$ in $(\mathcal{P}_p(\mathcal{G}(Y)), \mk_p^{\mathcal{G}(Y)})$, then
there exists a countable set $\{\Gamma_\ell\}_{\ell\in \mathbb{N}}$ that is $\mk_p^{\mathcal{G}(Y)}$-dense in~$K$. 
Set 
\begin{align*}
B&\coloneqq \bigcap_{\widetilde{m}=1}^\infty 
\bigcup_{\ell=1}^\infty 
\Phi_{\Gamma_\ell}^{-1}\left(\left[0,\widetilde{m}^{-1}\right)^3\right),\qquad
\Omega_K\coloneqq \{ \omega \in \Omega'\mid F(\omega) \cap K \neq \emptyset  \},
\end{align*}
by the $\mathcal{B}_\sigma$-measurability of each $\Phi_{\Gamma_\ell}$, 
we find $B\in \mathcal{B}_\sigma$. 
We will now show that $\Omega_K=B$.

If $\omega\in \Omega_K$, there exists $\Gamma\in F(\omega) \cap K$,
and a sequence $(\Gamma_{\ell_m})_{m\in \N}$ taken from $(\Gamma_\ell)_{\ell\in \mathbb{N}}$ that converges to $\Gamma$ with respect to $\mk_p^{\mathcal{G}(Y)}$. Then by Lemma~\ref{lem: pushforward continuity}, the sequence $(\mathrm{e}^i_\sharp \Gamma_{\ell_m})_{m\in \N}$ converges in $\mk_p^Y$ to $\mu_i^\omega=\mathrm{e}^i_\sharp \Gamma$, for $i=0$, $1$.
Similarly, the convergence of $(\Gamma_{\ell_m})_{m \in \mathbb{N}}$ to $\Gamma$ in $\mk_p^{\mathcal{G}(Y)}$ implies convergence of $((\mathrm{e}^0\times \mathrm{e}^1)_\sharp \Gamma_{\ell_m})_{m \in \mathbb{N}}$ to $(\mathrm{e}^0\times \mathrm{e}^1)_\sharp \Gamma$ in $\mk_p^{Y^2}$, hence the continuity of $\mathcal{C}$ implies that
\begin{align*}
\lim_{m\to\infty}\left| \mathcal{C}((\mathrm{e}^0\times \mathrm{e}^1)_\sharp\Gamma_{\ell_m})-\mk_p^Y\left(\mu_0^\omega, \mu_1^\omega\right)^p\right|
&=\lim_{m\to\infty}\left|\mathcal{C}((\mathrm{e}^0\times \mathrm{e}^1)_\sharp\Gamma_{\ell_m})-\mathcal{C}((\mathrm{e}^0\times \mathrm{e}^1)_\sharp\Gamma)\right|
=0.
\end{align*}
Thus for any $\widetilde{m}\in \N$, if $m$ is sufficiently large, we have 
$\Phi_{\Gamma_{\ell_m}}(\omega)\in [0,\widetilde{m}^{-1})^3$
which yields $\omega \in B$.

Now assume $\omega\in B$. 
For each $\widetilde{m}\in \mathbb{N}$,
there is $\ell(\widetilde{m})\in \mathbb{N}$ such that 
$\Phi_{\Gamma_{\ell(\widetilde{m})}}(\omega)\in [0,\widetilde{m}^{-1})^3$, that is,
\begin{align}
\begin{split}\label{F}
&\mk_p^Y(\mathrm{e}^0_\sharp\Gamma_{\ell(\widetilde{m})}, \mu_0^\omega)^p<\widetilde{m}^{-1},\qquad
\mk_p^Y(\mathrm{e}^1_\sharp\Gamma_{\ell(\widetilde{m})}, \mu_1^\omega)^p<\widetilde{m}^{-1}, \\
&\left|\mathcal{C}((\mathrm{e}^0\times \mathrm{e}^1)_\sharp\Gamma_{\ell(\widetilde{m})})-\mk_p^Y(\mu_0^\omega, \mu_1^\omega)^p\right|<\widetilde{m}^{-1}. 
\end{split}
\end{align}
As the sets $\{\mathrm{e}^0_\sharp\Gamma_{\ell(\widetilde{m})}\}_{\widetilde{m}\in \N}\cup\{ \mu_0^\omega\}$ and $\{\mathrm{e}^1_\sharp\Gamma_{\ell(\widetilde{m})}\}_{\widetilde{m}\in \N}\cup\{ \mu_1^\omega\}$ are compact in $(\mathcal{P}_p(Y), \mk_p^Y)$, by \cite{Villani09}*{Corollary 7.22} there exists a subsequence of $(\Gamma_{\ell(\widetilde{m})})_{\widetilde{m}\in \N}$ (not relabeled) that converges weakly to some $\Gamma\in\mathcal{P}(\mathcal{G}(Y))$. 
Since $(Y, \dist_Y)$ is ball convex with respect to $y_0$, and $\rho_0\in \mathcal{G}(Y)$ is identically~$y_0$,
\begin{align*}
&\varlimsup_{\widetilde{m}\to\infty}\int_{\mathcal{G}(Y)\setminus B_r^{\mathcal{G}(Y)}(\rho_0)}\dist_{\mathcal{G}(Y)}(\rho, \rho_0)^pd\Gamma_{\ell(\widetilde{m})}(\rho)\\
&\leq 
\varlimsup_{\widetilde{m}\to\infty}\int_{\{\rho\in \mathcal{G}(Y)\mid \max_{i=0,1}\dist_{y_0}(\rho(i))\geq r\}}\left(\max_{i=0,1}\dist_{y_0}\rho(i))\right)^pd\Gamma_{\ell(\widetilde{m})}(\rho)\\
&\leq \varlimsup_{\widetilde{m}\to\infty}\int_{\{\rho\in \mathcal{G}(Y)\mid \dist_{y_0}(\rho(0))\geq r\}}\dist_{y_0}(\rho(0))^pd\Gamma_{\ell(\widetilde{m})}(\rho)+\varlimsup_{\widetilde{m}\to\infty}\int_{\{\rho\in \mathcal{G}(Y)\mid \dist_{y_0}(\rho(1))\geq r\}}\dist_{y_0}(\rho(1))^pd\Gamma_{\ell(\widetilde{m})}(\rho)\\
&= \varlimsup_{\widetilde{m}\to\infty}\int_{Y\setminus B_r^Y(y_0)}\dist_{y_0}(t)^pd\mathrm{e}^0_\sharp\Gamma_{\ell(\widetilde{m})}(t)+\varlimsup_{\widetilde{m}\to\infty}\int_{Y\setminus B_r^Y(y_0)}\dist_{y_0}(t)^pd\mathrm{e}^1_\sharp\Gamma_{\ell(\widetilde{m})}(t)
\xrightarrow{r\to \infty} 0
\end{align*}
by~\eqref{F} and Theorem~\ref{thm: wassconv}, hence $\Gamma_{\ell(\widetilde{m})}\to \Gamma$ in $\mk_p^{\mathcal{G}(Y)}$ as $\widetilde{m}\to\infty$. Since $K$ is $\mk_p^{\mathcal{G}(Y)}$-closed, this implies $\Gamma\in K$. From~\eqref{F} we see $(\mathrm{e}^0\times \mathrm{e}^1)_\sharp\Gamma$ is a $p$-optimal coupling between $\mu_0^\omega$ and $\mu_1^\omega$, hence from Proposition~\ref{prop: geodesics} we have that $\Gamma\in F(\omega)$. Thus $\omega \in \Omega_K$, proving $\Omega_K=B\in \mathcal{B}_\sigma$, and in particular $F$ is $\mathcal{B}_\sigma$-weakly measurable.
\hfill $\diamondsuit$

As mentioned previously  $(\mathcal{P}_p(\mathcal{G}(Y)), \mk_p^{\mathcal{G}(Y)})$ is complete and separable, hence we can apply Theorem~\ref{measurableselection}, to find a $\mathcal{B}_\sigma$-measurable selection $\Gamma_\bullet:\Omega\to \mathcal{P}_p(\mathcal{G}(Y))$ of~$F$, defined $\sigma$-a.e.  
By Lemma~\ref{lem: pushforward continuity}, as the composition of a continuous map $e^\tau_\sharp$ with an $\mathcal{B}_\sigma$-measurable map $\Gamma_\bullet$, the map 
$\mathrm{e}^\tau_\sharp \Gamma_\bullet: \Omega\to \mathcal{P}_p(Y)$ is $\mathcal{B}_\sigma$-measurable for each $\tau\in[0, 1]$.

Thus we can argue again as in Remark~\ref{V58} to see the linear functional
\begin{align*}
    \mathfrak{m}_\tau(A)\coloneqq \int_\Omega \left(\sum_{j\in\N}\chi_j(\omega) (\Xi_{j, \omega})_\sharp\mathrm{e}^\tau_\sharp \Gamma_\omega(A)\right) d\sigma(\omega)
\end{align*}
is a nonnegative probability measure on $E$, and whose disintegration satisfies $\sigma$-a.e.,
\begin{align*}
    \mathfrak{m}_\tau^\bullet=\sum_{j\in\N}\chi_j(\Xi_{j, \bullet})_\sharp\mathrm{e}^\tau_\sharp \Gamma_\bullet.
\end{align*} 
Now fix $0\leq \tau_1<\tau_2\leq 1$. 
By the construction of $\Gamma_\bullet$,  
\begin{align*}
    \scrmk_{p, q}(\mathfrak{m}_{\tau_1}, \mathfrak{m}_{\tau_2})
    &=\left\| \mk_p^E\left(\sum_{j\in\N}\chi_j(\Xi_{j, \bullet})_\sharp\mathrm{e}^{\tau_1}_\sharp \Gamma_\bullet, \sum_{j'\in\N}\chi_{j'}(\Xi_{j', \bullet})_\sharp\mathrm{e}^{\tau_2}_\sharp \Gamma_\bullet\right)\right\|_{L^q(\sigma)}\\
    &\leq \left\| \sum_{j\in\N}\chi_j\mk_p^E((\Xi_{j, \bullet})_\sharp\mathrm{e}^{\tau_1}_\sharp \Gamma_\bullet, (\Xi_{j, \bullet})_\sharp\mathrm{e}^{\tau_2}_\sharp \Gamma_\bullet)\right\|_{L^q(\sigma)}
    =\left\| \sum_{j\in\N}\chi_j\mk_p^Y(\mathrm{e}^{\tau_1}_\sharp \Gamma_\bullet, \mathrm{e}^{\tau_2}_\sharp \Gamma_\bullet)\right\|_{L^q(\sigma)}\\
    &=\left| \tau_1-\tau_2\right|\left\| \mk_p^Y\left(\sum_{j\in\N}\chi_j(\Xi^{-1}_{j, \bullet})_\sharp\mathfrak{m}_0^\bullet, \sum_{j'\in\N}\chi_{j'}(\Xi^{-1}_{j', \bullet})_\sharp\mathfrak{m}_1^\bullet \right)\right\|_{L^q(\sigma)}\\
    &\leq\left| \tau_1-\tau_2\right|\left\| \sum_{j\in\N}\chi_j\mk_p^Y((\Xi^{-1}_{j, \bullet})_\sharp\mathfrak{m}_0^\bullet, (\Xi^{-1}_{j, \bullet})_\sharp\mathfrak{m}_1^\bullet )\right\|_{L^q(\sigma)}\\\
    &=\left| \tau_1-\tau_2\right|\left\| \mk_p^E(\mathfrak{m}_0^\bullet, \mathfrak{m}_1^\bullet)\right\|_{L^q(\sigma)}
    =\left| \tau_1-\tau_2\right| \scrmk_{p, q}(\mathfrak{m}_0, \mathfrak{m}_1).
\end{align*}
Due to the triangle inequality, the above is enough to conclude equality holds in Definition~\ref{def: geodineq} of minimal geodesic. Finally, we see for any $\tau\in [0, 1]$,
\begin{align*}
    \scrmk_{p, q}(\delta^\omega_{E, y_0}\otimes\sigma, \mathfrak{m}_\tau)
    &\leq \scrmk_{p, q}(\delta^\omega_{E,y_0}\otimes\sigma, \mathfrak{m}_0)+\scrmk_{p, q}(\mathfrak{m}_0, \mathfrak{m}_\tau)\\
    &\leq \scrmk_{p, q}(\delta^\omega_{E,y_0}\otimes\sigma, \mathfrak{m}_0)+\tau\scrmk_{p, q}(\mathfrak{m}_0, \mathfrak{m}_1)\\
    &<\infty,
\end{align*}
hence $\mathfrak{m}_\tau\in \mathcal{P}^\sigma_{p, q}(E)$. Thus $\tau\mapsto \mathfrak{m}_\tau$ is a minimal geodesic with respect to~$\scrmk_{p, q}$.
\end{proof}
\subsection{Duality}
We now work toward a duality result for disintegrated Monge--Kantorovich metrics.
%

We begin by showing the space $\mathcal{X}_p$ is well-defined.
\begin{lemma}\label{lem: spaces well defined}
 The space $\mathcal{X}_p$ is a Banach space, independent of the choices of $\{U_j\}_{j\in \N}$, $\{\Xi_j\}_{j\in \N}$, $\{\chi_j\}_{j\in \N}$, and $y_0\in Y$, and the associated norm $\left\| \cdot\right\|_{\mathcal{X}_p}$ will be bi-Lipschitz equivalent under a different choice of the above.
\end{lemma}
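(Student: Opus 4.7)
The plan is to proceed in three steps: first verify that the weight function $u\mapsto 1+\dist^p_{E, y_0}(\pi(u), u)$ is continuous and strictly positive on $E$; second, identify $(\mathcal{X}_p, \|\cdot\|_{\mathcal{X}_p})$ as isometrically isomorphic to $C_0(E)$, which grants the Banach space structure essentially for free; and third, establish that the weight functions coming from two different choices of auxiliary data are comparable by multiplicative constants.

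For continuity, local finiteness of $\{U_j\}_{j\in \N}$ combined with continuity of $\pi:E\to \Omega$ implies each point of $E$ has a neighborhood on which only finitely many $\chi_j\circ \pi$ are nonzero. On such a neighborhood $\dist^p_{E,y_0}(\pi(\cdot),\cdot)$ is a finite sum of functions $u\mapsto \chi_j(\pi(u)) \dist_E(\Xi_j(\pi(u),y_0),u)^p$, each a composition or product of continuous maps. For the Banach space claim, I will define the linear map $T:\mathcal{X}_p\to C_0(E)$ by $(T\xi)(u):=\xi(u)/(1+\dist^p_{E,y_0}(\pi(u),u))$. By the very definition of $\mathcal{X}_p$, $T$ lands in $C_0(E)$, and it is an isometry when $\mathcal{X}_p$ carries $\|\cdot\|_{\mathcal{X}_p}$ and $C_0(E)$ the supremum norm. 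Surjectivity is immediate: given $\eta\in C_0(E)$, the function $u\mapsto \eta(u)(1+\dist^p_{E,y_0}(\pi(u),u))$ belongs to $\mathcal{X}_p$ by the continuity established in Step~1. Completeness of $\mathcal{X}_p$ then follows at once from that of $C_0(E)$.

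For independence of the data, let $\{\widetilde{U}_{j'}\}_{j'\in\N}$, $\{\widetilde{\Xi}_{j'}\}_{j'\in\N}$, $\{\tilde{\chi}_{j'}\}_{j'\in\N}$, $\tilde{y}_0\in Y$ be another admissible choice, and denote the associated weight by $\widetilde{\dist}^p_{E,\tilde{y}_0}$. For $\omega\in U_j\cap \widetilde{U}_{j'}$ the transition map provides $\gamma_j^{j'}(\omega)\in G$ with $\widetilde{\Xi}_{j',\omega}^{-1}(\Xi_{j,\omega}(y_0))=\gamma_j^{j'}(\omega)y_0$. Assumption~\eqref{eqn: bounded orbits} supplies $R_0:=\sup_{g\in G}\dist_Y(gy_0,y_0)<\infty$, whence
\[
\dist_E(\widetilde{\Xi}_{j',\omega}(\tilde{y}_0),\Xi_{j,\omega}(y_0))=\dist_Y(\tilde{y}_0,\gamma_j^{j'}(\omega)y_0)\leq R_0+\dist_Y(y_0,\tilde{y}_0)=:R
\]
uniformly in $\omega$ and the indices. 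Applying the triangle inequality for $\dist_E$, the convexity inequality $(a+b)^p\leq 2^{p-1}(a^p+b^p)$, then averaging against $\chi_j(\omega)$ and $\tilde{\chi}_{j'}(\omega)$ and using that both families are partitions of unity, I obtain
\[
\widetilde{\dist}^p_{E,\tilde{y}_0}(\pi(u),u)\leq 2^{p-1}\bigl(R^p+\dist^p_{E,y_0}(\pi(u),u)\bigr)\quad \text{for all }u\in E,
\]
and swapping the roles gives the reverse estimate. Thus the two weights are pointwise comparable by positive multiplicative constants, which immediately yields the bi-Lipschitz equivalence of the norms. The equality of the sets then follows because pointwise comparability of the weights preserves both continuity of $\xi/(1+\cdot)$ and its vanishing at infinity: $\varepsilon$-sublevel sets of one ratio are contained in appropriately rescaled sublevel sets of the other, hence are closed subsets of compact sets.

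The main technical point is the uniform estimate in the third step; once assumption~\eqref{eqn: bounded orbits} is unpacked via the $G$-action on the transition maps, the required comparability reduces to the elementary bookkeeping above.
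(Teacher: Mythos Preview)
Your proof is correct and follows essentially the same route as the paper's: the core independence argument---bounding the cross term $\dist_E(\widetilde{\Xi}_{j',\omega}(\tilde y_0),\Xi_{j,\omega}(y_0))$ via the transition element $\gamma_j^{j'}(\omega)\in G$ and assumption~\eqref{eqn: bounded orbits}, then combining with the convexity inequality $(a+b)^p\le 2^{p-1}(a^p+b^p)$ and the partition-of-unity averaging---is identical to the paper's. Your proof is in fact more complete: the paper's argument establishes only the pointwise weight comparison and leaves the Banach space claim and the preservation of the $C_0$ condition implicit, whereas you spell out the isometry with $C_0(E)$ and the sublevel-set argument explicitly.
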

\begin{proof}
    Again let $\{\widetilde{U}_j\}_{j\in \N}$, $\{\widetilde{\Xi}_j\}_{j\in \N}$, $\{\tilde{\chi}_j\}_{j\in \N}$, $\tilde{y}_0\in Y$, $\dist_{E,\tilde{y}_0}^p$ be alternate choices of the relevant objects. 
For each $\omega \in U_j\cap U_{j'}$ with $j,j'\in\mathbb{N}$,
there exists $\gamma_j^{j'}(\omega)\in G$ such that
\begin{align*}
\widetilde{\Xi}_{j',\omega}^{-1}(\Xi_{j,\omega}(y))=\gamma_j^{j'}(\omega) y
\end{align*}
for $y\in Y$.
Then for any $u\in E$ and $\omega\in \Omega$, we have
\begin{align*}
\dist_{E,\tilde{y}_0}^p(\omega, u) 
&
=\sum_{j\in \mathbb{N}}
\chi_j(\omega)
\dist_{E,\tilde{y}_0}^p(\omega, u)\\
&
\leq
2^{p-1}\sum_{j\in \mathbb{N}} \chi_j(\omega)
\left( 
\dist_{E,\tilde{y}_0}^p(\omega, \Xi_{j,\omega}(y_0)  ) 
+
\dist_E(\Xi_{j,\omega}(y_0),u)^p\right)\\
&\leq 2^{p-1}\sum_{j, j'\in \N}\tilde{\chi}_j(\omega)\chi_{j'}(\omega)\dist_E(\widetilde{\Xi}_{j', \omega}(\tilde{y}_0), \Xi_{j, \omega}(y_0))^p
+2^{p-1}\dist_{E,y_0}^p(\omega,u)\\
&\leq 2^{p-1}\sum_{j, j'\in \N}\tilde{\chi}_j(\omega)\chi_{j'}(\omega)\dist_Y(\tilde{y_0}, \gamma_j^{j'}(\omega)y_0)^p
+2^{p-1}\dist_{E,y_0}^p(\omega,u).
\end{align*}
    The first term above is bounded independent of $u$ and $\omega$ (depending only on $y_0$ and $\tilde{y}_0$) by~\eqref{eqn: bounded orbits}, hence there is some constant $C>0$ such that 
    \begin{align*}
        1+
       \dist_{E,\tilde{y}_0}^p(\pi(u), u)
\leq C(1+\dist_{E,y_0}^p(\pi(u), u)),
    \end{align*}
for all $u\in E$, which proves the lemma.
\end{proof}
Next we define a subspace of $C(Y)$ 
assuming $(Y,\dist_Y)$ is locally compact, by 
\begin{align*}
\mathcal{Y}_p
\coloneqq \left\{ \phi \in C(Y)
\Bigm|
\frac{\phi (t) }{1+\dist_{Y}(y_0,t)^p} \in C_0(Y)
\text{ for some (hence all) $y_0\in Y$}
\right\}
\end{align*}
equipped with the norm defined by 
\[
\left\| \phi \right\|_{\mathcal{Y}_p,y_0}\coloneqq \sup_{t\in Y}\left|\frac{\phi (t) }{1+\dist_Y(y_0,t)^p} \right|
\quad \text{ for } \phi\in C(Y).
\]
Since all $(\mathcal{Y}_p, \left\| \cdot \right\|_{\mathcal{Y}_p,y_0})$ for $y_0\in Y$
are equivalent to each other, 
we simply denote this normed space by $\mathcal{Y}_p$ and write the norm as $\left\| \cdot \right\|_{\mathcal{Y}_p}$ with the convention that we have fixed some $y_0\in Y$, 
when there is no possibility of confusion. It is easy to see that $(\mathcal{Y}_p, \|\cdot\|_{\mathcal{Y}_p})$ is a Banach space. 

We now recall the \emph{Kantorovich duality} for $\mk_p^X$ on a metric space $(X, \dist_X)$, which will be the basis of a duality theory for $\scrmk_{p, q}$.
\begin{theorem}[\cite{Villani09}*{Theorem~5.10}]\label{Kantorovich}
Let $(X, \dist_X)$ be a complete, separable metric space, and $1\leq p<\infty$, then for $\mu,\nu\in \mathcal{P}(X)$,  
\begin{align*}
\mk_p^X(\mu,\nu)^p
&=\sup
\left\{
-\int_X\phi d\mu-\int_X\psi d\nu
\Biggm|
\begin{tabular}{ll}
$(\phi,\psi)\in C_b(X)^2$,\\
$-\phi(t)-\psi(s) \leq \dist_X(t, s)^p$ 
for $(t,s)\in X^2$
\end{tabular}
\right\}\\
&=\sup
\left\{
-\int_X\psi^{\dist_X^p} d\mu-\int_X\psi d\nu
\Bigm|
\phi\in C_b(X)
\right\}.
\end{align*}
\end{theorem}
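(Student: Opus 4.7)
The plan is to prove Kantorovich duality via a Fenchel--Rockafellar argument, with the $c$-transform version then following as a corollary once we know strong duality holds for general bounded continuous pairs.

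Weak duality is immediate by Fubini: for any $\gamma\in\Pi(\mu,\nu)$ and admissible $(\phi,\psi)\in C_b(X)^2$ with $-\phi(t)-\psi(s)\leq \dist_X(t,s)^p$,
\begin{align*}
-\int_X\phi d\mu-\int_X\psi d\nu
=-\int_{X^2}(\phi(t)+\psi(s))d\gamma(t,s)
\leq \int_{X^2}\dist_X(t,s)^p d\gamma(t,s).
\end{align*}
Taking the infimum over $\gamma$ and the supremum over $(\phi,\psi)$ yields $\text{dual}\leq \text{primal}$.

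For strong duality, the idea is to work in the Banach space $C_b(X\times X)$ and define two convex functions: $\Theta(\xi):=0$ if $\xi(t,s)\geq -\dist_X(t,s)^p$ everywhere and $+\infty$ otherwise, and $\Xi(\xi):=\int_X\phi d\mu+\int_X\psi d\nu$ if $\xi$ admits the representation $\xi(t,s)=\phi(t)+\psi(s)$ for some $(\phi,\psi)\in C_b(X)^2$ and $+\infty$ otherwise. One verifies these are proper convex, that $\Theta$ is continuous at the point $\xi\equiv 1$ where $\Xi$ is finite, and then applies the Fenchel--Rockafellar theorem. The Legendre conjugate of $\Xi$, computed via the Riesz--Markov representation of the dual of $C_b$, picks out nonnegative Radon measures (by a positivity argument) whose marginals are exactly $\mu$ and $\nu$, while the conjugate of $\Theta$ contributes $\int \dist_X^p d\gamma$. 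Matching these identifies the dual quantity with the infimum over $\Pi(\mu,\nu)$ of $\int \dist_X^p d\gamma$.

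The main obstacle is the non-reflexivity and the fact that one is working on $C_b(X)$ rather than $C_0(X)$, so the Riesz representation argument must be handled carefully to ensure the measures arising in the dual formula are genuinely countably additive with the correct marginals; tightness of $\{\mu,\nu\}$ on a complete separable metric space is crucial here. A standard workaround, if one wants to avoid this subtlety, is to first establish the result when $X$ is compact (where Sion's minimax theorem applies directly after noting $\Pi(\mu,\nu)$ is weak-* compact), and then approximate the general case by exhausting $X$ with compact sets via tightness of $\mu,\nu$ and truncating the cost. Once the duality formula with pairs $(\phi,\psi)\in C_b(X)^2$ is established, the second equality in the statement follows by noting that for any admissible $(\phi,\psi)$ the function $\psi^{\dist_X^p}(t):=\sup_s(-\dist_X(t,s)^p-\psi(s))$ satisfies $\phi(t)\geq \psi^{\dist_X^p}(t)$ pointwise (so the substitution can only increase $-\int\phi d\mu$), after checking that $\psi^{\dist_X^p}$ is Borel and $\mu$-integrable (an easy consequence of $\psi\in C_b(X)$ and a truncation-from-below argument to stay within $C_b$ in the supremum).
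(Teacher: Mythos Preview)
The paper does not contain its own proof of this statement: Theorem~\ref{Kantorovich} is simply quoted from \cite{Villani09}*{Theorem~5.10} as a known result, with no argument supplied in the paper. So there is nothing in the paper to compare your sketch against.

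For what it is worth, your outline is essentially the standard Fenchel--Rockafellar proof that appears in Villani's book (and in \cite{Santambrogio15}), including the correct identification of the technical obstacle (that the dual of $C_b$ contains finitely additive measures, and one must use tightness of $\mu,\nu$ on a Polish space to ensure the relevant functional is represented by a genuine countably additive measure). Your remark about the compact-case-first workaround is also standard. One small point: for the second displayed equality, you should be explicit that after replacing $\phi$ by $\psi^{\dist_X^p}$ the resulting function need not lie in $C_b(X)$ a priori (it is bounded since $\psi$ is, and lower semicontinuous as a supremum of continuous functions, but continuity requires an argument or a further truncation step to stay within the admissible class); Villani handles this by a double $c$-transform and a truncation that keeps everything in $C_b$.
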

Also recall the following definition.
\begin{definition}\label{def: c-transform}
    For a function $\phi$ on a metric space $(X, \dist_X)$ and $s\in X$,
 the \emph{$\dist_X^p$-transform of $\phi$} is defined by
\begin{align*}
\phi^{\dist_X^p}(\tilde{x})\coloneqq \sup_{x\in X} \left(-\dist_X(x,\tilde{x})^p-\phi(x)\right) \in (-\infty,\infty].
\end{align*}
\end{definition}

Next we show a few lemmas on the $\dist_Y^p$-transform of a function in $\mathcal{Y}_p$. The continuity below is an analogue of \cite{GangboMcCann96}*{Appendix C}, but in spaces other than $\R^n$ and for functions in the restricted class~$\mathcal{Y}_p$.
\begin{lemma}\label{lem: Xp transform bdd}
If $\phi\in \mathcal{Y}_p$, then $\phi^{\dist_Y^p}$ is locally bounded and continuous on $Y$,
and belongs to $L^1(\mu)$ for all $\mu\in \mathcal{P}_p(Y)$. 
\end{lemma}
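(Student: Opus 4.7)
The plan is to leverage the two-sided control on $\phi$ that membership in $\mathcal{Y}_p$ provides, namely that $|\phi(t)|\le C(1+\dist_{y_0}(t)^p)$ globally and, for any $\varepsilon>0$, $|\phi(t)|\le \varepsilon(1+\dist_{y_0}(t)^p)$ outside some compact set $K_\varepsilon\subset Y$ (by the $C_0$ condition). The trivial lower bound $\phi^{\dist_Y^p}(s)\ge -\phi(s)$, obtained by taking $t=s$, will already give that $\phi^{\dist_Y^p}$ is bounded from below by $-C(1+\dist_{y_0}(s)^p)$.

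For the upper bound I would split the supremum according to whether $t\in K_\varepsilon$ or not. On $K_\varepsilon$ the term $-\phi(t)-\dist_Y(t,s)^p$ is bounded above by $\max_{K_\varepsilon}|\phi|$, which is finite since $K_\varepsilon$ is compact and $\phi$ continuous. For $t\notin K_\varepsilon$, the estimate $|\phi(t)|\le\varepsilon(1+\dist_{y_0}(t)^p)$ combined with $\dist_{y_0}(t)^p\le 2^{p-1}(\dist_{y_0}(s)^p+\dist_Y(t,s)^p)$ yields
\[
-\phi(t)-\dist_Y(t,s)^p\le \varepsilon+\varepsilon 2^{p-1}\dist_{y_0}(s)^p-(1-\varepsilon 2^{p-1})\dist_Y(t,s)^p.
\]
Choosing $\varepsilon<2^{-p}$ makes the last coefficient positive, so this term is dominated by $\varepsilon(1+2^{p-1}\dist_{y_0}(s)^p)$. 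Putting the two pieces together gives the bound $\phi^{\dist_Y^p}(s)\le C'(1+\dist_{y_0}(s)^p)$ for some constant $C'$. Local boundedness on $Y$ and membership in $L^1(\mu)$ for $\mu\in\mathcal{P}_p(Y)$ are then immediate, since both $|\phi(s)|$ and $(1+\dist_{y_0}(s)^p)$ are $\mu$-integrable.

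The main technical step will be continuity. Since $\phi^{\dist_Y^p}$ is a supremum of continuous functions of $s$, it is lower semicontinuous automatically; I need upper semicontinuity. Fix $s_0$ and $R>\dist_{y_0}(s_0)+1$, and consider $s\in \overline{B^Y_R(y_0)}$. The previous estimates show that when $\dist_{y_0}(t)\ge 2R$ and $t\notin K_\varepsilon$ (the latter holding automatically for $\dist_{y_0}(t)$ larger than the finite diameter of $K_\varepsilon$), one has $\dist_Y(t,s)\ge \dist_{y_0}(t)/2$, so
\[
-\phi(t)-\dist_Y(t,s)^p\le \varepsilon+(\varepsilon-2^{-p})\dist_{y_0}(t)^p\to -\infty
\]
uniformly in $s\in \overline{B^Y_R(y_0)}$. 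Since $\phi^{\dist_Y^p}(s)\ge -\phi(s)\ge -\sup_{B^Y_R(y_0)}|\phi|$ is bounded below on this ball, one can fix $M=M(R)$ so large that the sup can be restricted to $t\in \overline{B^Y_M(y_0)}$ for every such $s$.

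On this reduced domain the inequality $|a^p-b^p|\le p(M+R)^{p-1}|a-b|$ together with the triangle inequality gives
\[
\sup_{t\in\overline{B^Y_M(y_0)}}|\dist_Y(t,s)^p-\dist_Y(t,s')^p|\le p(M+R)^{p-1}\dist_Y(s,s'),
\]
so $s\mapsto \sup_{t\in\overline{B^Y_M(y_0)}}(-\phi(t)-\dist_Y(t,s)^p)$ is Lipschitz on $\overline{B^Y_R(y_0)}$. As this agrees with $\phi^{\dist_Y^p}$ there, continuity at $s_0$ follows, and $R$ being arbitrary gives continuity on all of $Y$. The only delicate point is choosing $M$ uniformly over a neighborhood of $s_0$; the rest is bookkeeping.
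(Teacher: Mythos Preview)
Your proof is correct and follows essentially the same strategy as the paper: the lower bound via $t=s$, the upper bound by splitting the supremum using the $C_0$ decay of $\phi/(1+\dist_{y_0}^p)$, and for continuity the key observation that near-maximizers $t$ stay in a bounded ball uniformly for $s$ in a bounded region. The only difference is packaging: the paper argues continuity sequentially by choosing $\varepsilon$-maximizers $t_\ell$ for each $s_\ell\to s_0$ and showing (by contradiction, via the same tail estimate you use) that $(t_\ell)$ remains bounded, whereas you first localize the supremum to $\overline{B^Y_M(y_0)}$ uniformly in $s\in\overline{B^Y_R(y_0)}$ and then read off a Lipschitz bound directly. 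Your version is slightly cleaner and in fact yields local Lipschitz continuity rather than mere continuity, at no extra cost.
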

\begin{proof}
We first show local boundedness. Note by definition, 
\[
\phi^{\dist_Y^p}(s)\geq -\dist_Y(s,s)^p-\phi(s)=-\phi(s)>-\infty
\]
for all $s\in Y$.
To see local boundedness from above, fix $y_0,s\in Y$. 
Since compact sets are bounded and $\phi\in \mathcal{Y}_p$, there exists an $R>0$ such that  
if $\dist_{y_0}(t)>R$, then 
\[
\frac{\left|\phi(t)\right|}{1+\dist_{y_0}(t)^p}\leq 2^{-p},
\]
we calculate for such $t$,
\begin{align}\label{eqn: tail bound}
\begin{split}
 -\dist_Y(t, s)^p-\phi(t)
 &
 \leq
 -\dist_Y(t, s)^p+2^{-p}\left(1+\dist_{y_0}(t)^p\right)\\
&\leq
 -\dist_Y(t, s)^p+
 2^{-p}\left[1+2^{p-1}\left(\dist_Y(t,s)^p+\dist_{y_0}(s)^p\right)\right]\\
 &=-\frac12 \dist_Y(t, s)^p+\frac{1}{2^p}+\frac12\dist_{y_0}(s)^p\\
&\leq \frac{1}{2^p}+\frac{1}{2}\dist_{y_0}(s)^p.
\end{split}
\end{align}
Thus
\begin{align*}
 \phi^{\dist_Y^p}(s)\leq \max\left\{
 \frac{1}{2^{p}}+\frac{1}{2}\dist_{y_0}(s)^p, \sup_{t\in B_R^Y(y_0)}\left(-\dist_Y(t, s)^p-\phi(t)\right)\right\},
\end{align*}
since $\phi \in \mathcal{Y}_p$ implies $\phi$ is bounded on bounded, open balls, the expression on the right is locally bounded in $s$, hence we see $\phi^{\dist_Y^p}$ is locally bounded. Since $\mu$ has finite $p$th moment, the above bounds give $\phi^{\dist_Y^p}\in L^1(\mu)$.

To see continuity, fix a convergent sequence $(s_\ell)_{\ell\in\mathbb{N}}$ in $Y$ with limit $s_0$ and fix $\varepsilon>0$.
Then since $\phi^{\dist_Y^p}$ is locally bounded from above, there exists $t_0\in Y$ such that 
\begin{align*}
\phi^{\dist_Y^p}(s_0)\leq -\dist_Y(t_0, s_0)^p-\phi(t_0)+\varepsilon,
\end{align*}
thus
\begin{align*}
    \phi^{\dist_Y^p}(s_0)-\phi^{\dist_Y^p}(s_\ell)
    &\leq -\dist_Y(t_0, s_0)^p+\dist_Y(t_0, s_\ell)^p+\varepsilon\\
    &\leq p\cdot\max\{\dist_Y(t_0, s_\ell)^{p-1}, \dist_Y(t_0, s_0)^{p-1}\}\left|\dist_Y(t_0, s_\ell)-\dist_Y(t_0, s_0)\right|+\varepsilon\\
    &\leq p\cdot\max\{\dist_Y(t_0, s_\ell)^{p-1}, \dist_Y(t_0, s_0)^{p-1}\}\dist_Y(s_\ell, s_0)+\varepsilon\\
    &< 2\varepsilon
\end{align*}
if $\ell$ is sufficiently large. Similarly, for any $\ell\in \N$,
we have 
\begin{align*}
    \phi^{\dist_Y^p}(s_\ell)-\phi^{\dist_Y^p}(s_0)
    \leq p\max\{\dist_Y(t_\ell, s_\ell)^{p-1}, \dist_Y(t_\ell, s_0)^{p-1}\}\dist_Y(s_\ell, s_0)+\varepsilon,
\end{align*}
where $t_\ell\in Y$ satisfies
\[
\phi^{\dist_Y^p}(s_\ell)\leq -\dist_Y(t_\ell, s_\ell)^p-\phi(t_\ell)+\varepsilon.
\]
Now suppose by contradiction that (after passing to some subsequence) $\lim_{\ell\to\infty}\dist_{y_0}(t_{\ell})=\infty$, then since $\phi\in \mathcal{Y}_p$, for all~$\ell$ sufficiently large we can apply~\eqref{eqn: tail bound} to obtain
\begin{align*} 
-\phi^{\dist_Y^p}(s_\ell)
\leq
-\dist_Y(t_{\ell}, s_\ell)^p-\phi(t_{\ell})+\varepsilon
\leq 
-\frac{1}{2}\dist_Y(t_{\ell}, s_\ell)^p+\frac{1}{2^p}+\frac12\dist_{y_0}(s_\ell)^p+\varepsilon
\xrightarrow{\ell\to \infty}-\infty,
\end{align*}
as $(s_\ell)_{\ell\in \N}$ is bounded. 
This contradicts that $\phi^{\dist_Y^p}$ is locally bounded, since $s_\ell\to s_0$ as $\ell\to\infty$. Thus for $\ell$ sufficiently large, 
\begin{align*}
    \phi^{\dist_Y^p}(s_\ell)-\phi^{\dist_Y^p}(s_0)
    &\leq p\max\{\dist_Y(t_\ell, s_\ell)^{p-1}, \dist_Y(t_\ell, s_0)^{p-1}\}\dist_Y(s_\ell, s_0)+\varepsilon<2\varepsilon,
\end{align*}
and we see $\phi^{\dist_Y^p}$ is continuous at $s_0$.
\end{proof} 
Next we prove stability of $\dist_Y^p$-transforms under the norm of $\mathcal{Y}_p$. Note we do not claim that $\tilde{\phi}^{\dist_Y^p}$ belongs to $\mathcal{Y}_p$ in~\eqref{eqn: Xp transform conv} below.
\begin{lemma}\label{lem: c_p transform of X_p}
Let $\phi\in \mathcal{Y}_p$ and $\mu \in \mathcal{P}_p(Y)$. Then: 
\begin{enumerate}
\setlength{\leftskip}{-15pt}
\item\label{eqn: Xp transform L1}
$\phi \in L^1(\mu)$ and 
\[
 \int_Y \left|\phi\right| d\mu
 \leq 
 \left\|\phi\right\|_{\mathcal{Y}_p}\int_Y(1+\dist_{y_0}(t)^p)d\mu(t).
 \]
\item\label{eqn: Xp transform conv}
Let $R_\phi>0$ be such that 
if $\dist_{y_0}(t)>R_\phi$, then 
\[
\frac{|\phi(t)|}{1+\dist_{y_0}(t)^p}
\leq 2^{-p-1}.
\]
Then for all
$\tilde{\phi}\in \mathcal{Y}_p$ with
$\|\phi-\tilde{\phi}\|_{\mathcal{Y}_p}<2^{-p-1}$ and $s\in Y$,
\begin{align*}
|\tilde{\phi}^{\dist_Y^p}(s)-\phi^{\dist_Y^p}(s)|
&\leq\left\| \phi-\tilde{\phi}\right\|_{\mathcal{Y}_p}
\left(1+
\max\{
R_\phi^p,
2^{p+1}(1+\left\| \phi\right\|_{\mathcal{Y}_p})(1+\dist_{y_0}(s)^p)\}\right).
\end{align*}
\end{enumerate}
\end{lemma}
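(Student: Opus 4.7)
The plan is to prove the two items separately, since the first is elementary while the second requires a two-case analysis to control where a near-maximizer of the $\dist_Y^p$-transform can lie.

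For item~\eqref{eqn: Xp transform L1}, the inequality is immediate from the definition of the $\mathcal{Y}_p$-norm, which yields the pointwise bound $|\phi(t)|\leq \|\phi\|_{\mathcal{Y}_p}(1+\dist_{y_0}(t)^p)$; integrating against $\mu\in\mathcal{P}_p(Y)$ gives the claim and in particular $\phi\in L^1(\mu)$ since $\mu$ has finite $p$th moment.

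For item~\eqref{eqn: Xp transform conv}, I would fix $s\in Y$ and $\varepsilon>0$, and reduce the estimate to a bound at a near-maximizer. Since $\phi^{\dist_Y^p}(s)$ is finite by Lemma~\ref{lem: Xp transform bdd}, there exists $t^{*}\in Y$ with $-\dist_Y(t^{*},s)^p-\phi(t^{*})\geq \phi^{\dist_Y^p}(s)-\varepsilon$. Testing $\tilde\phi^{\dist_Y^p}(s)$ at the same point $t^{*}$ and using $|\phi(t^{*})-\tilde\phi(t^{*})|\leq \|\phi-\tilde\phi\|_{\mathcal{Y}_p}(1+\dist_{y_0}(t^{*})^p)$ gives
\[
\phi^{\dist_Y^p}(s)-\tilde\phi^{\dist_Y^p}(s)\leq \|\phi-\tilde\phi\|_{\mathcal{Y}_p}(1+\dist_{y_0}(t^{*})^p)+\varepsilon,
\]
so the task becomes bounding $\dist_{y_0}(t^{*})^p$ independently of $\varepsilon$ (up to vanishing terms). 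I would split into two cases: if $\dist_{y_0}(t^{*})\leq R_\phi$, the bound $\dist_{y_0}(t^{*})^p\leq R_\phi^p$ is automatic; otherwise, the tail bound defining $R_\phi$ gives $|\phi(t^{*})|\leq 2^{-p-1}(1+\dist_{y_0}(t^{*})^p)$. Combining the near-maximizing condition with the trivial lower bound $\phi^{\dist_Y^p}(s)\geq -\phi(s)\geq -\|\phi\|_{\mathcal{Y}_p}(1+\dist_{y_0}(s)^p)$ and $\dist_{y_0}(t^{*})^p\leq 2^{p-1}(\dist_Y(t^{*},s)^p+\dist_{y_0}(s)^p)$ yields an inequality
\[
\dist_Y(t^{*},s)^p\leq 2^{-p-1}+\tfrac{1}{4}\dist_Y(t^{*},s)^p+\tfrac{1}{4}\dist_{y_0}(s)^p+\|\phi\|_{\mathcal{Y}_p}(1+\dist_{y_0}(s)^p)+\varepsilon
\]
that can be solved for $\dist_Y(t^{*},s)^p$; one more application of the triangle inequality then yields $\dist_{y_0}(t^{*})^p\leq 2^{p+1}(1+\|\phi\|_{\mathcal{Y}_p})(1+\dist_{y_0}(s)^p)+O(\varepsilon)$.

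For the reverse inequality $\tilde\phi^{\dist_Y^p}(s)-\phi^{\dist_Y^p}(s)$, I would apply the symmetric argument using a near-maximizer $\tilde t^{*}$ of $\tilde\phi^{\dist_Y^p}(s)$. Here the smallness hypothesis $\|\phi-\tilde\phi\|_{\mathcal{Y}_p}<2^{-p-1}$ enters: it ensures that $|\tilde\phi(t)|\leq 2^{-p}(1+\dist_{y_0}(t)^p)$ whenever $\dist_{y_0}(t)>R_\phi$, and $|\tilde\phi(s)|\leq (\|\phi\|_{\mathcal{Y}_p}+2^{-p-1})(1+\dist_{y_0}(s)^p)$, so the same two-case analysis goes through with only slightly different constants and still produces the bound $\dist_{y_0}(\tilde t^{*})^p\leq 2^{p+1}(1+\|\phi\|_{\mathcal{Y}_p})(1+\dist_{y_0}(s)^p)+O(\varepsilon)$. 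Finally, letting $\varepsilon\to 0$ removes the auxiliary term and yields the desired bound from both sides.

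The main obstacle is bookkeeping the constants: one must verify that after using $|\phi(t^{*})|\leq 2^{-p-1}(1+\dist_{y_0}(t^{*})^p)$ and solving the resulting inequality, the final factor matches exactly $2^{p+1}(1+\|\phi\|_{\mathcal{Y}_p})$ rather than a larger constant, and crucially that the reverse direction (where one loses a factor of $2$ in the tail bound on $\tilde\phi$) still fits within the same bound thanks to the hypothesis $\|\phi-\tilde\phi\|_{\mathcal{Y}_p}<2^{-p-1}$. Everything else is routine use of the triangle inequality, the elementary fact $(a+b)^p\leq 2^{p-1}(a^p+b^p)$, and the observation that the supremum in $\phi^{\dist_Y^p}(s)$ is approximated by a sequence with bounded $\dist_{y_0}$-values.
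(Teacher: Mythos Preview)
Your proposal is correct and follows essentially the same approach as the paper: pick an $\varepsilon$-near-maximizer, reduce to bounding $\dist_{y_0}$ of that point, and do a case analysis based on $R_\phi$ with the smallness hypothesis $\|\phi-\tilde\phi\|_{\mathcal{Y}_p}<2^{-p-1}$ providing the tail bound $|\tilde\phi(t)|\leq 2^{-p}(1+\dist_{y_0}(t)^p)$ for the reverse direction. The only organizational difference is that the paper further splits off the case $\dist_{y_0}(t_{\tilde\phi})\geq 2\dist_{y_0}(s)$ (so that $\dist_Y(t_{\tilde\phi},s)\geq \tfrac12\dist_{y_0}(t_{\tilde\phi})$ directly) before solving for $\dist_{y_0}(t_{\tilde\phi})^p$, whereas you absorb that step into the triangle inequality $\dist_{y_0}(t^*)^p\leq 2^{p-1}(\dist_Y(t^*,s)^p+\dist_{y_0}(s)^p)$ and solve the resulting linear inequality in $\dist_Y(t^*,s)^p$; both routes land within the stated constant $2^{p+1}(1+\|\phi\|_{\mathcal{Y}_p})$.
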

\begin{proof}
Assertion~\eqref{eqn: Xp transform L1} follows from the inequality 
\[
\left|\phi(t)\right| \leq \|\phi\|_{\mathcal{Y}_p} (1+\dist_{y_0}(t)^p)
\quad
\text{for all }t\in Y.
\]

Assertion~\eqref{eqn: Xp transform conv} is more involved. 
Fix $\varepsilon>0$, then if $\tilde{\phi}\in \mathcal{Y}_p$ 
by Lemma~\ref{lem: Xp transform bdd},   $\tilde{\phi}^{\dist_Y^p}$ is finite on all of $Y$. Thus for any $s\in Y$, there exists $t_{\tilde{\phi}}\in Y$ such that
\begin{align*}
    \tilde{\phi}^{\dist_Y^p}(s)&\leq -\dist_Y(t_{\tilde{\phi}},s)^p-\phi(t_{\tilde{\phi}})+\varepsilon.
\end{align*}
Then, 
\begin{align*}
   \tilde{\phi}^{\dist_Y^p}(s)-\phi^{\dist_Y^p}(s)
   &\leq -\dist_Y(t_{\tilde{\phi}},s)^p-\tilde{\phi}(t_{\tilde{\phi}})
 +\dist_Y( t_{\tilde{\phi}},s)^p
 +\phi(t_{\tilde{\phi}})+\varepsilon
\leq\left\| \phi-\tilde{\phi}\right\|_{\mathcal{Y}_p}(1+\dist_{y_0}(t_{\tilde{\phi}})^p)+\varepsilon,
\end{align*}
and switching the roles of $\phi$, $\tilde{\phi}$ yields
\begin{align}
\label{eqn: transform integral bound}
|\tilde{\phi}^{\dist_Y^p}(s)-\phi^{\dist_Y^p}(s)|
\leq\left\| \phi-\tilde{\phi}\right\|_{\mathcal{Y}_p}\left(1+
\max\{
\dist_{y_0}(t_{\phi})^p,\dist_{y_0}(t_{\tilde{\phi}})^p
\}\right)
+\varepsilon.
\end{align}
Now suppose $\tilde{\phi}\in \mathcal{Y}_p$ with $\|\phi-\tilde{\phi}\|_{\mathcal{Y}_p}<2^{-p-1}$,
then if $\dist_{y_0}(t)>R_\phi$,
\begin{align*}
\frac{\left|\tilde{\phi}(t)\right|}{1+\dist_{y_0}(t)^p}
\leq 
\|\phi-\tilde{\phi}\|_{\mathcal{Y}_p}
+
\frac{\left|\phi(t)\right|}{1+\dist_{y_0}(t)^p}
<2^{-p}.
\end{align*}
If  $s$, $t\in Y$ satisfy $\dist_{y_0}(t) \geq \max\{R_\phi, 2\dist_{y_0}(s)\}$,
 by the triangle inequality, 
\begin{align*}
    \dist_Y(t, s)\geq \left| \dist_{y_0}(t)-\dist_{y_0}(s)\right| =\dist_{y_0}(t)-\dist_{y_0}(s)\geq \frac{1}{2}\dist_{y_0}(t),
\end{align*}
then from \eqref{eqn: tail bound} we obtain that
\begin{align*}
-\dist_Y(t,s)^p-\tilde{\phi}(t)
\leq
-\frac12 \dist_Y(t, s)^p+\frac{1}{2^p}+\frac{1}{2}\dist_{y_0}(s)^p
\leq -\frac{1}{2^{p+1}}\dist_{y_0}(t)^p+ \frac{1}{2^{p}}+\frac{1}{2}\dist_{y_0}(s)^p,
\end{align*}
Thus if $s\in Y$ is such that
$\dist_{y_0}(t_{\tilde{\phi}})\geq \max\{R_\phi,2\dist_{y_0}(s)\}$, 
we have 
\begin{align*}
 -\left\| \tilde{\phi}\right\|_{\mathcal{Y}_p}(1+\dist_{y_0}(s)^p)
 \leq-\tilde{\phi}(s)
 \leq \tilde{\phi}^{\dist_Y^p}(s)
 &\leq -\dist_Y(t_{\tilde{\phi}},s)^p-\tilde{\phi}(t_{\tilde{\phi}})+\varepsilon\\
&\leq -\frac{1}{2^{p+1}}\dist_{y_0}(t_{\tilde{\phi}})^p+ \frac{1}{2^{p}}+\frac{1}{2}\dist_{y_0}(s)^p+\varepsilon
\end{align*}
or rearranging,
\begin{align*}
\dist_{y_0}(t_{\tilde{\phi}})^p
&\leq 2^{p+1}\left\| \tilde{\phi}\right\|_{\mathcal{Y}_p}(1+\dist_{y_0}(s)^p) 
+ 2+2^{p}\dist_{y_0}(s)^p+2^{p+1}\varepsilon\\
&\leq 
2^{p+1}(2^{-p-1}+\left\| {\phi}\right\|_{\mathcal{Y}_p})(1+\dist_{y_0}(s)^p) 
+ 2+2^{p}\dist_{y_0}(s)^p+2^{p+1}\varepsilon\\
&
\leq 2^{p+1}
\left[(1+\left\| \phi\right\|_{\mathcal{Y}_p})(1+\dist_{y_0}(s)^p)+\varepsilon\right].
\end{align*}
Thus in all cases, we have
\begin{align*}
    \dist_{y_0}(t_{\tilde{\phi}})^p
&\leq
\max\left\{
R_\phi^p,
2^{p+1}\left[(1+\left\| \phi\right\|_{\mathcal{Y}_p})(1+\dist_{y_0}(s)^p)+\varepsilon\right]\right\}.
\end{align*}
We can obtain the above estimate when $\tilde{\phi}=\phi$ as well, hence combining with \eqref{eqn: transform integral bound} and taking $\varepsilon$ to~$0$ finishes the proof.
\end{proof}

Our approach will be to apply the classic Kantorovich duality for each $\omega\in \Omega$, and appeal to the Kuratowski and Ryll-Nardzewski measurable selection theorem (Theorem~\ref{measurableselection}) to obtain the necessary measurability. However, care must be taken to utilize this measurability since we are not in the trivial bundle case. To this end, given $\mathfrak{m}$, $\mathfrak{n}\in \mathcal{P}^\sigma_p(E)$, and $\varepsilon>0$, 
for each $j\in \N$ we define a set-valued function $\overline{F}^{\mathfrak{m}, \mathfrak{n}}_{j, \varepsilon}$ from $U_j$ to~$2^{\mathcal{Y}_p}$ by
\begin{align*}
\overline{F}^{\mathfrak{m}, \mathfrak{n}}_{j, \varepsilon}(\omega)
&\coloneqq \overline{\left\{
\phi \in \mathcal{Y}_p\Biggm|
 -\int_Y\phi d(\Xi^{-1}_{j, \omega})_\sharp\mathfrak{m}^\omega-\int_Y\phi^{\dist_Y^p} d(\Xi^{-1}_{j, \omega})_\sharp\mathfrak{n}^\omega
 > \mk_p^E(\mathfrak{m}^\omega, \mathfrak{n}
 ^\omega)^p-\varepsilon\right\}}^{\left\|\cdot\right\|_{\mathcal{Y}_p}},
\end{align*}
where $\overline{A}^{\left\|\cdot\right\|_{\mathcal{Y}_p}}$ denotes the closure of $A\subset \mathcal{Y}_p$ 
with respect to the norm $\left\|\cdot\right\|_{\mathcal{Y}_p}$.

For the remainder of the section, for $j\in \N$ we denote
\begin{align*}
    \sigma_j\coloneqq \sigma\vert_{U_j}.
\end{align*}
\begin{lemma}\label{lem: weakly measurable}
Assume $(Y, \dist_Y)$ is locally compact and let $\mathfrak{m}$, $\mathfrak{n}\in \mathcal{P}^\sigma_p(E)$. 
Then for $\varepsilon>0$ and $j\in\N$, it holds that $\overline{F}^{\mathfrak{m}, \mathfrak{n}}_{j, \varepsilon}$ is $\mathcal{B}_{\sigma_j}$-weakly measurable
and $\overline{F}^{\mathfrak{m}, \mathfrak{n}}_{j, \varepsilon} (\omega)$ is closed and nonempty for $\sigma$-a.e. $\omega \in U_j$.
\end{lemma}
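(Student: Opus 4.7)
Fix $\omega\in U_j$ in the full-measure set where $\mathfrak{m}^\omega, \mathfrak{n}^\omega \in \mathcal{P}_p(\pi^{-1}(\{\omega\}))$. Since $\Xi_{j,\omega}$ is an isometry, the pullbacks $\mu^\omega:=(\Xi^{-1}_{j,\omega})_\sharp \mathfrak{m}^\omega$ and $\nu^\omega:=(\Xi^{-1}_{j,\omega})_\sharp \mathfrak{n}^\omega$ lie in $\mathcal{P}_p(Y)$ with $\mk_p^Y(\mu^\omega,\nu^\omega)=\mk_p^E(\mathfrak{m}^\omega,\mathfrak{n}^\omega)$. For nonemptiness, I would apply the classical Kantorovich duality (Theorem~\ref{Kantorovich}) to obtain $\phi\in C_b(Y)$ with
\[
-\int_Y \phi\, d\mu^\omega-\int_Y \phi^{\dist_Y^p}\, d\nu^\omega> \mk_p^Y(\mu^\omega,\nu^\omega)^p-\tfrac{\varepsilon}{2},
\]
and then approximate $\phi$ by $\phi_n:=\phi\cdot \rho_n$, where $\rho_n\in C_c(Y)$ satisfies $0\leq\rho_n\leq 1$ and $\rho_n\equiv 1$ on a compact exhaustion $K_n\uparrow Y$ (available since $Y$ is separable and locally compact, hence $\sigma$-compact, via Urysohn). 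Each $\phi_n$ has compact support, so $\phi_n/(1+\dist_{y_0}^p)\in C_0(Y)$ and thus $\phi_n\in \mathcal{Y}_p$. Dominated convergence gives $\int \phi_n\, d\mu^\omega\to \int \phi\, d\mu^\omega$, and a pointwise analysis of $\phi_n^{\dist_Y^p}$, combined with the uniform bound $\lvert\phi_n^{\dist_Y^p}\rvert\leq \|\phi\|_\infty$, gives $\int \phi_n^{\dist_Y^p}\, d\nu^\omega\to \int \phi^{\dist_Y^p}\, d\nu^\omega$. Hence for $n$ sufficiently large, $\phi_n$ belongs to the inner set before closure, proving nonemptiness. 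Closedness is immediate from the definition.

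For weak measurability, the key observation is that the map $\phi\mapsto \phi/(1+\dist_{y_0}^p)$ is an isometric isomorphism from $(\mathcal{Y}_p,\|\cdot\|_{\mathcal{Y}_p})$ onto $(C_0(Y),\|\cdot\|_\infty)$; since $Y$ is second-countable locally compact Hausdorff, $C_0(Y)$ is separable, and so is $\mathcal{Y}_p$. Let $\{\phi_\ell\}_{\ell\in \N}\subset \mathcal{Y}_p$ be countable and dense. Lemma~\ref{lem: c_p transform of X_p}~\eqref{eqn: Xp transform conv}, together with the assumption $\mu^\omega,\nu^\omega\in \mathcal{P}_p(Y)$, ensures that for each fixed $\omega$ the functional
\[
\Psi_\omega(\phi):= -\int_Y \phi\, d\mu^\omega-\int_Y \phi^{\dist_Y^p}\, d\nu^\omega
\]
is continuous on $\mathcal{Y}_p$. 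Consequently, the inner set
\[
F_0(\omega):=\left\{\phi\in \mathcal{Y}_p\Bigm| \Psi_\omega(\phi)>\mk_p^E(\mathfrak{m}^\omega,\mathfrak{n}^\omega)^p-\varepsilon\right\}
\]
is open in $\mathcal{Y}_p$. Since $O$ is open, a standard fact gives the equivalence $\overline{F}^{\mathfrak{m},\mathfrak{n}}_{j,\varepsilon}(\omega)\cap O\neq\emptyset \iff F_0(\omega)\cap O\neq\emptyset$, which by density is equivalent to the existence of $\ell$ with $\phi_\ell\in F_0(\omega)\cap O$. Thus
\[
\{\omega\in U_j\mid \overline{F}^{\mathfrak{m},\mathfrak{n}}_{j,\varepsilon}(\omega)\cap O\neq\emptyset\}=\bigcup_{\ell:\,\phi_\ell\in O}\{\omega\in U_j\mid \phi_\ell\in F_0(\omega)\},
\]
reducing weak measurability to showing that each set on the right is $\mathcal{B}_{\sigma_j}$-measurable.

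To check the latter, writing $\Xi_j^{-1}(u)=(\pi(u),\tau_j(u))$ for a continuous $\tau_j:\pi^{-1}(U_j)\to Y$, we have $\int_Y \phi_\ell\, d\mu^\omega=\int_E (\phi_\ell\circ \tau_j)\, d\mathfrak{m}^\omega$. Measurability of $\omega\mapsto \int_E f\, d\mathfrak{m}^\omega$ for bounded Borel $f$ is given by the disintegration theorem (\nameref{thm: disintegration}), and standard truncation combined with dominated convergence (controlled by $|\phi_\ell\circ\tau_j|\leq \|\phi_\ell\|_{\mathcal{Y}_p}(1+\dist_{y_0}^p(\tau_j(\cdot)))$ and the finite $p$th moment of $\mathfrak{m}^\omega$, recalling the estimate~\eqref{eqn: disint moment estimate}) yields the measurability for the unbounded $\phi_\ell\circ\tau_j$. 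The same argument applied to the continuous $\phi_\ell^{\dist_Y^p}$ (continuous by Lemma~\ref{lem: Xp transform bdd}, with $p$-growth controlled via its proof) handles $\omega\mapsto \int_Y \phi_\ell^{\dist_Y^p}\, d\nu^\omega$. Finally, $\omega\mapsto \mk_p^E(\mathfrak{m}^\omega,\mathfrak{n}^\omega)$ is Borel by \cite{AmbrosioGigliSavare08}*{Lemma 12.4.7}. Assembling these pieces gives $\mathcal{B}_{\sigma_j}$-weak measurability.

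\textbf{Main obstacle.} The technically delicate step is the cutoff approximation in the nonemptiness proof. Multiplying $\phi\in C_b(Y)$ by $\rho_n$ lands us in $\mathcal{Y}_p$, but controlling $\int (\phi\rho_n)^{\dist_Y^p}\, d\nu^\omega - \int \phi^{\dist_Y^p}\, d\nu^\omega$ requires verifying that for each fixed $s\in Y$ the near-maximizers in the supremum defining $\phi^{\dist_Y^p}(s)$ lie in a bounded set depending only on $\|\phi\|_\infty$ and $s$, hence eventually inside $K_n$, while simultaneously showing that contributions from $Y\setminus \spt(\rho_n)$ vanish because $\dist_Y(s,Y\setminus \spt(\rho_n))\to\infty$. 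This pointwise convergence, combined with the uniform bound $\|\phi\|_\infty$ for dominated convergence, is what pushes the argument through.
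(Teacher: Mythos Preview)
Your weak-measurability argument mirrors the paper's: both reduce to a countable dense subset of $\mathcal{Y}_p$ via the $\|\cdot\|_{\mathcal{Y}_p}$-continuity of $\phi\mapsto -\int\phi\,d\mu^\omega-\int\phi^{\dist_Y^p}\,d\nu^\omega$ supplied by Lemma~\ref{lem: c_p transform of X_p}, and both check measurability of the resulting single-$\phi$ level sets via the \nameref{thm: disintegration} together with \cite{AmbrosioGigliSavare08}*{Lemma 12.4.7}. Your phrasing ``$F_0(\omega)$ is open in $\mathcal{Y}_p$'' is a slight repackaging of the paper's explicit equality $\bigcup_{\phi\in O}G_\phi^{-1}((-\varepsilon,\infty))=\bigcup_\ell G_{\phi_\ell}^{-1}((-\varepsilon,\infty))$, but the content is identical.

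For nonemptiness the paper is far shorter: after Kantorovich duality (Theorem~\ref{Kantorovich}) produces $\phi_\varepsilon\in C_b(Y)$, it simply asserts $C_b(Y)\subset\mathcal{Y}_p$ and stops---no cutoff $\phi\rho_n$, no analysis of $(\phi\rho_n)^{\dist_Y^p}$, and none of the delicate convergence you flag as the ``main obstacle.'' So for the paper's route your extra work is unnecessary. That said, the inclusion $C_b(Y)\subset\mathcal{Y}_p$ is equivalent to $1/(1+\dist_{y_0}^p)\in C_0(Y)$, i.e., to closed $\dist_Y$-balls being compact; under bare local compactness this can fail, so your cutoff instinct is aimed at a real subtlety. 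However, your own sketch then leans on ``bounded set $\Rightarrow$ eventually inside $K_n$'' and ``$\dist_Y(s,Y\setminus\spt\rho_n)\to\infty$,'' both of which are again Heine--Borel statements. Thus neither route, as written, covers the non-Heine--Borel case; the paper simply opts for brevity.
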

\begin{proof}
Since 
$\mathfrak{m}$, $\mathfrak{n}\in \mathcal{P}^\sigma_p(E)$, $j\in\N$, and $\varepsilon>0$ are fixed, 
we write $\overline{F}$ in place of $\overline{F}^{\mathfrak{m}, \mathfrak{n}}_{j, \varepsilon}$.
We first show 
$\overline{F}(\omega)\neq \emptyset$ for $\sigma_j$-a.e. $\omega\in U_j$.
Since $(\Xi^{-1}_{j, \omega})_\sharp\mathfrak{m}^\omega$, $(\Xi^{-1}_{j, \omega})_\sharp\mathfrak{n}^\omega\in \mathcal{P}_p(Y)$ for $\sigma_j$-a.e.~$\omega$, for such $\omega$ we have
\begin{align*}
\mk_p^E(\mathfrak{m}^\omega, \mathfrak{n}^\omega)=\mk_p^Y((\Xi^{-1}_{j, \omega})_\sharp\mathfrak{m}^\omega, (\Xi^{-1}_{j, \omega})_\sharp\mathfrak{n}^\omega)<\infty
\end{align*}
and by the Kantorovich duality Theorem~\ref{Kantorovich} for $\mk_p^Y$,  
there exists
$\phi_\varepsilon\in C_b(Y)\subset \mathcal{Y}_p$ 
such that 
\begin{align*}
\mk_p^E(\mathfrak{m}^\omega, \mathfrak{n}^\omega)^p-
\varepsilon
&
<
-\int_Y\phi^{\dist_Y^p}_\varepsilon d(\Xi^{-1}_{j, \omega})_\sharp\mathfrak{m}^\omega-\int_Y\phi_\varepsilon d(\Xi^{-1}_{j, \omega})_\sharp\mathfrak{n}^\omega,
\end{align*}
thus $\phi_\varepsilon \in \overline{F}(\omega)\neq \emptyset$.
By definition, $\overline{F}(\omega)$ is closed.

Next, we prove the $\mathcal{B}_{\sigma_j}$-weak measurability of $\overline{F}$. 
Define 
\begin{align*}
F(\omega)
&\coloneqq \left\{
\phi \in \mathcal{Y}_p\Biggm|
 -\int_Y\phi^{\dist_Y^p} d(\Xi^{-1}_{j, \omega})_\sharp\mathfrak{m}^\omega-\int_Y\phi d(\Xi^{-1}_{j, \omega})_\sharp\mathfrak{n}^\omega
 > \mk_p^E(\mathfrak{m}^\omega, \mathfrak{n}
 ^\omega)^p-\varepsilon
\right\}.
\end{align*}
First, for any open set $O\subset \mathcal{Y}_p$ and any set $A\subset \mathcal{Y}_p$, 
it trivially holds  that $\overline{A}^{\left\|\cdot\right\|_{\mathcal{Y}_p}}\cap O\neq \emptyset$ if and only if $A\cap O\neq\emptyset$, 
thus it is sufficient to prove that $F$ is $\mathcal{B}_{\sigma_j}$-weakly measurable.
To this end, fix $\phi\in\mathcal{Y}_p$ and define the function $G_\phi: \Omega\to [-\infty,\infty)$ by
\begin{align*}
 G_\phi(\omega)\coloneqq -\int_Y\phi^{\dist_Y^p} d(\Xi^{-1}_{j, \omega})_\sharp\mathfrak{m}^\omega-\int_Y\phi d(\Xi^{-1}_{j, \omega})_\sharp\mathfrak{n}^\omega-\mk_p^E(\mathfrak{m}^\omega, \mathfrak{n}^\omega)^p,
\end{align*}
then $\phi\in F(\omega)$ if and only if $G_\phi(\omega)>-\varepsilon$, hence
\begin{align}\label{eqn: F inverse decomposition}
 \{ \omega\in \Omega\mid  F(\omega) \cap O \neq \emptyset \}&=\bigcup_{\phi\in O}G_\phi^{-1}((-\varepsilon, \infty)).
\end{align}
Since $(Y, \dist_Y)$ is locally compact and separable, by combining \cite{Kechris95}*{(5.3) Theorem ii) and iv)}, and ~\cite{Conway90}*{Chapter V.5, Exercise 2(c)} we find $C_0(Y)$ is separable, hence there exists a countable set $\{\tilde{\phi}_{\ell}\}_{\ell\in\mathbb{N}}\subset C_0(Y)$, dense in 
the supremum norm, then 
\begin{align*}
\{\phi_{\ell}\}_{\ell\in\mathbb{N}}\coloneqq \left\{ (1+\dist_{y_0}^p)\tilde{\phi}_{\ell}\right\}_{\ell\in\mathbb{N}}\subset \mathcal{Y}_p
\end{align*}
is dense in $\left\|\cdot\right\|_{\mathcal{Y}_p}$; we may throw out some elements to assume $\{\phi_\ell\}_{\ell\in\mathbb{N}}\subset O$ while remaining dense in $O$. 
 We now claim that
\begin{align}\label{eqn: measurability inclusions}
\bigcup_{\phi\in O}G_\phi^{-1}((-\varepsilon, \infty))
 =\bigcup_{\ell=1}^\infty G_{\phi_\ell}^{-1}((-\varepsilon, \infty)).
\end{align}
Since $\{\phi_\ell\}_{\ell\in\mathbb{N}}\subset O$, it is clear that
\[
\bigcup_{\ell=1}^\infty G_{\phi_\ell}^{-1}((-\varepsilon, \infty))
\subset
\bigcup_{\phi\in O}G_\phi^{-1}((-\varepsilon, \infty)). 
\]
On the other hand, suppose $\omega\in G_\phi^{-1}((-\varepsilon, \infty))$ for some $\phi\in O$. From Lemma~\ref{lem: c_p transform of X_p}
combined with the fact that $(\Xi^{-1}_{j, \omega})_\sharp\mathfrak{n}^\omega\in\mathcal{P}_p(Y)$, and the density of $\{\phi_\ell\}_{\ell\in \mathbb{N}}$ in $\mathcal{Y}_p$, 
for any $\delta>0$, there exists $\ell_\delta\in \N$ such that
\[
G_{\phi}(\omega)-G_{\phi_{\ell_\delta}}(\omega)
=
-\int_Y \left(\phi^{\dist_Y^p}-\phi^{\dist_Y^p}_{\ell_\delta}\right) d(\Xi^{-1}_{j, \omega})_\sharp\mathfrak{m}^\omega-
\int_Y \left(\phi-\phi_{\ell_\delta}\right) d(\Xi^{-1}_{j, \omega})_\sharp\mathfrak{n}^\omega
<\delta,
\]
thus taking $\delta=G_{\phi}(\omega)+\varepsilon>0$, we have 
\[
G_{\phi}(\omega)-G_{\phi_{\ell_\delta}}(\omega)
<G_{\phi}(\omega)+\varepsilon,
\]
consequently  $G_{\phi_{\ell_\delta}}(\omega)>-\varepsilon$. 
Thus $\omega\in  G_{\phi_{\ell_\delta}}^{-1}((-\varepsilon, \infty))$ and the opposite inclusion is proved.

By \cite{AmbrosioGigliSavare08}*{Lemma 12.4.7} and \nameref{thm: disintegration}, we see that 
$
G_{\phi_\ell}^{-1}((-\varepsilon, \infty))\in\mathcal{B}_{\sigma_j}$
for each $\ell\in \mathbb{N}$, 
hence 
\[
\bigcup_{\ell=1}^\infty G_{\phi_\ell}^{-1}((-\varepsilon, \infty))\in \mathcal{B}_{\sigma_j}.
\]
Thus combining \eqref{eqn: F inverse decomposition} and \eqref{eqn: measurability inclusions}, this shows $F$ is $\mathcal{B}_{\sigma_j}$-weakly measurable.
\end{proof}

We now prove some auxiliary lemmas.
\begin{lemma}\label{lem: measurability of transform}
For $j\in \N$, if $f\in L^0(\sigma_j; \mathcal{Y}_p)$, then for $\mathfrak{m}$, $\mathfrak{n}\in \mathcal{P}^\sigma_p(E)$, the functions defined by
\begin{equation}\label{functions}
\omega\mapsto \int_Y f_\omega^{\dist_Y^p} d(\Xi^{-1}_{j, \omega})_\sharp\mathfrak{m}^\omega,
\qquad
\omega\mapsto \int_Y f_\omega d(\Xi^{-1}_{j, \omega})_\sharp\mathfrak{n}^\omega
\end{equation}
are $\mathcal{B}_{\sigma_j}$-measurable on $U_j$.
\end{lemma}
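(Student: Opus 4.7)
The plan is to reduce to simple functions using the definition of $L^0(\sigma_j;\mathcal{Y}_p)$, verify the claim directly in that case, and then pass to a pointwise limit using the stability estimates already established for the $\dist_Y^p$-transform. Specifically, by strong measurability there exist simple functions $f_n=\sum_{i=1}^{I_n}\mathds{1}_{\Omega_{n,i}}\phi_{n,i}$ (with $\Omega_{n,i}\in\mathcal{B}_{\sigma_j}$ disjoint and $\phi_{n,i}\in\mathcal{Y}_p$) such that $\|f_n(\omega)-f(\omega)\|_{\mathcal{Y}_p}\to 0$ for $\sigma_j$-a.e.\ $\omega$. Since a pointwise $\sigma_j$-a.e.\ limit of $\mathcal{B}_{\sigma_j}$-measurable functions is $\mathcal{B}_{\sigma_j}$-measurable, it suffices to treat the simple case and control the limit.

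For a simple function, both integrals in~\eqref{functions} collapse to finite sums of terms $\mathds{1}_{\Omega_{n,i}}(\omega)\int_Y \phi_{n,i}\,d(\Xi_{j,\omega}^{-1})_\sharp\mathfrak{n}^\omega$ (and the analogous expression with $\phi_{n,i}^{\dist_Y^p}$ against $\mathfrak{m}^\omega$), so it suffices to show that for each fixed $\phi\in\mathcal{Y}_p$ the map $\omega\mapsto \int_Y\phi\,d(\Xi_{j,\omega}^{-1})_\sharp\mathfrak{n}^\omega$ is $\mathcal{B}_{\sigma_j}$-measurable on $U_j$, together with its counterpart for $\phi^{\dist_Y^p}$ against $\mathfrak{m}^\omega$. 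For fixed $\phi$, I would truncate by $\phi_N:=\max(-N,\min(\phi,N))$, so that $h_N(\omega,u):=\phi_N(\Xi_{j,\omega}^{-1}(u))\mathds{1}_{\pi^{-1}(U_j)}(u)$ is a bounded Borel function on $\Omega\times E$ (jointly continuous on $U_j\times\pi^{-1}(U_j)$ thanks to continuity of $\Xi_j^{-1}$). A standard monotone class argument, starting from $\mathds{1}_{A\times B}$ with $A\in\mathcal{B}_{\sigma_j}$ and $B$ Borel in $E$, shows $\omega\mapsto\int_E h_N(\omega,u)\,d\mathfrak{n}^\omega(u)$ is $\mathcal{B}_{\sigma_j}$-measurable, using that $\omega\mapsto\mathfrak{n}^\omega(B)$ is Borel by \nameref{thm: disintegration}. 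Letting $N\to\infty$ and invoking dominated convergence with integrable dominant $\|\phi\|_{\mathcal{Y}_p}(1+\dist_{y_0}^p)$ against $(\Xi_{j,\omega}^{-1})_\sharp\mathfrak{n}^\omega\in\mathcal{P}_p(Y)$, this produces the desired measurability for $\phi$. The analogous argument works for $\phi^{\dist_Y^p}$, since Lemma~\ref{lem: Xp transform bdd} ensures $\phi^{\dist_Y^p}$ is continuous, locally bounded, and integrable against $(\Xi_{j,\omega}^{-1})_\sharp\mathfrak{m}^\omega\in\mathcal{P}_p(Y)$.

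To pass from simple $f_n$ to general $f$, I would use two pointwise estimates at each $\omega$ where convergence holds. For the untransformed integral, the definition of $\|\cdot\|_{\mathcal{Y}_p}$ gives
\[
\left|\int_Y (f_{n,\omega}-f_\omega)\,d(\Xi_{j,\omega}^{-1})_\sharp\mathfrak{n}^\omega\right|
\leq \|f_n(\omega)-f(\omega)\|_{\mathcal{Y}_p}\int_Y(1+\dist_{y_0}(t)^p)\,d(\Xi_{j,\omega}^{-1})_\sharp\mathfrak{n}^\omega(t),
\]
and the right side tends to $0$ for $\sigma_j$-a.e.\ $\omega$ since $\mathfrak{n}^\omega\in\mathcal{P}_p(\pi^{-1}(\{\omega\}))$ for such $\omega$. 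For the transformed integral, Lemma~\ref{lem: c_p transform of X_p}~\eqref{eqn: Xp transform conv} gives, once $\|f_n(\omega)-f(\omega)\|_{\mathcal{Y}_p}<2^{-p-1}$, an estimate
\[
\left|f_{n,\omega}^{\dist_Y^p}(s)-f_\omega^{\dist_Y^p}(s)\right|
\leq \|f_n(\omega)-f(\omega)\|_{\mathcal{Y}_p}\bigl(1+C_\omega(1+\dist_{y_0}(s)^p)\bigr),
\]
with $C_\omega$ depending only on $f(\omega)$, so integrating against $(\Xi_{j,\omega}^{-1})_\sharp\mathfrak{m}^\omega\in\mathcal{P}_p(Y)$ yields pointwise convergence of the transformed integrals as well. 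The measurability of the limits then follows.

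The main obstacle is the measurability in the simple-function case for a single fixed $\phi\in\mathcal{Y}_p$: the integrand $\phi\circ\Xi_{j,\omega}^{-1}$ carries a nontrivial $\omega$-dependence coming from the trivialization, so one cannot directly quote a ``$\omega\mapsto\mu^\omega$ is Borel'' statement at a fixed test function. The resolution is the joint continuity of $(\omega,u)\mapsto\phi(\Xi_{j,\omega}^{-1}(u))$ on $U_j\times\pi^{-1}(U_j)$ combined with the monotone class / truncation argument; after this, the rest of the proof is routine via the quantitative stability of $\dist_Y^p$-transforms.
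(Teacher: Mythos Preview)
Your approach is essentially the paper's: approximate by simple functions, verify measurability for a fixed $\phi\in\mathcal{Y}_p$, then pass to the limit via Lemma~\ref{lem: c_p transform of X_p}. One simplification: what you identify as the ``main obstacle'' --- the $\omega$-dependence of $\phi\circ\Xi_{j,\omega}^{-1}$ --- is not actually present, because $\mathfrak{n}^\omega$ is supported on $\pi^{-1}(\{\omega\})$, and on that fiber $\Xi_{j,\omega}^{-1}$ coincides with $\mathrm{pr}_Y\circ\Xi_j^{-1}$, a fixed continuous map on $\pi^{-1}(U_j)$; so the integrand is a single Borel function of $u$ alone and \nameref{thm: disintegration} applies directly (this is exactly what the paper does, simply citing the disintegration theorem without further comment). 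Your truncation and monotone class argument are correct but unnecessary once this is observed.
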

\begin{proof}
Since $f$ is $\sigma_j$-strongly measurable, for each $\ell\in \N$ there exist $I_\ell\in \N$, $\{\phi_{i, \ell}\}_{i=1}^{I_\ell}\subset \mathcal{Y}_p$, and a partition $\{A_{i, \ell}\}_{i=1}^{I_\ell}\subset \mathcal{B}_{\sigma_j}$ of $U_j$ so that for $\sigma_j$-a.e. $\omega$, the sequence
\[
f^\ell_\omega\coloneqq \sum_{i=1}^{I_\ell}\mathds{1}_{A_{i, \ell}}(\omega)\phi_{i, \ell}
\]
converges   
to $f_\omega$ in $\left\|\cdot\right\|_{\mathcal{Y}_p}$. 
The probability measures $(\Xi^{-1}_{j, \omega})_\sharp\mathfrak{m}^{\bullet}$ and $(\Xi^{-1}_{j, \omega})_\sharp\mathfrak{n}^{\bullet}$ have finite $p$th moment $\sigma$-a.e.,
fix $\omega$ such that this holds. For each $\ell\in \mathbb{N}$,
since $\{A_{i, \ell}\}_{i=1}^{I_\ell}$ is a disjoint collection
there exists a unique $1\leq i_\ell\leq I_\ell$ such that $\omega\in A_{i_\ell, \ell}$, 
then 
\begin{align*}
\int_Y f^\ell_\omega d(\Xi^{-1}_{j, \omega})_\sharp\mathfrak{n}^\omega&=\sum_{i=1}^{I_\ell}\mathds{1}_{A_{i, \ell}}(\omega)\int_Y\phi_{i, \ell}(t)d(\Xi^{-1}_{j, \omega})_\sharp\mathfrak{n}^\omega(t)
\end{align*}
and 
\begin{align*}
\int_Y(f^\ell_{\omega})^{\dist_Y^p}d(\Xi^{-1}_{j, \omega})_\sharp\mathfrak{m}^\omega
 &=\int_Y \left[
      \sup_{t\in Y}
      \left(-\dist_Y(t, s)^p-\sum_{i=1}^{I_\ell}\mathds{1}_{A_{i, \ell}}({\omega})\phi_{i, \ell}(t)\right)\right]
      d(\Xi^{-1}_{j, \omega})_\sharp\mathfrak{m}^\omega(s)\\
 &=\int_Y
 \left[
  \sup_{t\in Y}\left(-\dist_Y(t, s)^p-\phi_{i_\ell, \ell}(t)\right)\right]d(\Xi^{-1}_{j, \omega})_\sharp\mathfrak{m}^\omega(s)\\
 &=\int_Y\phi_{i_\ell, \ell}^{\dist_Y^p}d(\Xi^{-1}_{j, \omega})_\sharp\mathfrak{m}^\omega
 =\sum_{i=1}^{I_\ell}\mathds{1}_{A_{i, \ell}}({\omega})\int_Y\phi_{i, \ell}^{\dist_Y^p}d(\Xi^{-1}_{j, \omega})_\sharp\mathfrak{m}^\omega,
\end{align*}
which are $\mathcal{B}_{\sigma_j}$-measurable functions of $\omega\in U_j$ by \nameref{thm: disintegration}. 
Thus 
from Lemma~\ref{lem: c_p transform of X_p}, we observe each of the functions in \eqref{functions} 
is a $\sigma$-a.e. pointwise limit of $\mathcal{B}_{\sigma_j}$-measurable functions, hence is $\mathcal{B}_{\sigma_j}$-measurable itself.
\end{proof}
\begin{lemma}\label{lem: continuous approximation of strongly measurable}
If $f\in L^0(\sigma_j; \mathcal{Y}_p)$, 
there is a sequence $(f_\ell)_{\ell\in \mathbb{N}}\subset C_b(U_j; \mathcal{Y}_p)$ which converges pointwise $\sigma_j$-a.e. to $f$.
\end{lemma}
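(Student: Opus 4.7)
The strategy is to replace $f$ by a nearby simple function, then replace each simple function by a continuous bounded function, and finally diagonalize to obtain a single sequence converging $\sigma_j$-a.e.\ to $f$.

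By the definition of strong measurability, there exist simple functions $s_n=\sum_{i=1}^{I_n}\mathds{1}_{A_{i,n}}\phi_{i,n}$ (with $A_{i,n}\in\mathcal{B}_{\sigma_j}$ mutually disjoint and $\phi_{i,n}\in \mathcal{Y}_p$) converging to $f$ in $\|\cdot\|_{\mathcal{Y}_p}$ for $\sigma_j$-a.e.\ $\omega$. So the main task will be to produce, for each fixed $n$, a sequence in $C_b(U_j;\mathcal{Y}_p)$ converging $\sigma_j$-a.e.\ to $s_n$; then a standard diagonal argument finishes.

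To approximate a fixed $s_n$, one exploits tightness of $\sigma$ on the Polish space $(\Omega,\dist_\Omega)$ (Ulam's theorem) to choose, for each $m$, compact sets $K_{i,n,m}\subset A_{i,n}\cap U_j$ with $\sigma_j(A_{i,n}\setminus K_{i,n,m})<2^{-m}/I_n$. Setting $K_{n,m}:=\bigcup_{i=1}^{I_n}K_{i,n,m}$, which is compact (hence closed in $U_j$), the disjoint compact pieces sit at strictly positive pairwise distance, so $s_n|_{K_{n,m}}$ is locally constant into $\mathcal{Y}_p$, hence continuous. Applying the Dugundji extension theorem (with metric domain $U_j$, closed subset $K_{n,m}$, and locally convex target $\mathcal{Y}_p$) then yields a continuous extension $g_{n,m}: U_j\to \mathcal{Y}_p$ whose image is contained in $\overline{\mathrm{conv}}(\{\phi_{1,n},\ldots,\phi_{I_n,n}\})$; this set is bounded, so $g_{n,m}\in C_b(U_j;\mathcal{Y}_p)$, and $g_{n,m}\equiv s_n$ on $K_{n,m}$. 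Since the bounds $\sigma_j(U_j\setminus K_{n,m})<2^{-m}$ are summable, the Borel--Cantelli lemma immediately gives $g_{n,m}\to s_n$ pointwise $\sigma_j$-a.e.\ as $m\to \infty$. A diagonalization then picks $m(n)$ so that $g_{n,m(n)}$ agrees with $s_n$ up to $\|\cdot\|_{\mathcal{Y}_p}$-error $1/n$ on a set of measure at least $1-2^{-n}$, and a second application of Borel--Cantelli, combined with $s_n\to f$ a.e., produces $f_n:=g_{n,m(n)}\in C_b(U_j;\mathcal{Y}_p)$ with $f_n\to f$ $\sigma_j$-a.e.

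The main subtlety is the continuous bounded extension step: the target $\mathcal{Y}_p$ is a Banach space that need not be separable, so one cannot directly invoke extension theorems tailored to separable ranges. The resolution is Dugundji's theorem, which applies to any locally convex target and can be arranged to produce an extension whose range lies inside the closed convex hull of the original image; this automatically guarantees the boundedness required for membership in $C_b(U_j;\mathcal{Y}_p)$.
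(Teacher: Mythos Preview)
Your proof is correct, but it takes a somewhat different route from the paper. The paper applies Lusin's theorem directly to $f$ (using that $\mathcal{Y}_p$ is complete and \emph{separable} when $Y$ is locally compact, as established earlier in the paper via $C_0(Y)$ being separable) to obtain compact sets $K_\ell\subset U_j$ with $\sigma_j(U_j\setminus K_\ell)<2^{-\ell}$ on which $f$ itself is continuous, and then invokes Dugundji to extend $f\vert_{K_\ell}$ to a bounded continuous $f_\ell$ on $U_j$; boundedness again comes from the convex-hull property of Dugundji's extension, applied to the compact image $f(K_\ell)$. This avoids the detour through simple functions and the final diagonalization.

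Your approach instead passes through simple approximants $s_n$, uses only inner regularity of $\sigma_j$ on each piece $A_{i,n}$, applies Dugundji to extend $s_n$ from a finite disjoint union of compacts, and then diagonalizes. This is a bit longer but more elementary in that it does not require a Lusin-type theorem for Banach-space-valued maps. One correction, however: your remark that ``$\mathcal{Y}_p$ is a Banach space that need not be separable'' is misplaced here---in the paper's setting $Y$ is locally compact and $\mathcal{Y}_p$ \emph{is} separable (this is used explicitly in the proof of Lemma~\ref{lem: weakly measurable}). So the paper's shorter Lusin-based argument is available; what your route buys is robustness in that it would still go through without that separability.
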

\begin{proof}
By Remark~\ref{V58}, $f$ is a $\mathcal{B}_{\sigma_j}$-measurable map. Then since $\mathcal{Y}_p$ is complete and separable, for each $\ell\in \N$, we may apply~\cite{Bogachev07}*{Theorem 7.1.13},
where $\mathcal{B}_\mu(X)$ in the reference is our $\mathcal{B}_{\sigma_j}$, 
to~$f$ to find a compact set $K_\ell\subset U_j$ such that $\sigma_j(U_j\setminus K_\ell)<2^{-\ell}$ and $f$ restricted to $K_\ell$ is continuous; we may also assume $K_\ell\subset K_{\ell+1}$ for each $\ell\in \mathbb{N}$. Since $\mathcal{Y}_p$ is a normed space it is locally convex, hence the Tietze extension theorem \cite{Dugundji51}*{Theorem 4.1} applies and there is a continuous function $f_\ell: U_j\to \mathcal{Y}_p$ such that $f_\ell= f$ on $K_\ell$. Moreover since $K_\ell$ is compact and $f$ restricted to it is continuous, the image $f(K_\ell)$ is also compact, hence bounded in $\mathcal{Y}_p$. Then~\cite{Dugundji51}*{Theorem 4.1} also ensures that the image $f_\ell(U_j)$ is contained in the convex hull of~$f(K_\ell)$, consequently $f_\ell$ is bounded. Since $\sigma_j(K_\ell)\to \sigma_j(U_j)$ as $\ell\to \infty$, 
it is clear that $f_\ell$ converges pointwise $\sigma_j$-a.e. to $f$,  
finishing the proof.
\end{proof}
We are now ready to prove the duality result. Note carefully that we do not require $\mathfrak{m}$ and $\mathfrak{n}$ to belong to $\mathcal{P}^\sigma_{p, q}(E)$, but only to $\mathcal{P}^\sigma_p(E)$. This will be relevant for Corollary~\ref{cor: disint lsc} below.
\begin{proof}[Proof of Theorem~\ref{thm: main disint}~\eqref{thm: disint duality}]
Recall $r=p/q$, $\mathfrak{m}$, $\mathfrak{n}\in \mathcal{P}^\sigma_p(E)$, and we first assume $(Y, \dist_Y)$ is locally compact.
Let $(\Phi, \Psi)\in \mathcal{A}_{p,E,\sigma}$. Since $\mathfrak{m}^\omega$, $\mathfrak{n}^\omega\in \mathcal{P}_p(\pi^{-1}(\{\omega\}))$ for $\sigma$-a.e. $\omega$, by the Kantorovich duality Theorem~\ref{Kantorovich} for $\mk_p^E$ restricted to $\pi^{-1}(\{\omega\})$, and the dual representation for the $L^r$ norm again (\cite{Folland99}*{Proposition 6.13}) we have 
\begin{align*}
 -\int_\Omega\zeta(\omega)\left(\int_E\Phi(u)d\mathfrak{m}^\omega(u)
+\int_E\Psi(v)d\mathfrak{n}^\omega(v)\right)d\sigma(\omega)
&\leq \int_\Omega\zeta(\omega)\mk_p^Y(\mathfrak{m}^\omega, \mathfrak{n}^\omega)^pd\sigma(\omega)\\
&\leq \left\| \mk_p^E(\mathfrak{m}^\bullet, \mathfrak{n}^\bullet)^p\right\|_{L^{r}(\sigma)}\\
&=\left\| \mk_p^E(\mathfrak{m}^\bullet, \mathfrak{n}^\bullet)\right\|_{L^q(\sigma)}^p=\scrmk_{p, q}(\mathfrak{m}, \mathfrak{n})^p.
\end{align*}

To show the reverse inequality, fix $\varepsilon>0$ and let $\Omega'$ be the set of $\omega\in \Omega$ such that both of~$\mathfrak{m}^\omega, \mathfrak{n}^\omega$ have finite $p$th moment. 
By Lemma~\ref{lem: weakly measurable}, 
for each $j\in \N$ the set-valued mapping~$\overline{F}^{\mathfrak{m}, \mathfrak{n}}_{j, \varepsilon}$ on $U_j$ is nonempty and closed valued $\sigma$-a.e., and $\mathcal{B}_{\sigma_j}$-weakly measurable. Since  $\mathcal{Y}_p$ is separable, by Theorem~\ref{measurableselection} we can find maps $f^j_{\bullet}: U_j\to \mathcal{Y}_p$ that are $\mathcal{B}_{\sigma_j}$-measurable such that $f^j_\omega\in \overline{F}^{\mathfrak{m}, \mathfrak{n}}_{j, \varepsilon}(\omega)$ for $\sigma$-a.e. $\omega\in U_j$,
and by Remark~\ref{V58}, 
this implies $f^j_\bullet\in L^0(\sigma_j; \mathcal{Y}_p)$. 
By Lemma~\ref{lem: c_p transform of X_p} for $\omega\in \Omega'\cap U_j$
\begin{align*}
 -\int_Y (f^j_\omega)^{\dist_Y^p}(t)d(\Xi^{-1}_{j, \omega})_\sharp\mathfrak{m}^\omega(t)-\int_Yf^j_\omega(s)d(\Xi^{-1}_{j, \omega})_\sharp\mathfrak{n}^\omega(s)
 &\geq \mk_p^E(\mathfrak{m}^\omega, \mathfrak{n}^\omega)^p-\varepsilon.
\end{align*}

If $\scrmk_{p, q}(\mathfrak{m}, \mathfrak{n})<\infty$, it is easy to see there exists $\zeta\in \mathcal{Z}_{r', \sigma}$ satisfying
\begin{align*}
 \int_\Omega\zeta(\omega)\mk_p^E(\mathfrak{m}^\omega, \mathfrak{n}^\omega)^pd\sigma(\omega)&> \scrmk_{p, q}(\mathfrak{m}, \mathfrak{n})^p-\varepsilon,
\end{align*}
thus combining with the inequality above and using the properties of a partition of unity we obtain
\begin{align}
\label{eqn: disint dual inequality}
 \sum_{j\in \N}\int_\Omega\chi_j\zeta\cdot\left(-\int_Y (f^j_\bullet)^{\dist_Y^p}(t)d(\Xi^{-1}_{j, \bullet})_\sharp\mathfrak{m}^\bullet(t)-\int_Yf^j_\bullet(s)d(\Xi^{-1}_{j, \bullet})_\sharp\mathfrak{n}^\bullet(s)\right)d\sigma
 &> \scrmk_{p, q}(\mathfrak{m}, \mathfrak{n})^p-2\varepsilon
\end{align}
in the case $p=q$ we may take $\zeta\equiv 1$. 

Now for $\ell\in \N$ and $z \in \R$, let 
\begin{align*}
T_\ell(z)\coloneqq \max\{ \min\{z, \ell\}, -\ell\}=\begin{cases}\min\{z,\ell\}, &\text{if }z\geq 0,\\ \max\{ z,-\ell \}, &\text{if }z<0.\end{cases}
\end{align*}
By a simple calculation,
we see that 
for each $z_1,$ $z_2\in \mathbb{R}$,
the sequence $( T_\ell(z_1)+T_\ell(z_2) )_{\ell\in \mathbb{N}}$ is non-negative and non-decreasing if $z_1+z_2\geq0$,
and non-positive and non-increasing if $z_1+z_2 \leq 0$ with limit $z_1+z_2$, and in particular 
\begin{align}\label{minus}
\left( T_\ell(-(f^j_\omega)^{\dist_Y^p}(t))+T_\ell(-f^j_\omega(s))\right)\leq \dist_Y(t, s)^p
\end{align}
for each $t$,  $s\in Y$, $j\in \N$, and $\omega\in U_j$. 
For each $\omega\in U_j$ define the sets
 \begin{align*}
     E^j_+(\omega)\coloneqq \left\{(t,s)\bigm| f^j_\omega(t)+ (f^j_\omega)^{\dist_Y^p}(s)\leq 0\right\},
     \qquad
          E^j_-(\omega)\coloneqq \left\{(t,s)\bigm| -f^j_\omega(t)- (f^j_\omega)^{\dist_Y^p}(s)\leq 0\right\}
 \end{align*}
then we can see
\begin{align*}
\left(\pm\sum_{j\in \N}\chi_j(\omega)\int_{E^j_\pm(\omega)}\left( T_\ell(-(f^j_\omega)^{\dist_Y^p}(t))+ T_\ell(-f^j_\omega(s))\right)d((\Xi^{-1}_{j, \omega})_\sharp\mathfrak{m}^\omega\otimes(\Xi^{-1}_{j, \omega})_\sharp\mathfrak{n}^\omega)(t, s)\right)_{\ell\in \N}
\end{align*}
are non-negative, non-decreasing sequences for each $\omega\in \Omega'$.
Thus integrating against $\zeta\sigma$ and using monotone convergence 
and using the fact that 
\begin{align*}
T_\ell(-(f^j_\omega)^{\dist_Y^p}(t))+T_\ell(-f^j_\omega(s))=0
\quad
\text{on }
E^j_+(\omega)\cap E^j_-(\omega),
\end{align*}
by~\eqref{eqn: disint dual inequality} if $\ell_0$ is large enough we obtain 
\begin{align}
\begin{split}
&-\sum_{j\in \N}\int_\Omega\chi_j\zeta\cdot\left(\int_Y [-T_{\ell_0}(-f^j_\bullet)]^{\dist_Y^p}(t) d(\Xi^{-1}_{j,\bullet})_\sharp\mathfrak{m}^\bullet(t)
     +\int_Y [-T_{\ell_0}( -f^j_\bullet(s))]d(\Xi^{-1}_{j, \bullet})_\sharp\mathfrak{n}^\bullet(s) \right) d\sigma\\
    &\geq \sum_{j\in \N}\int_\Omega\chi_j\zeta\cdot\left(\int_Y T_{\ell_0}(-(f^j_\bullet)^{\dist_Y^p}(t))d(\Xi^{-1}_{j,\bullet})_\sharp\mathfrak{m}^\bullet(t)
     +\int_Y T_{\ell_0}( -f^j_\bullet(s))d(\Xi^{-1}_{j, \bullet})_\sharp\mathfrak{n}^\bullet(s)\right)d\sigma\\
    &>\scrmk_{p, q}(\mathfrak{m}, \mathfrak{n})^p-2\varepsilon,\label{eqn: Lq/p norm dual estimate 2}
\end{split}
\end{align}
where the inequality in the second line follows from \eqref{minus}, and 
the integration against $\sigma$ is justified by the measurability from by Lemma~\ref{lem: measurability of transform}. 
Let us fix such a $\ell_0$.

By Lemma~\ref{lem: continuous approximation of strongly measurable}, for each $j\in \N$ there exists a sequence $(\Psi_{j, m})_{m\in \N}$ in $C_b(U_j; \mathcal{Y}_p)$ converging pointwise $\sigma_j$-a.e. to $-T_{\ell_0}\circ (-f^j_\bullet)$ in $\left\|\cdot\right\|_{\mathcal{Y}_p}$; we may truncate to assume $\left\|(\Psi_{j, m})_\omega \right\|_{C_b(Y)} \leq 2\ell_0$,
 for all $\omega\in U_j$, 
and by~\cite{KitagawaTakatsu24a}*{Lemma 2.14}, the sequence $(\Psi_{j, m}^{\dist_Y^p})_{j\in \N}$ also satisfies the same bound. Thus 
\begin{align}\label{eqn: fatou bound}
\begin{split}
- 
\sum_{j\in \N}\chi_j(\omega)\zeta(\omega)\left(\int_Y (\Psi_{j, m}^{\dist_Y^p})_\omega d(\Xi^{-1}_{j, \omega})_\sharp\mathfrak{m}^\omega
+\int_Y(\Psi_{j, m})_\omega d(\Xi^{-1}_{j, \omega})_\sharp\mathfrak{n}^\omega\right)
&\geq -4\ell_0\zeta(\omega),
\end{split}
\end{align}
for each $\omega\in \Omega$. Also  by Lemma~\ref{lem: c_p transform of X_p}  
and the local finiteness of the $\chi_j$, we have that
\begin{align}
\begin{split}
 \label{eqn: continuous integrals converge}
&
\lim_{m\to\infty}\sum_{j\in \N}\chi_j\zeta\cdot\left(\int_Y (\Psi_{j, m}^{\dist_Y^p})_\bullet d(\Xi^{-1}_{j, \bullet})_\sharp\mathfrak{m}^\bullet
 +\int_Y(\Psi_{j, m})_\bullet d(\Xi^{-1}_{j, \bullet})_\sharp\mathfrak{n}^\bullet\right)\\
&=
\sum_{j\in \N}\chi_j\zeta\cdot\left(\int_Y [-T_{\ell_0}(-f^j_\bullet)]^{\dist_Y^p}(t)d(\Xi^{-1}_{j, \bullet})_\sharp\mathfrak{m}^\bullet(t)+\int_Y [-T_{\ell_0}(-f^j_\bullet(s))]d(\Xi^{-1}_{j, \bullet})_\sharp\mathfrak{n}^\bullet(s)\right),
\end{split}
 \end{align} 
holds $\sigma$-a.e.  
Since $C_b(\Omega; \mathcal{Y}_p)\subset L^0(\sigma; \mathcal{Y}_p)$  by Remark~\ref{V58}, 
all functions involved can be integrated against $\sigma$ again by Lemma~\ref{lem: measurability of transform}; by~\eqref{eqn: fatou bound} and since $\zeta\in L^{r'}(\sigma)\subset L^1(\sigma)$ we may apply Fatou's lemma, thus combining with \eqref{eqn: Lq/p norm dual estimate 2} and~\eqref{eqn: continuous integrals converge} we have
\begin{align*}
 &\varliminf_{m\to\infty}\left[-\sum_{j\in \N}\int_\Omega\chi_j\zeta\cdot\left(\int_Y (\Psi_{j, m}^{\dist_Y^p})_\bullet d(\Xi^{-1}_{j, \bullet})_\sharp\mathfrak{m}^\bullet
     -\int_Y (\Psi_{j, m})_\bullet d(\Xi^{-1}_{j, \bullet})_\sharp\mathfrak{n}^\bullet \right) d\sigma\right]\\
     &\geq-\int_\Omega\zeta\cdot\varliminf_{m\to\infty}\left(\int_E \sum_{j\in \N}\chi_j(\Psi_{j, m}^{\dist_Y^p})_\bullet\circ \Xi^{-1}_{j, \bullet} d\mathfrak{m}^\bullet
     +\int_E \sum_{j\in \N}\chi_j(\Psi_{j, m})_\bullet \circ\Xi^{-1}_{j, \bullet}d\mathfrak{n}^\bullet \right) d\sigma\\
&> \scrmk_{p, q}(\mathfrak{m}, \mathfrak{n})^p-2\varepsilon.
\end{align*}
Let 
\begin{align}\label{eqn: dual potential def}
    \Phi(u)&\coloneqq\sum_{j\in \N}\chi_j(\pi(u))\cdot(\Psi_{j, m}^{\dist_Y^p})_{\pi(u)}(\Xi^{-1}_{j, \pi(u)}(u)),\quad
    \Psi(v)\coloneqq\sum_{j\in \N}\chi_j(\pi(v))\cdot(\Psi_{j, m})_{\pi(v)}(\Xi^{-1}_{j, \pi(v)}(v)),
\end{align}
for an $m$ sufficiently large, then since $\mathfrak{m}^\omega$, $\mathfrak{n}^\omega$ are supported in $\pi^{-1}(\{\omega\})$ for each $\omega\in \Omega$, we have
\begin{align}
\begin{split}
\label{eqn: almost min}
 -\left(\int_E\zeta\Phi d\mathfrak{m}
     +\int_E\zeta\Psi d\mathfrak{n}\right)
    &=-\sum_{j\in \N}\int_\Omega\chi_j\zeta\cdot\left(\int_Y (\Psi_{j, m}^{\dist_Y^p})_\bullet d(\Xi^{-1}_{j, \bullet})_\sharp\mathfrak{m}^\bullet
     -\int_Y (\Psi_{j, m})_\bullet d(\Xi^{-1}_{j, \bullet})_\sharp\mathfrak{n}^\bullet \right) d\sigma\\
&> \scrmk_{p, q}(\mathfrak{m}, \mathfrak{n})^p-3\varepsilon.
\end{split}
\end{align}
As $\varepsilon>0$ is arbitrary, we will obtain the first equality in Theorem~\ref{thm: main disint}~\eqref{thm: disint duality} when $\scrmk_{p, q}(\mathfrak{m}, \mathfrak{n})<\infty$, if we can verify that $(\Phi, \Psi)\in \mathcal{A}_{p,E,\sigma}$. First, let $(v_n)_{n\in \N}$ be a sequence in $E$ converging to some $v_\infty\in E$. Then by the local finiteness of $\{U_j\}_{j\in \N}$, there is a finite set $J\subset \N$ such that 
\begin{align*}
    \{\pi(v_n)\}_{n\in \N}\cup \{\pi(v_\infty)\}\subset \bigcup_{j\in J}U_j.
\end{align*}
Hence
\begin{align*}
    \left| \Psi(v_n)-\Psi(v_\infty)\right|
   &\leq\sum_{j\in J}\left(\left| \chi_j(\pi(v_n))-\chi_j(\pi(v_\infty))\right|\cdot\left|(\Psi_{j, m})_{\pi(v_n)}(\Xi^{-1}_{j, \pi(v_n)}(v_n))\right|\right.\\
    &+\left| \chi_j(\pi(v_\infty))\right|\cdot\left|(\Psi_{j, m})_{\pi(v_n)}(\Xi^{-1}_{j, \pi(v_n)}(v_n))-(\Psi_{j, m})_{\pi(v_\infty)}(\Xi^{-1}_{j, \pi(v_n)}(v_n))\right|\\
    &+\left| \chi_j(\pi(v_\infty))\right|\cdot\left|(\Psi_{j, m})_{\pi(v_\infty)}(\Xi^{-1}_{j, \pi(v_n)}(v_n))-(\Psi_{j, m})_{\pi(v_\infty)}(\Xi^{-1}_{j, \pi(v_\infty)}(v_n))\right|\\
    &\left.+\left| \chi_j(\pi(v_\infty))\right|\cdot\left|(\Psi_{j, m})_{\pi(v_\infty)}(\Xi^{-1}_{j, \pi(v_\infty)}(v_n))-(\Psi_{j, m})_{\pi(v_\infty)}(\Xi^{-1}_{j, \pi(v_\infty)}(v_\infty))\right|\right)\\
    &\leq \sum_{j\in J}(I_{j, n}+II_{j, n}+III_{j, n}+IV_{j, n}),
\end{align*}
where
\begin{align*}
I_{j, n}
&\coloneqq 2\ell_0
\left| \chi_j(\pi(v_n))-\chi_j(\pi(v_\infty))\right|,\\
II_{j, n}
&\coloneqq \left\|(\Psi_{j, m})_{\pi(v_n)}-(\Psi_{j, m})_{\pi(v_\infty)}\right\|_{\mathcal{Y}_p}(1+\dist_{y_0}(\Xi^{-1}_{j, \pi(v_n)}(v_n))^p),\\
III_{j, n}
&\coloneqq \left|(\Psi_{j, m})_{\pi(v_\infty)}(\Xi^{-1}_{j, \pi(v_n)}(v_n))-(\Psi_{j, m})_{\pi(v_\infty)}(\Xi^{-1}_{j, \pi(v_\infty)}(v_n))\right|,\\
IV_{j, n}
&\coloneqq \left|(\Psi_{j, m})_{\pi(v_\infty)}(\Xi^{-1}_{j, \pi(v_\infty)}(v_n))-(\Psi_{j, m})_{\pi(v_\infty)}(\Xi^{-1}_{j, \pi(v_\infty)}(v_\infty))\right|.
\end{align*}

By continuity of the $\chi_j$, $\pi$, $\Xi^{-1}_{j, \pi(v_\infty)}$, and $(\Psi_{j, m})_{\pi(v_\infty)}$, we see 
\begin{align*}
\lim_{n\to\infty}(I_{j, n}+IV_{j, n})=0
\end{align*}
for each $j\in J$. Since $(v_n)_{n\in\mathbb{N}}$ is a convergent sequence,
\begin{align*}
    \dist_{y_0}(\Xi^{-1}_{j, \pi(v_n)}(v_n))
    &=\dist_E(\Xi_j(\pi(v_n), y_0), v_n)
\end{align*}
is bounded uniformly in $n$ by the continuity of the $\Xi_j$ and $\pi$, then combining with the fact that $(\Psi_{j, m})_\bullet\in C_b(\Omega; \mathcal{Y}_p)$ we see 
\begin{align*}
\lim_{n\to\infty}II_{j, n}=0
\end{align*}
for each $j\in J$. Also, 
\begin{align*}
\dist_Y(\Xi^{-1}_{j, \pi(v_n)}(v_n), \Xi^{-1}_{j, \pi(v_\infty)}(v_n))=\dist_E(v_n, \Xi_j(\pi(v_n), \Xi^{-1}_{j, \pi(v_\infty)}(v_n)))\xrightarrow{n\to\infty} 0
\end{align*}
by the continuity of $\pi$, $\Xi_j$, and $\Xi^{-1}_{j, \pi(v_\infty)}$, hence 
\begin{align*}
\lim_{n\to\infty} III_{j, n}=0.
\end{align*}
Again by the local finiteness of the family $\{\chi_j\}_{j\in \N}$, the sum in the bound for $\left| \Psi(v_n)-\Psi(v_\infty)\right|$ is actually finite, hence we see $\Psi\in C(E)$. Since Lemma~\ref{lem: c_p transform of X_p}~\eqref{eqn: Xp transform conv}  implies $(\Psi_j^{\dist_Y^p})_\bullet$ is continuous with respect to $\left\|\cdot\right\|_{\mathcal{Y}_p}$, a similar argument shows $\Phi\in C(E)$, and the uniform boundedness of the $(\Psi_{j, m})_\bullet$ implies $\Phi$, $\Psi\in C_b(E)$. Finally, if $\omega\coloneqq \pi(u)=\pi(v)$, then
\begin{align*}
    -\Phi(u)-\Psi(v)&=\sum_{j\in \N}\chi_j(\omega)(-(\Psi_{j, m}^{\dist_Y^p})_{\omega}\circ\Xi^{-1}_{j, \omega}(u)-(\Psi_{j, m})_{\omega}\circ\Xi^{-1}_{j, \omega}(v))\\
    &\leq \sum_{j\in \N}\chi_j(\omega)\dist_Y(\Xi^{-1}_{j, \omega}(u), \Xi^{-1}_{j, \omega}(v))^p
    =\dist_E(u, v)^p,
\end{align*}
thus $(\Phi, \Psi)\in \mathcal{A}_{p,E,\sigma}$ as desired.

If $\scrmk_{p, q}(\mathfrak{m}, \mathfrak{n})=\infty$, we can replace $\scrmk_{p, q}(\mathfrak{m}, \mathfrak{n})$ in the above proof starting at~\eqref{eqn: disint dual inequality} by an arbitrary positive number to obtain that the supremum in the first equality of Theorem~\ref{thm: main disint}~\eqref{thm: disint duality} takes the value~$\infty$.

Now let us assume that $(E, \dist_E)$ is locally compact. To show the second equality in Theorem~\ref{thm: main disint}~\eqref{thm: disint duality}, fix $\varepsilon>0$ and take $(\Phi, \Psi)\in \mathcal{A}_{p,E,\sigma}$ defined by~\eqref{eqn: dual potential def}, satisfying~\eqref{eqn: almost min} as above. By definition of $S_p$ and since $\mathfrak{m}^\omega,\mathfrak{n}^\omega$ are supported on $\pi^{-1}(\{\omega\})$ we see that for $\sigma$-a.e. $\omega$, 
\begin{align*}
    -\int_E\Phi d\mathfrak{m}^\omega
     -\int_E\Psi d\mathfrak{n}^\omega
     &\leq -\int_ES_p\Psi d\mathfrak{m}^\omega
     -\int_E\Psi d\mathfrak{n}^\omega
     \leq \mk_p^E(\mathfrak{m}^\omega, \mathfrak{n}^\omega)^p,
\end{align*}
hence 
\begin{align*}
     -\int_\Omega\zeta\cdot\left(\int_ES_p\Psi d\mathfrak{m}^\bullet
     +\int_E\Psi d\mathfrak{n}^\bullet\right)d\sigma
     =&-\int_E\zeta S_p\Psi d\mathfrak{m}
     -\int_E\zeta\Psi d\mathfrak{n}\\
     &\in \left(\scrmk_{p, q}(\mathfrak{m}, \mathfrak{n})^p-3\varepsilon, \scrmk_{p, q}(\mathfrak{m}, \mathfrak{n})^p\right].
\end{align*}
Since $\Phi$ and $\Psi$ are uniformly bounded from below, we can view 
\begin{align*}
-\zeta\cdot\left(
 \int_E
\Psi
d\mathfrak{n}^\bullet+
\int_E
S_p\Psi d\mathfrak{m}^\bullet \right) \sigma
\end{align*}
as a (signed) Borel measure with finite total variation on $\Omega$, then from~\cite{Bogachev07}*{Theorem 7.1.7} we can find a compact set $K'_\varepsilon\subset \Omega$ such that
\begin{align}\label{eqn: outside integral small}
   \left| -\int_{\Omega\setminus K'_\varepsilon}
\zeta\cdot\left(
 \int_E
\Psi
d\mathfrak{n}^\bullet+
\int_E
S_p\Psi d\mathfrak{m}^\bullet \right) d\sigma\right|<\frac{\varepsilon}{2}.
\end{align}
Since $\Omega$ is locally compact, we may cover $K'_\varepsilon$ with a finite number of open sets whose closures are compact. Writing $K_\varepsilon$ for the union of the closures of these neighborhoods, we see $K_\varepsilon$ is also compact and~\eqref{eqn: outside integral small} holds with $K'_\varepsilon$ replaced by $K_\varepsilon^\circ$. 
Now define for $\delta>0$
\begin{align*}
\psi_{\delta, \varepsilon}(\omega)&\coloneqq\min\{1, \delta^{-1}\dist_\Omega(\omega,\Omega\setminus K_\varepsilon)\},\qquad
    \xi_{\delta, \varepsilon}(v)\coloneqq \psi_{\delta, \varepsilon}(\pi(v))\cdot \Psi(v).
\end{align*}
Since $\Psi$ is bounded on $E$ by $2\ell_0$, so is $S_p\Psi$, hence for any $u\in E$ and $\tilde{\varepsilon}>0$ there exists some $v_{\tilde{\varepsilon}}\in \pi^{-1}(\{\pi(u)\})$ such that $S_p\Psi(u)\leq -\dist_E(u, v_{\tilde{\varepsilon}})^p-\Psi(v_{\tilde{\varepsilon}})+\tilde{\varepsilon}$. Thus 
\begin{align*}
    S_p\Psi(u)-S_p\xi_{\delta, \varepsilon}(u)
    &\leq -\dist_E(u, v_{\tilde{\varepsilon}})^p-\Psi(v_{\tilde{\varepsilon}})+\tilde{\varepsilon}+\inf_{v\in \pi^{-1}(\{\pi(u)\})}(\dist_E(u, v)^p+\xi_{\delta, \varepsilon}(v))\\
    &\leq \xi_{\delta, \varepsilon}(v_{\tilde{\varepsilon}})-\Psi(v_{\tilde{\varepsilon}})+\tilde{\varepsilon}\\&\leq 2\ell_0(\psi_{\delta, \varepsilon}(\pi(v_{\tilde{\varepsilon}}))-1)+\tilde{\varepsilon}
    =2\ell_0(\psi_{\delta, \varepsilon}(\pi(u))-1)+\tilde{\varepsilon}.
\end{align*}
Taking $\tilde{\varepsilon}\to 0$ and by an analogous argument reversing the roles of $\Psi$ and~$\xi_{\delta, \varepsilon}$, we obtain
\begin{align*}
    \left| \int_{K_\varepsilon^\circ}
\zeta\cdot\left(
 \int_E S_p\xi_{\delta, \varepsilon} d\mathfrak{m}^\bullet-
\int_E
S_p\Psi d\mathfrak{m}^\bullet \right) d\sigma\right|
&\leq
 2\ell_0\left| \int_{K_\varepsilon^\circ}
\zeta\left| 1-\psi_{\delta, \varepsilon}\right| d\sigma\right|\\
&\leq
2\ell_0\left\|\zeta\mathds{1}_{\{\omega\in K_\varepsilon^\circ\mid 0\leq\dist_\Omega(\omega, \Omega\setminus K_\varepsilon)<\delta\}}\right\|_{L^{r'}(\sigma)}.
\end{align*}
We also find
\begin{align*}
     \left| \int_{K_\varepsilon^\circ}
\zeta\cdot\left(
 \int_E \xi_{\delta, \varepsilon} d\mathfrak{n}^\bullet-
\int_E
\Psi d\mathfrak{n}^\bullet \right) d\sigma\right|
&\leq 
2\ell_0\left\|\zeta\mathds{1}_{\{\omega\in K_\varepsilon^\circ\mid 0\leq\dist_\Omega(\omega, \Omega\setminus K_\varepsilon)<\delta\}}\right\|_{L^{r'}(\sigma)},
\end{align*}
thus if $\delta>0$ is sufficiently small, combining with~\eqref{eqn: outside integral small} and using the definition of $S_p$ implies that
\begin{align*}
    -\int_\Omega
\zeta\cdot\left(
 \int_E
\xi_{\delta, \varepsilon}
d\mathfrak{n}^\bullet+
\int_E
S_p\xi_{\delta, \varepsilon} d\mathfrak{m}^\bullet \right) d\sigma
\in (\scrmk_{p, q}(\mathfrak{m}, \mathfrak{n})^p-4\varepsilon, \scrmk_{p, q}(\mathfrak{m}, \mathfrak{n})^p].
\end{align*}
Since $\varepsilon$ is arbitrary, we need only verify that $\xi_{\delta, \varepsilon}\in \mathcal{X}_p$; note it is clear that $\xi_{\delta, \varepsilon}\in C_b(E)$.

Now since $\{U_j\}_{j\in \N}$ is locally finite, the compact set $K_\varepsilon$ can only intersect a finite number of  sets $\{U_{j_i}\}_{i=1}^{J_I}$. Thus for any fixed $\hat{\varepsilon}>0$, using that $\xi_{\delta, \varepsilon}\equiv 0$ outside of $\pi^{-1}(K_\varepsilon)$,
\begin{align}
\begin{split}
\label{eqn: Zp verify}
    &\left\{v\in E \biggm| \frac{1}{1+\dist_{E,y_0}^p(\pi(v),v)}\left|\xi_{\delta, \varepsilon}(v)\right|\geq \hat{\varepsilon}\right\}\\
    \subset& \left\{v\in \pi^{-1}(K_\varepsilon) \biggm| \sum_{i=1}^I \dfrac{\chi_{j_i}(\pi(v))}{1+\dist_{E,y_0}^p(\pi(v),v)}\psi_{\delta, \varepsilon}(\pi(v))\cdot\left|(\Psi_{j_i, m})_{\pi(v)}(\Xi^{-1}_{j_i, \pi(v)}(v))\right|\geq \hat{\varepsilon}\right\}\\
    \subset& \bigcup_{i=1}^IA_{i},
\end{split}
\end{align}
where
\[
A_i\coloneqq \left\{v\in \pi^{-1}(K_\varepsilon) \biggm| \frac{\chi_{j_i}(\pi(v))}
{1+\dist_{E,y_0}^p(\pi(v),v)}
\left|(\Psi_{j_i, m})_{\pi(v)}(\Xi^{-1}_{j_i, \pi(v)}(v))\right|
\geq \frac{\hat{\varepsilon}}{I}\right\}.
\]
For $1\leq i\leq I$ fixed, let $(v_\ell)_{\ell\in \N}$ be a sequence in $A_{i}$. Then if $\omega_\ell\coloneqq \pi(v_\ell)$, by compactness of $K_\varepsilon$ there exists a subsequence such that $\omega_\ell$ converges to some $\omega_\infty\in K_\varepsilon$. Also since $\chi_{j_i}(\pi(v_\ell))>0$ we have $\omega_\ell\in U_{j_i}$, hence we may define $y_\ell\coloneqq \Xi^{-1}_{j_i, \omega_\ell}(v_\ell)$. Then we have
\begin{align*}
    \chi_{j_i}(\omega_\ell)\left|(\Psi_{j_i, m})_{\omega_\ell}(y_\ell)\right|
    \geq \frac{\hat{\varepsilon}}{I}\left(1+\dist_{E,y_0}^p(\omega_\ell, v_\ell)\right)
    > \frac{\hat{\varepsilon}}{I}\left(\chi_{j_i}(\omega_\ell)+\chi_{j_i}(\omega_\ell)\dist_{y_0}(y_\ell)^p\right),
\end{align*}
since we must have $\chi_{j_i}(\omega_\ell)>0$, this implies
\begin{align*}
    \frac{1}{1+\dist_{y_0}(y_\ell)^p}\left|(\Psi_{j_i, m})_{\omega_\infty}(y_\ell)\right|
    &\geq \frac{\left|(\Psi_{j_i, m})_{\omega_\ell}(y_\ell)\right|}{1+\dist_{y_0}(y_\ell)^p}-\frac{\left|(\Psi_{j_i, m})_{\omega_\ell}(y_\ell)-(\Psi_{j_i, m})_{\omega_\infty}(y_\ell)\right|}{1+\dist_{y_0}(y_\ell)^p}\\
    &\geq \frac{\tilde{\varepsilon}}{I}-\left\| (\Psi_{j_i, m})_{\omega_\ell}-(\Psi_{j_i, m})_{\omega_\infty}\right\|_{\mathcal{Y}_p}\\
    &\geq\frac{\tilde{\varepsilon}}{2I}
\end{align*}
if $\ell$ is large enough. Since $(\Psi_{j_i, m})_{\omega_\infty}\in \mathcal{Y}_p$ there exists a subsequence of $y_\ell$ converging to some $y_\infty\in Y$. Thus by continuity of $\Xi_{j_i}$, we see (the corresponding subsequence of) $v_\ell$ converges to $v_\infty\coloneqq \Xi_{j_i}(\omega_\infty, y_\infty)$ which we easily see belongs to $A_{i}$. Thus as a closed subset of a finite union of compact sets, the first set in~\eqref{eqn: Zp verify} is compact, in particular we see $\xi_{\delta, \varepsilon}\in \mathcal{X}_p$, finishing the proof.
\end{proof}
\section{Further properties of disintegrated Monge--Kantorovich metrics}\label{sec: disint etc}
In this section, we prove some further properties of the metrics $\scrmk_{p, q}$. 
First, we prove that convergence in $\scrmk_{p, q}$ implies weak convergence.
\begin{proposition}\label{prop: disint conv implies weak}
    For any $1\leq p<\infty$ and $1\leq q\leq \infty$, if the sequence $(\mathfrak{m}_\ell)_{\ell\in \N}$ in $\mathcal{P}^\sigma_{p, q}(E)$ converges in $\scrmk_{p, q}$ to some $\mathfrak{m}\in \mathcal{P}^\sigma_{p, q}(E)$, then the sequence converges weakly.
\end{proposition}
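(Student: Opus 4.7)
The plan is to reduce the claim to a bounded convergence argument on the base space $\Omega$, using the disintegration theorem and the already established fact that $\mk_p^E$-convergence implies weak convergence on each fiber.

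Fix $\phi\in C_b(E)$, and for each Borel probability measure $\mathfrak{n}\in\mathcal{P}^\sigma(E)$ define
\[
F_{\mathfrak{n}}(\omega):=\int_E \phi(u)\,d\mathfrak{n}^\omega(u),
\]
so that $|F_{\mathfrak{n}}|\leq\|\phi\|_{C_b(E)}$ and, by the disintegration theorem and Fubini,
\[
\int_E \phi\,d\mathfrak{n}=\int_\Omega F_{\mathfrak{n}}(\omega)\,d\sigma(\omega).
\]
Thus it suffices to show that $F_{\mathfrak{m}_\ell}\to F_{\mathfrak{m}}$ in $L^1(\sigma)$, and because $\|F_{\mathfrak{m}_\ell}\|_{L^\infty(\sigma)}\leq\|\phi\|_{C_b(E)}$, by the standard subsequence argument it is enough to prove $F_{\mathfrak{m}_\ell}\to F_{\mathfrak{m}}$ in $\sigma$-measure.

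The first key step is to show that $\mk_p^E(\mathfrak{m}_\ell^\bullet,\mathfrak{m}^\bullet)\to 0$ in $\sigma$-measure as $\ell\to\infty$. When $q=\infty$ this follows immediately by taking the union over $\ell\in\N$ of the null sets on which $\mk_p^E(\mathfrak{m}_\ell^\omega,\mathfrak{m}^\omega)>\scrmk_{p,\infty}(\mathfrak{m}_\ell,\mathfrak{m})$, which yields even $\sigma$-a.e. convergence. When $q<\infty$ I would invoke Chebyshev's inequality exactly as in the proof of Theorem~\ref{thm: main disint}~\eqref{thm: disint complete}:
\[
\sigma\bigl(\{\omega\in\Omega\mid \mk_p^E(\mathfrak{m}_\ell^\omega,\mathfrak{m}^\omega)\geq\varepsilon\}\bigr)\leq \varepsilon^{-q}\scrmk_{p,q}(\mathfrak{m}_\ell,\mathfrak{m})^q\xrightarrow{\ell\to\infty} 0
\]
for any $\varepsilon>0$.

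The second step passes from $\mk_p^E$-convergence in measure to convergence in measure of $F_{\mathfrak{m}_\ell}$. Given any subsequence of $(\mathfrak{m}_\ell)_{\ell\in\N}$, extract a further subsequence along which $\mk_p^E(\mathfrak{m}_{\ell_k}^\omega,\mathfrak{m}^\omega)\to 0$ for $\sigma$-a.e.\ $\omega$; then Theorem~\ref{thm: wassconv} gives that $\mathfrak{m}_{\ell_k}^\omega\to\mathfrak{m}^\omega$ weakly, so $F_{\mathfrak{m}_{\ell_k}}(\omega)\to F_{\mathfrak{m}}(\omega)$ for $\sigma$-a.e.\ $\omega$. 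Since these integrals are uniformly bounded by $\|\phi\|_{C_b(E)}$, the dominated convergence theorem gives $\int_\Omega F_{\mathfrak{m}_{\ell_k}}\,d\sigma\to\int_\Omega F_{\mathfrak{m}}\,d\sigma$. Every subsequence having a further convergent subsequence with the same limit yields convergence of the full sequence, proving weak convergence of $\mathfrak{m}_\ell$ to $\mathfrak{m}$ on $E$.

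The only genuinely non-routine input is the measure-theoretic transition from $\scrmk_{p,q}$-convergence to fiberwise $\mk_p^E$-convergence in measure, and this is precisely what the Chebyshev/subsequence argument from the completeness proof supplies; after that the proof is a clean application of disintegration plus bounded convergence, so I do not anticipate a significant obstacle.
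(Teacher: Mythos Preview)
Your proposal is correct and follows essentially the same route as the paper: convergence in $\scrmk_{p,q}$ gives (via Chebyshev when $q<\infty$) convergence in $\sigma$-measure of $\omega\mapsto\mk_p^E(\mathfrak{m}_\ell^\omega,\mathfrak{m}^\omega)$, hence along sub-subsequences $\sigma$-a.e.\ fiberwise $\mk_p^E$-convergence, then Theorem~\ref{thm: wassconv} plus dominated convergence and the subsequence principle finish it. The only cosmetic point is that your ``second step'' announces convergence in measure of $F_{\mathfrak{m}_\ell}$ but then (correctly) proceeds directly to the integral limit via the subsequence argument, exactly as the paper does.
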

\begin{proof}
Any subsequence of $(\mathfrak{m}_\ell)_{\ell\in \N}$ has a further subsequence (not relabeled) such that 
the sequence $(\mk^E_p(\mathfrak{m}^\omega_\ell, \mathfrak{m}^\omega))_{\ell\in \mathbb{N}}$ converges to zero for $\sigma$-a.e. $\omega$. Then for any $\phi\in C_b(E)$, by Theorem~\ref{thm: wassconv} we have 
    \begin{align*}
    \lim_{\ell\to\infty}\int_E\phi d\mathfrak{m}^\omega_\ell=\int_E\phi d\mathfrak{m}^\omega,
    \end{align*}
    then by dominated convergence, 
    \begin{align*}
    \lim_{\ell\to\infty}\int_E\phi d\mathfrak{m}_\ell=\int_E\phi d\mathfrak{m}.
    \end{align*}
    Since this holds for any subsequences, we have weak convergence of the whole sequence to $\mathfrak{m}$.
\end{proof}
Next, duality will yield that $\scrmk_{p, q}$ is lower semi-continuous with respect to weak convergence on ~$\mathcal{P}^\sigma_p(E)$, at least when $E$ is locally compact.
\begin{corollary}\label{cor: disint lsc}
    If $(E, \dist_E)$ is locally compact, $p\leq q$,  and $(\mathfrak{m}_\ell)_{\ell\in \N}$ and $(\mathfrak{n}_\ell)_{\ell\in \N}$ are sequences in~$\mathcal{P}^\sigma_p(E)$ that weakly converge to $\mathfrak{m}$ and $\mathfrak{n}\in \mathcal{P}^\sigma_p(E)$ respectively, then
    \begin{align*}
        \scrmk_{p, q}(\mathfrak{m}, \mathfrak{n})\leq \varliminf_{\ell\to\infty}\scrmk_{p, q}(\mathfrak{m}_\ell, \mathfrak{n}_\ell).
    \end{align*}
\end{corollary}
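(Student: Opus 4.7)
The plan is to derive the desired lower semicontinuity directly from the dual representation in Theorem~\ref{thm: main disint}~\eqref{thm: disint duality}. To do so, I would first check that the local compactness of $(E, \dist_E)$ transfers to $(Y, \dist_Y)$: since $\pi^{-1}(U_j)$ is open in $E$ (hence locally compact) and isometric to $U_j\times Y$ with the product metric via $\Xi_j$, any compact neighborhood of $(\omega_0, y_0)$ in the product contains a set of the form $V\times W$ for open $V\ni\omega_0$, $W\ni y_0$; projecting that compact neighborhood onto $Y$ yields a compact set containing $W$, so $y_0$ has a compact neighborhood in $Y$. This ensures the first equality in Theorem~\ref{thm: main disint}~\eqref{thm: disint duality}, the one with $C_b(E)$ potentials, is applicable.

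With the $C_b$-valued dual formulation in hand, the argument is a standard consequence of duality. Fix any admissible triple $(\Phi, \Psi)\in \mathcal{A}_{p, E, \sigma}$ and $\zeta\in \mathcal{Z}_{r', \sigma}$. Since $\Phi, \Psi\in C_b(E)$, $\zeta\in C_b(\Omega)$, and $\pi$ is continuous, the functions $(\zeta\circ \pi)\Phi$ and $(\zeta\circ \pi)\Psi$ lie in $C_b(E)$. The weak convergence of $(\mathfrak{m}_\ell)_{\ell\in\N}$ and $(\mathfrak{n}_\ell)_{\ell\in\N}$ to $\mathfrak{m}$ and $\mathfrak{n}$ respectively then gives
\[
-\int_E(\zeta\circ \pi)\Phi\, d\mathfrak{m} - \int_E (\zeta\circ \pi)\Psi\, d\mathfrak{n} = \lim_{\ell\to\infty}\left[-\int_E(\zeta\circ\pi)\Phi\, d\mathfrak{m}_\ell - \int_E(\zeta\circ\pi)\Psi\, d\mathfrak{n}_\ell\right].
\]
For each $\ell$ the bracketed quantity is bounded above by $\scrmk_{p,q}(\mathfrak{m}_\ell, \mathfrak{n}_\ell)^p$ by Theorem~\ref{thm: main disint}~\eqref{thm: disint duality}, so the displayed limit is at most $\liminf_{\ell\to\infty}\scrmk_{p,q}(\mathfrak{m}_\ell, \mathfrak{n}_\ell)^p$. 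Taking the supremum of the left-hand side over all admissible $(\Phi, \Psi, \zeta)$ and invoking the dual formula once more yields $\scrmk_{p,q}(\mathfrak{m},\mathfrak{n})^p \leq \liminf_{\ell\to\infty}\scrmk_{p,q}(\mathfrak{m}_\ell, \mathfrak{n}_\ell)^p$, and extracting $p$-th roots concludes the argument.

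Because this reduces to the classical ``supremum of weakly continuous functionals is weakly lower semicontinuous'' principle, no genuine obstacle arises once the duality is in hand; the only point requiring care is the transfer of local compactness from $E$ to $Y$, which is needed to legitimately apply the $C_b$-valued dual representation. Note that it is essential here to use the first (rather than the second) formulation from Theorem~\ref{thm: main disint}~\eqref{thm: disint duality}, because the fiberwise transform $S_p\Psi$ is only Borel and would not interact well with weak convergence.
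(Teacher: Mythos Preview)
Your proof is correct and follows essentially the same route as the paper's: fix $(\Phi,\Psi)\in\mathcal{A}_{p,E,\sigma}$ and $\zeta\in\mathcal{Z}_{r',\sigma}$, use that $(\zeta\circ\pi)\Phi,(\zeta\circ\pi)\Psi\in C_b(E)$ to pass to the limit under weak convergence, bound by $\scrmk_{p,q}(\mathfrak{m}_\ell,\mathfrak{n}_\ell)^p$ via duality, and then take the supremum. Your explicit verification that local compactness of $E$ transfers to $Y$ (so that the first duality formula in Theorem~\ref{thm: main disint}~\eqref{thm: disint duality} is indeed applicable) is a nice point of care that the paper leaves implicit.
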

\begin{proof}
    Fix $\zeta\in \mathcal{Z}_{r', \sigma}$ and $(\Phi, \Psi)\in \mathcal{A}_{p,E,\sigma}$, then since $(\zeta\circ \pi)\Phi$, $(\zeta\circ \pi)\Psi\in C_b(E)$ we have
\begin{align*}
\left(-\int_E(\zeta\circ \pi)\Phi d\mathfrak{m}-\int_E(\zeta\circ \pi)\Psi d\mathfrak{n}\right)^{\frac{1}{p}}
&=\lim_{\ell\to\infty} \left(-\int_E(\zeta\circ \pi)\Phi d\mathfrak{m}_\ell-\int_E(\zeta\circ \pi)\Psi d\mathfrak{n}_\ell\right)^{\frac{1}{p}}\\
    &\leq \varliminf_{\ell\to\infty} \scrmk_{p, q}(\mathfrak{m}_\ell, \mathfrak{n}_\ell),
\end{align*}
where we have used Theorem~\ref{thm: main disint}~\eqref{thm: disint duality} in the last line. Taking a supremum over $\zeta\in \mathcal{Z}_{r', \sigma}$ and $(\Phi, \Psi)\in \mathcal{A}_{p,E,\sigma}$ and using Theorem~\ref{thm: main disint}~\eqref{thm: disint duality} again yields the desired lower semi-continuous.
\end{proof}
Now we show that $\scrmk_{p, p}$ arises from a certain optimal transport problem on $E^2$.
\begin{definition}
For $1\leq p<\infty$, define 
$\mathfrak{c}_p: E^2 \to [0,\infty]$ by 
\[
\mathfrak{c}_p(u, v)
\coloneqq  \begin{cases}
 \dist_E(u,v)^p,& \text{if }\pi(u)=\pi(v),\\
 \infty, &\text{else}.
 \end{cases}
\]
For $\mathfrak{m}$,  $\mathfrak{n}\in\mathcal{P}^\sigma_p(E)$,
set 
\[
\mathfrak{C}_p(\mathfrak{m}, \mathfrak{n})
\coloneqq \inf_{\Gamma\in \Pi(\mathfrak{m}, \mathfrak{n})}
\| \mathfrak{c}_p\|_{L^{p}(\Gamma)}\in[0,\infty].
\]
\end{definition}
\begin{proposition}\label{OT}
For $\mathfrak{m}$, $\mathfrak{n}\in\mathcal{P}^\sigma_{p, p}(E)$,
$\mathfrak{C}_p(\mathfrak{m}, \mathfrak{n})$ is finite and 
\[
\mathfrak{C}_p(\mathfrak{m}, \mathfrak{n})
=\scrmk_{p, p}(\mathfrak{m}, \mathfrak{n})^p.
\]
\end{proposition}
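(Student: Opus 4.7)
The plan is to prove the two inequalities separately, using different tools for each.

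For the upper bound $\mathfrak{C}_p(\mathfrak{m}, \mathfrak{n}) \leq \scrmk_{p,p}(\mathfrak{m}, \mathfrak{n})^p$, I would construct an explicit competitor by gluing together fiberwise $p$-optimal couplings. For $\sigma$-a.e.~$\omega$, the measures $(\Xi_{j,\omega}^{-1})_\sharp\mathfrak{m}^\omega$ and $(\Xi_{j,\omega}^{-1})_\sharp\mathfrak{n}^\omega$ lie in $\mathcal{P}_p(Y)$, so the set of $p$-optimal couplings is nonempty and closed in $(\mathcal{P}_p(Y^2), \mk_p^{Y^2})$. Working on $V_j \subset U_j$ through the local trivializations and using Lemma~\ref{lem: pushforward measurable} together with~\cite{AmbrosioGigliSavare08}*{Lemma 12.4.7} to handle measurability of the relevant integral functionals, a set-valued map analogous to the one treated in Claims~1 and~2 of the proof of Theorem~\ref{thm: main disint}~\eqref{thm: disint geodesic} can be shown to be closed, nonempty, and $\mathcal{B}_{\sigma_j}$-weakly measurable. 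Theorem~\ref{measurableselection} then yields a $\mathcal{B}_\sigma$-measurable selection $\omega \mapsto \gamma^\omega \in \mathcal{P}(E^2)$ of fiberwise $p$-optimal couplings supported in $\pi^{-1}(\{\omega\})^2$. Arguing as in Remark~\ref{V58}, the prescription
\[
\Gamma(A) := \int_\Omega \gamma^\omega(A)\,d\sigma(\omega)
\]
defines an element of $\mathcal{P}(E^2)$ whose marginals are $\mathfrak{m}$ and $\mathfrak{n}$, and since $\gamma^\omega$ is concentrated on the diagonal fiber product we have $\mathfrak{c}_p(u,v)=\dist_E(u,v)^p$ on $\spt(\Gamma)$. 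Thus
\[
\int_{E^2} \mathfrak{c}_p\, d\Gamma = \int_\Omega \mk_p^E(\mathfrak{m}^\omega, \mathfrak{n}^\omega)^p\, d\sigma(\omega) = \scrmk_{p,p}(\mathfrak{m}, \mathfrak{n})^p < \infty,
\]
giving both finiteness of $\mathfrak{C}_p(\mathfrak{m}, \mathfrak{n})$ and the desired upper bound.

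For the lower bound, I would take any $\Gamma \in \Pi(\mathfrak{m}, \mathfrak{n})$ with $\|\mathfrak{c}_p\|_{L^p(\Gamma)} < \infty$; necessarily $\Gamma$ is concentrated on $\{(u,v)\in E^2 : \pi(u)=\pi(v)\}$. Consider the map $\Pi^\pi \colon E^2 \to \Omega$ defined by $\Pi^\pi(u,v):=\pi(u)$. Since $(\Pi^\pi)_\sharp\Gamma = \pi_\sharp\mathfrak{m} = \sigma$, the Disintegration Theorem yields $\Gamma = \Gamma^\bullet \otimes \sigma$ with $\Gamma^\omega$ supported in $\pi^{-1}(\{\omega\})^2$ for $\sigma$-a.e.~$\omega$. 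For any Borel $A \subset E$,
\[
\int_\Omega (\pi_1)_\sharp\Gamma^\omega(A)\, d\sigma(\omega) = \Gamma(\pi_1^{-1}(A)) = \mathfrak{m}(A) = \int_\Omega \mathfrak{m}^\omega(A)\, d\sigma(\omega),
\]
exhibiting $\omega \mapsto (\pi_1)_\sharp\Gamma^\omega$ as a disintegration of $\mathfrak{m}$ with respect to $\pi$, so by uniqueness $(\pi_1)_\sharp\Gamma^\omega = \mathfrak{m}^\omega$ $\sigma$-a.e., and analogously $(\pi_2)_\sharp\Gamma^\omega = \mathfrak{n}^\omega$ $\sigma$-a.e. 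In particular $\Gamma^\omega \in \Pi(\mathfrak{m}^\omega, \mathfrak{n}^\omega)$, so
\[
\int_{E^2} \mathfrak{c}_p\, d\Gamma = \int_\Omega \int_{E^2} \dist_E^p\, d\Gamma^\omega\, d\sigma(\omega) \geq \int_\Omega \mk_p^E(\mathfrak{m}^\omega, \mathfrak{n}^\omega)^p\, d\sigma(\omega) = \scrmk_{p,p}(\mathfrak{m}, \mathfrak{n})^p,
\]
and taking the infimum over $\Gamma$ completes the proof.

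The main obstacle I anticipate is the measurable selection argument in the upper bound: verifying weak measurability of the set-valued map of fiberwise optimal couplings, since this requires checking measurability of an optimality constraint involving $\omega \mapsto \mk_p^E(\mathfrak{m}^\omega, \mathfrak{n}^\omega)$. However, this is essentially parallel to the argument already carried out for Claim~2 in the proof of Theorem~\ref{thm: main disint}~\eqref{thm: disint geodesic}, and follows the same template of expressing the relevant sets via countable intersections and applying~\cite{AmbrosioGigliSavare08}*{Lemma 12.4.7}. The lower bound is essentially a clean application of the uniqueness clause in the Disintegration Theorem, once one observes that the finite cost constraint forces concentration on matched fibers.
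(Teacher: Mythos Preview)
Your proof is correct, and the lower bound argument is essentially the paper's: both disintegrate an optimal (or arbitrary finite-cost) coupling $\Gamma$ over $\Omega$, check that the pieces $\Gamma^\omega$ lie in $\Pi(\mathfrak{m}^\omega,\mathfrak{n}^\omega)$ via uniqueness of disintegration, and conclude by bounding each fiber cost below by $\mk_p^E(\mathfrak{m}^\omega,\mathfrak{n}^\omega)^p$. The paper disintegrates with respect to $\pi^2=(\pi,\pi)$ after first checking $\pi^2_\sharp\gamma=(\Id_\Omega\times\Id_\Omega)_\sharp\sigma$, while you disintegrate with respect to $\Pi^\pi=\pi\circ\pi_1$ directly; these are cosmetic reformulations of the same idea.

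Where you genuinely diverge is the upper bound $\mathfrak{C}_p\leq\scrmk_{p,p}^p$. You go the primal route: build a competitor $\Gamma=\int\gamma^\omega\,d\sigma$ by measurably selecting fiberwise $p$-optimal couplings, invoking Theorem~\ref{measurableselection} after verifying weak measurability in the style of Claims~1--2 of the geodesics proof. This works, but the paper sidesteps all of that by going dual: since any $(\Phi,\Psi)\in\mathcal{A}_{p,E,\sigma}$ satisfies $-\Phi(u)-\Psi(v)\leq\mathfrak{c}_p(u,v)$, Kantorovich duality for the lower-semicontinuous cost $\mathfrak{c}_p$ gives
\[
\mathfrak{C}_p(\mathfrak{m},\mathfrak{n})=\sup_{(\Phi,\Psi)\in\mathcal{A}_{p,E,\sigma}}\int_\Omega\left(-\int_E\Phi\,d\mathfrak{m}^\bullet-\int_E\Psi\,d\mathfrak{n}^\bullet\right)d\sigma\leq\int_\Omega\mk_p^E(\mathfrak{m}^\omega,\mathfrak{n}^\omega)^p\,d\sigma(\omega),
\]
the last step being fiberwise Kantorovich duality. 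This is a two-line argument versus your measurable-selection apparatus. Your approach has the merit of actually producing an optimal coupling for $\mathfrak{C}_p$ (namely the glued $\Gamma$), which the duality argument does not; but for the bare equality the paper's route is considerably lighter.
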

\begin{proof}
Fix $\mathfrak{m}$, $\mathfrak{n}\in \mathcal{P}^\sigma_{p, p}(E)$.
For any $(\Phi, \Psi)\in \mathcal{A}_{p,E,\sigma}$,  by definition we have 
$\Phi$, $\Psi\in C_b(E)$ and  
    $-\Phi(u)-\Psi(v)\leq\mathfrak{c}_p(u,v)$.
Since $(E, \dist_E)$ is a complete, separable metric space, the Kantorovich duality Theorem~\ref{Kantorovich} (we have stated Theorem~\ref{Kantorovich} only for cost functions of the form $\dist_Y^p$, however the same result holds for any lower semi-continuous cost function bounded from below, hence for $\mathfrak{c}_p$, see~\cite{Villani09}*{Theorem 5.10}) yields
\begin{align*}
\mathfrak{C}_p(\mathfrak{m},\mathfrak{n}) 
=\sup_{(\Phi,\Psi)\in \mathcal{A}_{p,E,\sigma}}
\left(
-\int_E\Phi d\mathfrak{m}
-\int_E\Psi d\mathfrak{n}
\right)
&=\sup_{(\Phi,\Psi)\in \mathcal{A}_{p,E,\sigma}}
\int_{\Omega}
\left(
-\int_E\Phi d\mathfrak{m}^{\bullet}
-\int_E\Psi d\mathfrak{n}^{\bullet}
\right)
d\sigma\\
&\leq 
\int_\Omega
\mk_p^E(\mathfrak{m}^{\omega},\mathfrak{n}^{\omega})^p
d\sigma(\omega)
=\scrmk_{p,p}(\mathfrak{m},\mathfrak{n})^p\\
&<\infty.
\end{align*}
Thus
$\mathfrak{C}_p(\mathfrak{m},\mathfrak{n})$ is finite and
$\mathfrak{C}_p(\mathfrak{m},\mathfrak{n})\leq \scrmk_{p,p}(\mathfrak{m},\mathfrak{n})^p$.

On the other hand, 
since $\mathfrak{c}_p$ is lower semi-continuous and non-negative, 
by \cite{Villani09}*{Theorem~4.1}
there exists $\gamma\in \Pi(\mathfrak{m},\mathfrak{n})$ such that 
\[
\mathfrak{C}_p (\mathfrak{m},\mathfrak{n})
=\int_{E^2}
\mathfrak{c}_p d\gamma,
\]
since $\mathfrak{C}_p (\mathfrak{m},\mathfrak{n})<\infty$ by above, we find that
\begin{align*}
 \gamma(\{ (u, v)\mid \pi(u)\neq\pi(v)\})=0.
\end{align*}
Let $\pi^2: 
E^2 
\to \Omega^2
$
be defined by $\pi^2(u, v)\coloneqq (\pi(u), \pi(v))$, 
then by the above, for $\mathcal{B}_\sigma$-measurable sets $A$, $A'\subset \Omega$ we have
\begin{align*}
 \pi^2
_\sharp\gamma(A\times A')
&=\gamma(\{(u, v)\mid\pi(u)\in A,\ \pi(v)\in A',\ \pi(u)=\pi(v)\})\\
&=\gamma(\{(u, v)\mid\pi(u), \pi(v)\in A\cap A'\})
=\gamma(\pi^{-1}(A\cap A')\times E)\\
&=\mathfrak{m}(\pi^{-1}(A\cap A'))
 =\sigma(A\cap A')
 =(\Id_\Omega\times \Id_\Omega)_\sharp\sigma(A\times A'),
\end{align*}
hence
$\pi^2_\sharp\gamma=(\Id_\Omega\times \Id_\Omega)_\sharp\sigma$.
Consider the disintegration of $\gamma$ with respect to ~$\pi^2$ given by
\[
\gamma
=\gamma^{(\bullet,\ast)}\otimes \pi^2_\sharp\gamma
=\gamma^{(\bullet,\ast)}\otimes (\Id_\Omega\times \Id_\Omega)_\sharp\sigma.
\]
For $\phi\in C_b(E^2)$,
the function on $\Omega^2$ (resp.\,$\Omega$) defined by
 \[
(\omega, \omega') \mapsto \int_{E^2}\phi d\gamma^{(\omega, \omega')}
\qquad
\left(
\text{resp.}\ 
\omega \mapsto \int_{E^2}\phi d\gamma^{(\omega, \omega)}
\right)
 \]
is Borel by \nameref{thm: disintegration},
and 
\begin{align}\label{disintegration}
\int_{\Omega^2} \int_{E^2}\phi d\gamma^{(\omega, \omega')} d\pi^2_\sharp\gamma(\omega,\omega')
=
\int_\Omega \int_{E^2}\phi d\gamma^{(\omega, \omega)} d\sigma(\omega).
\end{align}

Now for any Borel set $E'\subset E$ and $\Omega'\in \mathcal{B}_\sigma$, since $\gamma\in \Pi(\mathfrak{m}, \mathfrak{n})$ we have
\begin{align*}
\int_{\Omega'}
\mathfrak{m}^\bullet(E')d\sigma
&=\int_\Omega\int_E\mathds{1}_{\Omega'}(\pi(u))
\mathds{1}_{E'}(u) d\mathfrak{m}^\bullet(u)d\sigma
=\int_E\mathds{1}_{\Omega'}(\pi(u))
\mathds{1}_{E'}(u) d\mathfrak{m}(u)\\
&=
\int_{ E^2}\mathds{1}_{\Omega'}(\pi(u))
\mathds{1}_{E'}(u) d\gamma(u, v) 
=
\int_{\Omega}
\int_{E^2}\mathds{1}_{\Omega'}(\pi(u))
\mathds{1}_{E'}(u) d\gamma^{(\omega, \omega)}(u, v)
d\sigma(\omega)\\
&=\int_{\Omega'}
\int_{E^2}
\mathds{1}_{E'\times E}(u, v) d\gamma^{(\omega, \omega)}(u, v)
d\sigma(\omega)
=\int_{\Omega'}
\gamma^{(\omega, \omega)}(E'\times E)
d\sigma(\omega).
\end{align*}
Since $E'$ and $\Omega'$ are arbitrary (and using a similar argument for $\mathfrak{n}$) this implies that for $\sigma$-a.e.~$\omega\in \Omega$, we have 
$
\gamma^{(\omega, \omega)}
\in \Pi(\mathfrak{m}^\omega,\mathfrak{n}^\omega)$.

Finally, using this claim with the disintegration~\eqref{disintegration}, we have
\begin{align*}
\scrmk_{p, p}(\mathfrak{m}, \mathfrak{n})^p
&=
 \int_\Omega
\mk_p^E(\mathfrak{m}^{\omega}, \mathfrak{n}^\omega)^pd\sigma(\omega)\\
&\leq \int_\Omega\int_{E^2} \dist_E(u,v)^pd \gamma^{(\omega, \omega)}(u, v)d\sigma(\omega)
=\int_\Omega\int_{E^2} \mathfrak{c}_p( u, v )d \gamma^{(\omega, \omega)}(u, v)d\sigma(\omega)\\
&=
\int_{E^2} \mathfrak{c}_p(u, v)d\gamma (u,v)=\mathfrak{C}_p(\mathfrak{m}, \mathfrak{n}),
\end{align*}
completing the proof of the lemma.
\end{proof}
We also show that in the case of a trivial bundle where the fiber equals the base space, the set of $p$-optimal couplings is closed in $\scrmk_{p, q}$ for $p\leq q$.
\begin{proposition}\label{prop: optimal closed}
    Suppose $(\Omega, \dist_\Omega)$ is a complete, separable metric space, we have the trivial bundle $E=\Omega\times \Omega$. Fix $1\leq p<\infty$ and some $\sigma\in \mathcal{P}_p(\Omega)$, and let us denote by $\Pi_{\mathrm{opt}}(\sigma)$ the set of all $p$-optimal couplings between $\sigma$ and any other measure in $\mathcal{P}_p(\Omega)$. Then if $p\leq q\leq \infty$, the set $\Pi_{\mathrm{opt}}(\sigma)$  is closed with respect to $\scrmk_{p, q}$ in $\mathcal{P}^\sigma_{p, q}(\Omega\times \Omega)$.
\end{proposition}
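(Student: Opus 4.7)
Suppose $(\mathfrak{m}_\ell)_{\ell\in\N}\subset \Pi_{\mathrm{opt}}(\sigma)$ converges to some $\mathfrak{m}\in\mathcal{P}^\sigma_{p,q}(\Omega\times\Omega)$ in $\scrmk_{p,q}$; write $\nu_\ell:=\pi_2{}_\sharp\mathfrak{m}_\ell$ and $\nu:=\pi_2{}_\sharp\mathfrak{m}$, where $\pi_2$ is projection onto the second coordinate. By assumption $\mathfrak{m}_\ell\in\Pi(\sigma,\nu_\ell)$ with $\nu_\ell\in\mathcal{P}_p(\Omega)$, and the membership $\mathfrak{m}\in\mathcal{P}^\sigma_{p,q}(\Omega\times\Omega)$ together with $q\geq p$ forces $\nu\in\mathcal{P}_p(\Omega)$ (since $\int_\Omega\dist_\Omega(y_0,t)^pd\nu(t)$ equals the $L^p(\sigma)$-norm of $\mk_p^E(\delta_{E,y_0}^\bullet,\mathfrak{m}^\bullet)^p$ for the trivial bundle). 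The strategy is to show first that $\nu_\ell\to\nu$ in $\mk_p^\Omega$, and then use weak convergence plus lower semicontinuity of the transport cost to upgrade this to $p$-optimality of $\mathfrak{m}$.

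\textbf{Step 1: convergence of second marginals in $\mk_p^\Omega$.} By H\"older's inequality (as noted after the triangle inequality in the proof of Theorem~\ref{thm: main disint}\eqref{thm: disint complete}), $\scrmk_{p,p}(\mathfrak{m}_\ell,\mathfrak{m})\leq\scrmk_{p,q}(\mathfrak{m}_\ell,\mathfrak{m})\to 0$. By Proposition~\ref{OT}, $\mathfrak{C}_p(\mathfrak{m}_\ell,\mathfrak{m})=\scrmk_{p,p}(\mathfrak{m}_\ell,\mathfrak{m})^p$, and since $\mathfrak{c}_p$ is lower semicontinuous and non-negative this infimum is attained by some $\Gamma_\ell\in\Pi(\mathfrak{m}_\ell,\mathfrak{m})$. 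Because $\mathfrak{c}_p=\infty$ off the set $\{((x_1,x_2),(y_1,y_2))\in(\Omega\times\Omega)^2\mid x_1=y_1\}$, the finiteness of $\int\mathfrak{c}_pd\Gamma_\ell$ forces $\Gamma_\ell$ to be concentrated on this set. Pushing $\Gamma_\ell$ forward under the map $((x_1,x_2),(y_1,y_2))\mapsto(x_2,y_2)$ produces a coupling in $\Pi(\nu_\ell,\nu)$ whose $\dist_\Omega^p$-cost equals $\int\mathfrak{c}_pd\Gamma_\ell=\mathfrak{C}_p(\mathfrak{m}_\ell,\mathfrak{m})\to 0$. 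Thus $\mk_p^\Omega(\nu_\ell,\nu)\to 0$.

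\textbf{Step 2: optimality via lower semicontinuity.} By Proposition~\ref{prop: disint conv implies weak}, $\mathfrak{m}_\ell\to\mathfrak{m}$ weakly in $\mathcal{P}(\Omega\times\Omega)$. The cost $\dist_\Omega^p$ is continuous and non-negative on $\Omega\times\Omega$, so Portmanteau gives
\begin{equation*}
\int_{\Omega\times\Omega}\dist_\Omega(x_1,x_2)^pd\mathfrak{m}(x_1,x_2)\leq\liminf_{\ell\to\infty}\int_{\Omega\times\Omega}\dist_\Omega(x_1,x_2)^pd\mathfrak{m}_\ell(x_1,x_2)=\liminf_{\ell\to\infty}\mk_p^\Omega(\sigma,\nu_\ell)^p,
\end{equation*}
where the final equality uses optimality of each $\mathfrak{m}_\ell$. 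By Step 1 combined with Theorem~\ref{thm: wassconv} (continuity of $\mk_p^\Omega$), the right-hand side equals $\mk_p^\Omega(\sigma,\nu)^p$. The reverse inequality is automatic from $\mathfrak{m}\in\Pi(\sigma,\nu)$, so $\mathfrak{m}$ achieves the $\mk_p^\Omega$ value and is a $p$-optimal coupling between $\sigma$ and $\nu$; hence $\mathfrak{m}\in\Pi_{\mathrm{opt}}(\sigma)$.

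\textbf{Main obstacle.} The only delicate step is Step 1, namely converting $\scrmk_{p,q}$-convergence into $\mk_p^\Omega$-convergence of the second marginals without invoking a measurable selection of fiberwise optimal couplings by hand. The clean route is Proposition~\ref{OT}, which packages the disintegrated metric as a standard optimal transport problem on $E\times E$ with cost $\mathfrak{c}_p$ and thereby provides a global coupling whose projection onto the second coordinates does exactly what is needed. Everything else is essentially bookkeeping.
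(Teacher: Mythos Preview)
Your proof is correct and takes a genuinely different route from the paper's. The paper establishes a uniform bound on the transport cost $\int_{\Omega^2}\dist_\Omega(\omega',\omega)^p\,d\mathfrak{m}_\ell$ via H\"older's inequality (using $p\le q$), obtains weak convergence from Proposition~\ref{prop: disint conv implies weak}, and then invokes the stability theorem for optimal couplings \cite{Villani09}*{Theorem~5.20} as a black box to conclude that the weak limit is optimal. You instead prove the stronger intermediate fact that the second marginals converge in $\mk_p^\Omega$---something the paper never establishes---by exploiting Proposition~\ref{OT} to produce a single coupling $\Gamma_\ell$ on $(\Omega\times\Omega)^2$ witnessing $\mathfrak{C}_p(\mathfrak{m}_\ell,\mathfrak{m})\to 0$ and projecting onto second coordinates; the elementary Portmanteau argument in Step~2 then replaces the appeal to Villani's stability theorem entirely. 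The paper's route is shorter once the external reference is granted; yours is self-contained (using only results proved in the paper) and yields the extra information that pushforward to the second marginal is $\scrmk_{p,q}$-to-$\mk_p^\Omega$ continuous on $\Pi_{\mathrm{opt}}(\sigma)$. One small remark: your claim that $\nu\in\mathcal{P}_p(\Omega)$ is correct but deserves the explicit line $\int_\Omega\dist_{y_0}^p\,d\nu=\scrmk_{p,p}(\delta_{E,y_0}^\bullet\otimes\sigma,\mathfrak{m})^p\le\scrmk_{p,q}(\delta_{E,y_0}^\bullet\otimes\sigma,\mathfrak{m})^p<\infty$.
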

\begin{proof}
Let $(\mu_\ell)_{\ell\in \N}\subset \mathcal{P}_p(\Omega)$ and suppose $\gamma_\ell$ is a $p$-optimal coupling between $\mu_\ell$ and $\sigma$, note that $\gamma\in \mathcal{P}^\sigma(E)$. In the calculations below we will consider each $\gamma_\ell^\bullet$ as a measure on $\Omega$. Since $p\leq q<\infty$, for some $\omega_0\in \Omega$ we can calculate using Jensen's inequality that
\begin{align*}
    \scrmk_{p, q}(\delta^\bullet_{E, \omega_0}\otimes \sigma, \gamma_\ell)
    &=\lVert \mk^\Omega_p(\delta^\Omega_{\omega_0}, \gamma_\ell^\bullet)\rVert_{L^q(\sigma)}
    =\left\lVert \left(\int_\Omega \dist_\Omega(\omega_0, \omega)^pd\gamma_\ell^\bullet(\omega)\right)^{\frac{1}{p}}\right\rVert_{L^q(\sigma)}\\
    &\leq \left(\int_\Omega\int_\Omega \dist_\Omega(\omega_0, \omega)^pd\gamma_\ell^{\omega'}(\omega)d\sigma(\omega')\right)^{\frac{1}{p}}
    =\left(\int_{\Omega^2} \dist_\Omega(\omega_0, \omega)^pd\gamma_\ell(\omega', \omega)\right)^{\frac{1}{p}}\\
    &=\left(\int_{\Omega} \dist_\Omega(\omega_0, \omega)^pd\mu_\ell(\omega)\right)^{\frac{1}{p}}\\&<\infty.
\end{align*}
Taking $q\to\infty$ also yields that $\scrmk_{p, \infty}(\delta^\bullet_{E, \omega_0}\otimes \sigma, \gamma_\ell)<\infty$. 
Now suppose $(\gamma_\ell)_{\ell\in \mathbb{N}}$ converges in $\scrmk_{p, q}$ to some $\gamma\in \mathcal{P}^\sigma_{p, q}(\Omega\times\Omega)$. Again since $p\leq q$, by H\"older's inequality,
\begin{align*}
    \int_{\Omega^2} \dist_\Omega(\omega', \omega)^pd\gamma_\ell(\omega', \omega)
    &\leq 2^{p-1}\left(\int_\Omega \dist_\Omega(\omega_0, \omega')^pd\sigma(\omega')+\int_\Omega\int_\Omega \dist_\Omega(\omega_0, \omega)^pd\gamma_\ell^{\omega'}(\omega)d\sigma(\omega')\right)\\
    &\leq 2^{p-1}\left(\int_\Omega \dist_\Omega(\omega_0, \omega')^pd\sigma(\omega')+\left\lVert\int_\Omega \dist_\Omega(\omega_0, \omega)^pd\gamma_\ell^{\bullet}(\omega)\right\rVert_{L^{q/p}(\sigma)}\right)\\
    &=2^{p-1}\left(\int_\Omega \dist_\Omega(\omega_0, \omega')^pd\sigma(\omega')+\scrmk_{p, q}(\delta^\bullet_{E, \omega_0}\otimes \sigma, \gamma_\ell)^p\right)
\end{align*}
which is bounded uniformly in $\ell$. By Proposition~\ref{prop: disint conv implies weak} the sequence converges weakly, hence by~\cite{Villani09}*{Theorem 5.20} we see $\gamma\in\Pi_{\mathrm{opt}}(\sigma)$ as well.
\end{proof}
Finally, we note there is also a relationship between the sliced Monge--Kantorovich metrics which we defined in our previous work~\cite{KitagawaTakatsu24a}, and our disintegrated Monge--Kantorovich metrics.
\begin{definition}[\cite{KitagawaTakatsu24a}*{Definition 1.1}]
    For $n\in \N$, let $\sigma_{n-1}$ be the standard Riemannian volume measure on $\S^{n-1}$, normalized to have unit mass, and for $\omega\in \S^{n-1}$ define the map $R^\omega: \R^n\to \R$ by $R^\omega(x)\coloneqq \langle x,\omega\rangle$. Then for $1\leq p<\infty$, $1\leq q\leq \infty$, and $\mu$, $\nu\in \mathcal{P}_p(\R^n)$, the \emph{sliced $(p, q)$-Monge--Kantorovich metric} is defined by
    \begin{align*}
        \mk_{p, q}(\mu, \nu)\coloneqq \left\| \mk_p^\R(R^\bullet_\sharp \mu, R^\bullet_\sharp \nu)\right\|_{L^q(\sigma_{n-1})}.
    \end{align*}
\end{definition}
Recall these include the well-known \emph{sliced Wasserstein} ($p=q$) and \emph{max-sliced Wasserstein} ($q=\infty$) metrics. As shown in~\cite{KitagawaTakatsu24a}*{Main Theorem}, each $(\mathcal{P}_p(\R^n), \mk_{p, q})$ is a complete, separable metric space, but is \emph{not} geodesic (when $p>1$). The relationship between the sliced and disintegrated Monge-Kantorovich metrics is as follows.
\begin{proposition}\label{thm: disint embed} 
Let $n\in\mathbb{N}$.
If $(E, \Omega, \pi, Y)$ is taken to be the trivial bundle $E=\S^{n-1}\times \R$, 
then there exists an isometric embedding of 
$(\mathcal{P}_p(\mathbb{R}^n), \mk_{p,q})$ into $(\mathcal{P}^{\sigma_{n-1}}_{p,q}(E), \scrmk[\sigma_{n-1}]_{p, q})$ 
defined by sending $\mu \in \mathcal{P}_p(\mathbb{R}^n)$ to the element of the form 
$\rad^\bullet_\sharp\mu\otimes \sigma_{n-1}$.
\end{proposition}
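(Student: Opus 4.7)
The plan is straightforward: since $E = \S^{n-1} \times \R$ is a trivial bundle with a single chart, the fiber map $\Xi_\omega: \R \to \pi^{-1}(\{\omega\})$ reduces to the isometry $t \mapsto (\omega,t)$, and the notation $\rad^\bullet_\sharp \mu \otimes \sigma_{n-1}$ stands for the measure on $E$ whose disintegration with respect to $\pi$ sends $\omega$ to $(\Xi_\omega)_\sharp \rad^\omega_\sharp \mu$. First I would verify that this defines an element of $\mathcal{P}^{\sigma_{n-1}}_{p,q}(E)$. Measurability of $\omega \mapsto \rad^\omega_\sharp \mu$ follows from continuity of $\omega \mapsto \rad^\omega(x) = \langle \omega, x\rangle$ together with the uniform linear-growth bound $|\rad^\omega(x)| \leq |x|$ and Theorem~\ref{thm: wassconv}, and Lemma~\ref{lem: pushforward measurable} (applied with the trivial one-element partition of unity) then confirms the construction yields a Borel probability measure on $E$ lying in $\mathcal{P}^{\sigma_{n-1}}_p(E)$. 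To confirm the finiteness condition from \eqref{scr}, fixing $y_0\in \R$ I would bound
\[
\mk_p^E\bigl(\delta^\omega_{E, y_0}, (\Xi_\omega)_\sharp \rad^\omega_\sharp \mu\bigr)
= \mk_p^\R(\delta^\R_{y_0}, \rad^\omega_\sharp \mu)
\leq \left(\int_{\R^n} (|y_0| + |x|)^p\, d\mu(x)\right)^{1/p},
\]
which is finite and independent of $\omega$ since $\mu \in \mathcal{P}_p(\R^n)$.

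Next I would establish the isometry. Given $\mu, \nu \in \mathcal{P}_p(\R^n)$ with images $\mathfrak{m}, \mathfrak{n}$ under the proposed embedding, since each $\Xi_\omega$ is an isometry,
\[
\mk_p^E(\mathfrak{m}^\omega, \mathfrak{n}^\omega)
= \mk_p^E\bigl((\Xi_\omega)_\sharp \rad^\omega_\sharp \mu, (\Xi_\omega)_\sharp \rad^\omega_\sharp \nu\bigr)
= \mk_p^\R(\rad^\omega_\sharp \mu, \rad^\omega_\sharp \nu)
\]
for $\sigma_{n-1}$-a.e. $\omega$. Taking the $L^q(\sigma_{n-1})$ norm of both sides then yields $\scrmk[\sigma_{n-1}]_{p,q}(\mathfrak{m}, \mathfrak{n}) = \mk_{p,q}(\mu, \nu)$ directly from the two definitions. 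Injectivity of the embedding is then automatic from the isometric identity, as $\mk_{p,q}$ separates points on $\mathcal{P}_p(\R^n)$ by \cite{KitagawaTakatsu24a}*{Main Theorem}.

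No substantial obstacle is expected; the proposition is essentially a definitional consequence of the trivial bundle structure, the only genuinely nontrivial input being the measurability of $\omega \mapsto \rad^\omega_\sharp \mu$, which fits directly into the framework of Lemma~\ref{lem: pushforward measurable}.
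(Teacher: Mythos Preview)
Your proposal is correct and follows essentially the same route as the paper: establish that $\omega\mapsto R^\omega_\sharp\mu$ is (continuously, hence) measurably valued, check the $L^q$ moment bound, and read off the isometry from the definitions. One small wrinkle: Lemma~\ref{lem: pushforward measurable} as stated treats a \emph{fixed} fiber measure $\mu\in\mathcal{P}_p(Y)$ pushed through the trivializations, whereas here the fiber measure $R^\omega_\sharp\mu$ genuinely depends on $\omega$; the correct reference for turning your continuity observation into a well-defined element of $\mathcal{P}^{\sigma_{n-1}}(E)$ is Remark~\ref{V58} (and the paper itself instead invokes the Riesz representation \cite{Bogachev07}*{Theorem~7.11.3} after showing $\omega\mapsto\int\phi(\omega,t)\,dR^\omega_\sharp\mu(t)$ is continuous by dominated convergence), but your underlying argument is sound.
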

\begin{proof}
Let $\mu \in \mathcal{P}(\mathbb{R}^n)$.
For 
$\phi\in C_b(\S^{n-1}\times\R)$,
by dominated convergence the function on $\mathbb{S}^{n-1}$ defined by 
\[
\omega \mapsto 
\int_{\mathbb{R}} \phi(\omega, t)dR^\omega_\sharp\mu(t)
=
\int_{\mathbb{R}^n} \phi(\omega,\langle x,\omega\rangle)d\mu(x)
\]
is continuous, and
\begin{align*}
\mathcal{L}_\mu(\phi)
\coloneqq\int_{\mathbb{S}^{n-1}}\int_{\mathbb{R}} \phi(\omega, t)dR^\omega_\sharp\mu(t)d\sigma_{n-1}(\omega)
=\int_{\mathbb{S}^{n-1}}\int_{\mathbb{R}^n} 
\phi(\omega, \langle x,\omega\rangle)d\mu(x)d\sigma_{n-1}(\omega)
\end{align*}
is well-defined.
Since $\S^{n-1}\times\R$ is locally compact, by~\cite{Bogachev07}*{Theorem~7.11.3} we can identify $\mathcal{L}_\mu$ with a Borel probability measure $\mathfrak{m}_\mu\in \mathcal{P}^{\sigma_{n-1}}(\S^{n-1}\times\R)$ and $\mathfrak{m}_\mu^\bullet=R^\bullet_\sharp\mu$.

Noting that for the choice $y_0=0$ in $\R$, we have $\delta^\omega_{E, y_0}=\delta^\R_0$ for all $\omega\in \S^{n-1}$, for $\mu\in\mathcal{P}_p(\mathbb{R}^n)$ a direct calculation combined with \cite{KitagawaTakatsu24a}*{Lemma 2.3} gives
\begin{align*}
    \left\| \mk_p^\R(\delta_0^\R, \rad^\bullet_\sharp \mu)\right\|_{L^q(\sigma_{n-1})}
    &=\mk_{p,q}(\delta_0^{\mathbb{R}^n}, \mu)
    \leq M_{\max\{p, q\}, n}\mk_p^{\R^n}(\delta_0^{\R^n}, \mu)<\infty,
\end{align*}
hence 
$\mathfrak{m}_\mu\in \mathcal{P}^{\sigma_{n-1}}_{p, q}(\S^{n-1}\times\R)$.
Finally, for $\mu$, $\nu\in \mathcal{P}_p(\mathbb{R}^n)$,
we have
\begin{align*}
 \scrmk_{p,q}(\mathfrak{m}_\mu, \mathfrak{m}_\nu)
 &=\left\| \mk_p^{\R}(\mathfrak{m}_\mu^\bullet, \mathfrak{m}_\nu^\bullet)\right\|_{L^q(\sigma_{n-1})}
 =\left\| \mk_p^{\R}(R^\bullet_\sharp \mu, R^\bullet_\sharp \nu)\right\|_{L^q(\sigma_{n-1})}
 =\mk_{p,q}(\mu, \nu),
\end{align*}
showing that the map $\mu\mapsto \mathfrak{m}_\mu$ is an isometry.
\end{proof}
\begin{remark}
    By the completeness from \cite{KitagawaTakatsu24a}*{Main Theorem}, the image of $(\mathcal{P}_p(\mathbb{R}^n), \mk_{p,q})$ under $\mu\mapsto \mathfrak{m}_\mu$ is closed in $(\mathcal{P}^{\sigma_{n-1}}_{p, q}(\S^{n-1}\times\R), \scrmk[\sigma_{n-1}]_{p,q})$. However, also by \cite{KitagawaTakatsu24a}*{Main Theorem} the embedded image is \emph{not} geodesically convex in $(\mathcal{P}^{\sigma_{n-1}}_{p, q}(\S^{n-1}\times\R), \scrmk[\sigma_{n-1}]_{p,q})$ when $n\geq 2$ and $p>1$.

This shows that $(\mathcal{P}_p(\mathbb{R}^n), \mk_{p,q})$ can be viewed as a sort of ``submanifold'' embedded into the geodesic space $(\mathcal{P}^{\sigma_{n-1}}_{p,q}(\S^{n-1}\times\R), \scrmk[\sigma_{n-1}]_{p, q})$, but $\mk_{p,q}$ is in actuality utilizing the ambient metric from the larger space rather than the intrinsic metric generated from itself. In fact, it is proved in~\cite{Tilh}*{Lemma~2.6 and Lemma 2.8} that the intrinsic metric on $\mathcal{P}_p(\mathbb{R}^n)$ induced by $\mk_{p,p}$ between discrete measures with compact supports is $\mk_p^{\mathbb{R}^n}$.
\end{remark}
\begin{remark}
Recall that
$\mathcal{P}_2(\mathbb{R}^n)$ can be viewed as the quotient space of $L^2([0, 1]; \mathbb{R}^n)$ 
under the equivalence relation $\sim$, where $S\sim T$ if and only if 
$T_\sharp \mathcal{H}^1|_{[0,1]}=S_\sharp \mathcal{H}^1|_{[0,1]}$.
In particular, if $p=2$, 
the map from $L^2([0,1];\mathbb{R}^n)$ to $(\mathcal{P}_2(\R^n),\mk_2^{\mathbb{R}^n})$ sending 
$T$ to $T_\sharp \mathcal{H}^1|_{[0,1]}$ formally becomes a ``Riemannian submersion"
(for instance, see~\cite{Otto01}*{Section~4}).
This Riemannian interpolation is recovered for a complete, separable, geodesic space 
by the use of absolutely continuous curves (\cite{AmbrosioGigliSavare08}*{Chapter~8}, for instance).
This enables one to discuss the notion of differentiability on~$(\mathcal{P}_2(\R^n),\mk_2^{\R^n})$,
see also~\cite{GangboTudorascu19} for various notions of differentiability. It may be possible to apply such an approach to the spaces $(\mathcal{P}^\sigma_{p, q}(E), \scrmk_{p, q})$ in certain settings, which is left for a future work.
\end{remark}

\begin{ack} 
The authors would like to thank Guillaume Carlier, Wilfrid Gangbo, Quentin M{\'e}rigot, and Brendan Pass for fruitful discussions.
JK was supported in part by National Science Foundation grant DMS-2246606.
AT was supported in part by JSPS KAKENHI Grant Numbers 19K03494, 24K21513.
\end{ack}

\medskip

\bibliography{Wpq.bib}
\bibliographystyle{alpha}
\end{document}